\theoremstyle{plain}
\newtheorem{theorem}{Theorem}[section]
\newtheorem{proposition}[theorem]{Proposition}
\newtheorem{lemma}[theorem]{Lemma}
\newtheorem{corollary}[theorem]{Corollary}
\theoremstyle{definition}
\newtheorem{remark}[theorem]{Remark}
\newcommand{\f}{\varphi}
\newcommand{\CC}{\mathbb C}
\newcommand{\PP}{\mathbb P}
\newcommand{\ZZ}{\mathbb Z}
\newcommand{\MM}{\mathbf M}
\newcommand{\A}{{\mathcal A}}
\newcommand{\E}{{\mathcal E}}
\newcommand{\F}{{\mathcal F}}
\newcommand{\G}{{\mathcal G}}
\newcommand{\I}{{\mathcal I}}
\newcommand{\J}{{\mathcal J}}
\def\L{\mathcal L}
\def\O{\mathcal O}
\newcommand{\Ker}{{\mathcal Ker}}
\newcommand{\Coker}{{\mathcal Coker}}
\newcommand{\Image}{{\mathcal Im}}
\newcommand{\Aut}{\operatorname{Aut}}
\newcommand{\Ext}{\operatorname{Ext}}
\newcommand{\Hilb}{\operatorname{Hilb}}
\newcommand{\Hom}{\operatorname{Hom}}
\def\H{\operatorname{H}}
\newcommand{\M}{\operatorname{M}}
\newcommand{\Poly}{\operatorname{P}}
\newcommand{\p}{\operatorname{p}}
\newcommand{\rank}{\operatorname{rank}}
\newcommand{\Sym}{\operatorname{S}}
\newcommand{\dual}{{\scriptscriptstyle \operatorname{D}}}
\newcommand{\trans}{{\scriptscriptstyle \operatorname{T}}}
\newcommand{\tensor}{\otimes}
\newcommand{\lra}{\longrightarrow}
\begin{document}

\title[Moduli of sheaves supported on curves of genus $2$ in $\PP^1 \times \PP^1$]
{Moduli of sheaves supported on curves of genus two in a quadric surface}

\author{Mario Maican}
\address{Institute of Mathematics of the Romanian Academy, Calea Grivitei 21, Bucharest 010702, Romania}

\email{maican@imar.ro}

\begin{abstract}
We study the moduli space of stable sheaves of Euler characteristic $1$,
supported on curves of arithmetic genus $2$ contained in a smooth quadric surface.
We show that this moduli space is rational.
We give a classification of the stable sheaves involving locally free resolutions or extensions.
We compute the Betti numbers by studying the variation of the moduli spaces of $\alpha$-semi-stable pairs.
\end{abstract}

\subjclass[2010]{Primary 14D20, 14D22}
\keywords{Moduli of sheaves, Semi-stable sheaves}

\maketitle

\section{Introduction}
\label{introduction}

Let $\PP^1$ be the complex projective line and let $\F$ be a coherent algebraic sheaf on $\PP^1 \times \PP^1$
with support of dimension $1$.
We fix the polarization $\O_{\PP^1}(1) \tensor \O_{\PP^1}(1)$ on $\PP^1 \times \PP^1$.
According to \cite[Proposition 2]{ballico_huh}, there are $r, s, t \in \ZZ$ such that for any $m, n \in \ZZ$
the Euler characteristic of the twisted sheaf $\F(m, n)$ satisfies $\chi(\F(m, n)) = rm + sn + t$.
The linear polynomial $P_{\F}(m, n) = rm + sn + t$ is called the \emph{Hilbert polynomial} of $\F$
and the ratio $\p(\F) = t / (r+s)$ is called the \emph{slope} of $\F$ with respect to the fixed polarization.
We recall that $\F$ is \emph{semi-stable} (respectively \emph{stable}) with respect to the above polarization
if it does not contain subsheaves with support of dimension zero
and for any proper subsheaf $\E \subset \F$ we have $\p(\E) \le \p(\F)$ (respectively $\p(\E) < \p(\F)$).
According to \cite{simpson}, for a given polynomial $P$,
there is a coarse moduli space, denoted $\M(P)$, that is a projective variety,
and that parametrizes S-equivalence classes of semi-stable sheaves on $\PP^1 \times \PP^1$ with Hilbert polynomial $P$.
Its dimension, as computed in \cite[Proposition 2.3]{lepotier}, is $2rs + 1$.
By the argument at \cite[Theorem 3.1]{lepotier} $\M(P)$ is irreducible
and by \cite[Proposition 2.3]{lepotier} it is smooth at the points given by stable sheaves.

The first non-trivial examples of such moduli spaces are $\M(2m+2n+1)$
and $\M(2m+2n+2)$. They were studied in \cite{ballico_huh} which contains
a classification of the semi-stable sheaves by means of locally free resolutions.
The rationality of $\M(2m+2n+2)$ was proved in \cite{chung_moon} by the wall-crossing method
and in \cite{osaka} by an elementary method.

The object of this paper is the study of $\MM = \M(3m+2n+1)$.
The points of $\MM$ are stable sheaves $\F$ supported on curves of bidegree $(2, 3)$ contained in $\PP^1 \times \PP^1$, with $\chi(\F)=1$.
As noted above, $\MM$ is a smooth projective variety of dimension $13$.
Twisting by powers of the polarization provides isomorphisms $\MM \simeq \M(3m + 2n + 5t)$
for any $t \in \ZZ$.

For $i, j \in \ZZ$ we use the abbreviation $\O(i, j) = \O_{\PP^1 \times \PP^1}(i, j)$.
We fix vector spaces $V_1$ and $V_2$ over $\CC$ of dimension $2$ and we make the identifications
\[
\PP^1 \times \PP^1 = \PP(V_1) \times \PP(V_2), \qquad
\H^0(\O(i, j)) = \operatorname{S}^i V_1^* \tensor \operatorname{S}^j V_2^*.
\]
We fix a basis $\{ x, y \}$ of $V_1^*$ and a basis $\{ z, w \}$ of $V_2^*$.
For a sheaf $\F$ we denote by $[\F]$ its S-equivalence class. If $\F$ is stable, then $[\F]$ is its isomorphism class.

\begin{theorem}
\label{main_theorem}
The variety $\MM$ is rational.
We have a decomposition of $\MM$ into an open subvariety $\MM_0$,
a closed smooth irreducible subvariety $\MM_1$ of codimension $1$,
and a closed subvariety $\MM_2 \cup \MM_3$ having two smooth irreducible components $\MM_2$, $\MM_3$
of codimension $2$, respectively, $3$. The subvarieties are defined as follows: $\MM_0 \subset \MM$ is the subset
of sheaves $\F$ having a resolution of the form
\[
0 \lra 2 \O(-1, -2) \stackrel{\f}{\lra} \O(0, -1) \oplus \O \lra \F \lra 0
\]
where $\f_{11}$ and $\f_{12}$ define a subscheme of length $2$ of $\PP^1 \times \PP^1$;
$\MM_1 \subset \MM$ is the subset of sheaves $\F$ having a resolution of the form
\[
0 \lra \O(-2, -1) \oplus \O(-1, -3) \stackrel{\f}{\lra} \O(-1, -1) \oplus \O \lra \F \lra 0
\]
where $\f_{11} \neq 0$, $\f_{12} \neq 0$;
$\MM_2$ is the set of twisted structure sheaves $\O_C(0, 1)$ for a curve $C \subset \PP^1 \times \PP^1$ of bidegree $(2, 3)$;
$\MM_3$ is the set of non-split extensions of $\O_L$ by $\O_Q$ for a line $L \subset \PP^1 \times \PP^1$ of bidegree $(0, 1)$
and a quartic $Q \subset \PP^1 \times \PP^1$ of bidegree $(2, 2)$.

The subvariety $\MM_1$ is isomorphic to a $\PP^9$-bundle over $\PP^1 \times \PP^2$ and is the Brill-Noether locus of sheaves
$\F$ satisfying $\H^0(\F(-1,1)) \neq 0$ (for $\F \in \MM_1$ we have $\H^0(\F(-1, 1)) \simeq \CC$);
$\MM_2$ is isomorphic to $\PP^{11}$ and is the Brill-Noether locus of sheaves $\F$ satisfying $\H^1(\F) \neq 0$
(for $\F \in \MM_2$ we have $\H^1(\F) \simeq \CC$);
$\MM_3$ is isomorphic to a $\PP^1$-bundle over $\PP^8 \times \PP^1$.
\end{theorem}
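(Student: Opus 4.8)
The plan is to stratify $\MM$ by the cohomology of small twists and to produce on each stratum either a two-term locally free resolution or a presentation as an extension, from which the geometry can be read off directly. I would first record the numerical data: a sheaf $\F \in \MM$ is pure of dimension $1$ with $\chi(\F) = 1$, slope $\p(\F) = 1/5$, and support a curve of bidegree $(2,3)$, so $\chi(\O_C) = -1$. Writing $\F^{\dual} = \mathcal{E}xt^1(\F, \O(-2,-2))$ for the dual pure sheaf, Serre duality on $\PP^1 \times \PP^1$ gives isomorphisms $\H^i(\F(a,b))^* \simeq \H^{1-i}(\F^{\dual}(-a,-b))$, and $\F^{\dual}$ is again a point of a moduli space of the same numerical type. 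Comparing slopes of sub- and quotient sheaves against the Hilbert polynomial, I would show that stability forces the cohomology of the relevant twists to follow a fixed generic pattern, the only admissible deviations being governed by the two Brill-Noether conditions $\H^0(\F(-1,1)) \neq 0$ and $\H^1(\F) \neq 0$ (which are interchanged, up to a twist, by the duality above). These conditions and their failure will carve out the four pieces.

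Next I would extract the resolutions using the Beilinson spectral sequence on $\PP^1 \times \PP^1$ attached to the exceptional collection $\O(-1,-1), \O(-1,0), \O(0,-1), \O$, i.e. to the Koszul resolution of the diagonal built from the two rulings. After twisting $\F$ into the appropriate window, the $E_1$-page carries the groups $\H^q(\F \tensor -)$ tensored with line bundles from the collection; the generic vanishing established above collapses the monad to a two-term complex of sums of line bundles, whose summands are dictated by the surviving cohomology. Running this in each cohomological stratum, I expect exactly the four presentations of the statement to emerge: the resolution $0 \to 2\O(-1,-2) \to \O(0,-1) \oplus \O \to \F \to 0$ when both Brill-Noether conditions fail; the resolution with source $\O(-2,-1) \oplus \O(-1,-3)$ and target $\O(-1,-1) \oplus \O$ when $\H^0(\F(-1,1)) \neq 0$; the twisted structure sheaves $\O_C(0,1)$, sitting in $0 \to \O(-2,-2) \to \O(0,1) \to \O_C(0,1) \to 0$ and having $\H^1 \simeq \CC$, forming $\MM_2$; and the non-split extensions $0 \to \O_Q \to \F \to \O_L \to 0$ forming $\MM_3$. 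For each presentation a Hilbert-polynomial check returns $3m + 2n + 1$, and the destabilizing analysis isolates which ones yield stable $\F$.

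I would then identify the geometry of the three lower strata. The assignment $C \mapsto \O_C(0,1)$ identifies $\MM_2$ with the complete linear system $\PP(\H^0(\O(2,3))) = \PP(\Sym^2 V_1^* \tensor \Sym^3 V_2^*) \cong \PP^{11}$, and the sequence above computes $\H^1(\F) \simeq \CC$. For $\MM_3$, fixing $L \in \PP(\Sym^1 V_2^*) \cong \PP^1$ and $Q \in \PP(\Sym^2 V_1^* \tensor \Sym^2 V_2^*) \cong \PP^8$, a computation of $\Ext^1(\O_L, \O_Q) \simeq \CC^2$ presents $\MM_3$ as a $\PP^1$-bundle over $\PP^8 \times \PP^1$, of dimension $10$. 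For $\MM_1$, the two essential entries of $\f$ lie in $\H^0(\O(1,0)) \cong V_1^*$ and $\H^0(\O(0,2)) \cong \Sym^2 V_2^*$, producing a base $\PP^1 \times \PP^2$, while the remaining entries (the maps into $\O$), taken modulo the residual automorphisms of the resolution, form the $\PP^9$-fibre; being a projective bundle, $\MM_1$ is smooth and irreducible of dimension $12$, i.e. of codimension $1$. The one-dimensionality of the relevant Brill-Noether spaces falls out of these same resolutions.

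Finally I would treat rationality and the gluing. The open stratum $\MM_0$ is the geometric quotient of the space $W$ of morphisms $\f \colon 2\O(-1,-2) \to \O(0,-1) \oplus \O$ whose first row $(\f_{11}, \f_{12})$ defines a length-$2$ subscheme, by the group $G = \bigl(\Aut(2\O(-1,-2)) \times \Aut(\O(0,-1) \oplus \O)\bigr)/\CC^*$; a parameter count gives $\dim W - \dim G = 20 - 7 = 13 = \dim \MM$. To prove rationality I would use the $GL_2$-action on the source together with the torus and the unipotent part $\Hom(\O(0,-1), \O) \cong \CC^2$ of the target automorphisms to put the first row in a normal form, then check that the residual group acts on the remaining entries with a rational quotient, exhibiting an explicit rational section; rationality of $\MM$ then follows from that of the dense open $\MM_0$. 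I expect the two genuinely hard points to be, first, proving that the four presentations are exhaustive and that the cohomological conditions match the strata exactly --- a finite but delicate walk through the Beilinson monad, controlling every possible jump and discarding the non-pure or non-stable outputs --- and, second, the rationality of $\MM_0$, since $G$ is non-reductive and GIT cannot be invoked directly, so the quotient must be handled by an explicit slice. The codimension and closure statements (that $\MM_1$ is a divisor, that $\MM_2 \cup \MM_3$ is closed of codimensions $2$ and $3$, and that all strata are smooth) I would verify by degenerating the generic resolution and by the bundle descriptions just obtained.
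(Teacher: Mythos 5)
Your overall route --- the Beilinson-type (Buchdahl) spectral sequence for the collection $\O(-1,-1)$, $\O(-1,0)$, $\O(0,-1)$, $\O$, vanishing forced by stability, a case analysis producing the four presentations, quotient/bundle descriptions of the strata, and rationality read off the open stratum --- is essentially the paper's, and your dimension counts ($20-7=13$ for $\MM_0$, the $\PP^9$-bundle over $\PP^1\times\PP^2$ for $\MM_1$, the $\PP^1$-bundle over $\PP^8\times\PP^1$ for $\MM_3$) all check out; even your worry about the non-reductive quotient for rationality is unnecessary, since over the locus where $Z$ is two distinct points the paper simply exhibits a $\PP^9$-bundle over an open subset of the symmetric square of $\PP^1\times\PP^1$, which is your pencil-of-$(1,1)$-curves picture in different clothes. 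But one organizing claim in your plan is wrong: the two Brill--Noether conditions do \emph{not} carve out the four pieces. A sheaf $\F \in \MM_3 \setminus \MM_2$ has exactly the generic cohomology table: from $0 \to \O_Q \to \F \to \O_L \to 0$ one computes $\H^0(\F) \simeq \CC$, $\H^1(\F) = 0$, $\H^0(\F(0,-1)) = 0$ (otherwise $\F \simeq \O_C(0,1)$ by Proposition \ref{vanishing_M}), and $\H^0(\F(-1,1)) = 0$ since $\H^0(\O_Q(-1,1)) = 0$ and $\O_L(-1,1) \simeq \O_{\PP^1}(-1)$. So no stratification by cohomology jumps of twists separates $\MM_3$ from $\MM_0$; in the paper the separation happens inside the spectral sequence itself, according to whether the maximal minors of the $E_1$-differential $\f_1 \colon 4\O(-1,-1) \to \O(0,-1) \oplus 2\O(-1,0)$ acquire a common factor, which is what switches $\Coker(\f_1)$ from $\O_Z$ of length $2$ to $\O_L$ (Lemma \ref{generic_sheaves} versus Proposition \ref{sheaves_M_3}). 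Relatedly, $\MM_2$ and $\MM_3$ are not disjoint ($\MM_2 \cap \MM_3 \simeq \PP^8 \times \PP^1$, the sheaves $\O_{Q \cup L}(0,1)$), so ``conditions and their failure'' cannot produce them as separate strata in any case.

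The second genuine gap is the hope that generic vanishing ``collapses the monad to a two-term complex of sums of line bundles''. The spectral sequence yields only the four-term sequence (\ref{generic_convergence}), that is, a presentation of $\F$ as a torsion-free extension $0 \to \O_C \to \F \to \O_Z \to 0$; the bundles $2\O(-1,-2)$ and $\O(0,-1)\oplus\O$ of resolution (\ref{M_0}) are not among the $E_1$-terms, and no formal splicing of the Koszul resolutions of $\O_C$ and $\O_Z$ produces them. The bridge in the paper is that a torsion-free extension of $\O_Z$ by $\O_C$ is \emph{unique} up to isomorphism for fixed $(C,Z)$ (Lemmas \ref{unique_extension} and \ref{general_extensions}), so that once every admissible pair $(C,Z)$ is realized as the cokernel of an explicit matrix, those cokernels exhaust the stratum and give (\ref{M_0}) and (\ref{M_1}). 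This uniqueness is the genuinely delicate point: when $Z$ is a double point supported at a singular point $p$ of $C$ one has $\Ext^1(\CC_p, \E) \simeq \CC^2$ and the naive dimension count fails, so the paper needs either the $\upsilon_Z$-argument or Dr\'ezet's criterion (classes $\sigma$ not vanishing on $m_p$, with $\Aut(\O_Z)$ acting transitively on them). Your plan never confronts the possible non-uniqueness of the extension and would stall exactly here. With these two points repaired --- the refined dichotomy on $\f_1$ and the uniqueness lemma --- the rest of your outline (stability checks against the resolutions (\ref{(r,1)}), (\ref{(1,s)}), (\ref{(2,2)}) of potential destabilizers, $\MM_2 \simeq \PP^{11}$, and $\Ext^1(\O_L,\O_Q) \simeq \CC^2$ for $\MM_3$) does match the paper's proof.
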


\noindent
The proof of this theorem is distributed among the statements of Section \ref{classification}.

As an application of our classification of sheaves we compute the Betti numbers of $\MM$.
For a projective variety $X$ we define the Poincar\'e polynomial
\[
\Poly(X)(\xi) = \sum_{i \ge 0} \dim_{\mathbb Q}^{} \H^i(X, {\mathbb Q}) \xi^{i/2}.
\]
The varieties occurring in this paper will have no odd cohomology, so the above will be a genuine polynomial expression.

\begin{theorem}
\label{poincare_polynomial}
The integral homology groups of $\MM$ have no torsion.
The Poincar\'e polynomial of $\MM$ is
\[
\xi^{13} + 3 \xi^{12} + 8 \xi^{11} + 10 \xi^{10} + 11 \xi^9 + 11 \xi^8 + 11 \xi^7 + 11 \xi^6 + 11 \xi^5 + 11 \xi^4 + 10 \xi^3 + 8 \xi^2
+ 3 \xi + 1.
\]
\end{theorem}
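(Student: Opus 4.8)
The plan is to compute the cohomology of $\MM$ not directly but through a family of auxiliary moduli spaces of pairs, following the variation-of-stability (wall-crossing) method. For a parameter $\alpha > 0$ consider the moduli space $\MM^{\alpha}$ of $\alpha$-semi-stable pairs $(\F, s)$, where $\F$ is a sheaf with Hilbert polynomial $3m + 2n + 1$ and $s \in \H^0(\F)$ is a nonzero section (a coherent system in the sense of Le Potier). The first step is to check the standard structural facts in this setting: there are finitely many critical values $0 < \alpha_1 < \dots < \alpha_k$, on each open chamber $\MM^{\alpha}$ is a smooth projective variety independent of $\alpha$, and passing through a wall $\alpha_i$ replaces $\MM^{\alpha}$ by the result of an explicit elementary modification (a flip) whose centers are the loci of pairs that acquire a destabilizing subsystem. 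Since Poincar\'e polynomials are insensitive to such modifications except through the flip centers, knowing $\Poly$ in one chamber together with the wall-crossing contributions determines it in every chamber.

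Next I would analyze the two extreme chambers. For $\alpha$ small, $\alpha = 0^+$, the forgetful morphism $\MM^{0+} \to \MM$, $(\F, s) \mapsto \F$, is the natural bridge to the theorem. Every $\F \in \MM$ has $\chi(\F) = 1$, hence $\H^0(\F) \neq 0$, and by the Brill-Noether description in Theorem \ref{main_theorem} one has $\H^0(\F) \simeq \CC$ away from $\MM_2$ and $\H^0(\F) \simeq \CC^2$ on $\MM_2 \simeq \PP^{11}$ (where $\H^1(\F) \neq 0$). Thus the forgetful map is an isomorphism over $\MM \setminus \MM_2$ and a $\PP^1$-bundle over $\MM_2$, so that $\Poly(\MM)$ is recovered from $\Poly(\MM^{0+})$ by subtracting the contribution $\xi \, \Poly(\PP^{11})$ of the exceptional locus. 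For $\alpha$ large, $\alpha$-stability rigidifies the section and $\MM^{\infty}$ becomes a concrete variety that can be read off from the locally free resolutions of Theorem \ref{main_theorem}: it is an iterated projective (or Grassmannian) bundle over a product of projective spaces, hence its Poincar\'e polynomial is computed directly.

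The core computation is the wall-crossing ledger. For each wall $\alpha_i$ I would determine the destabilizing sub-coherent-systems explicitly, identify the flip centers (which turn out to be projective bundles over products of smaller moduli spaces and of projective spaces), and apply the wall-crossing formula expressing $\Poly(\MM^{\alpha_i^+}) - \Poly(\MM^{\alpha_i^-})$ through the Poincar\'e polynomials of these centers and the ranks of the relevant $\Ext$-groups. Starting from $\Poly(\MM^{\infty})$ and descending across all walls yields $\Poly(\MM^{0+})$, and the final subtraction above produces the stated polynomial. Torsion-freeness and the absence of odd cohomology hold throughout: each extreme chamber space is an iterated projective bundle and therefore has an algebraic cell decomposition, and both the flips across walls and the final $\PP^1$-bundle modification are performed along smooth centers with free, evenly graded homology, a property preserved by the blow-up/down formulas; hence all integral homology groups of $\MM$ are free and concentrated in even degree.

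The main obstacle will be the bookkeeping of the wall-crossings: locating all critical values $\alpha_i$, proving that at each wall the destabilizing subsystems are exactly those one expects, and describing the corresponding flip centers precisely enough to compute their Poincar\'e polynomials. A secondary difficulty is pinning down the geometry of the extreme chamber $\MM^{\infty}$ from the resolutions, and verifying that the forgetful map $\MM^{0+} \to \MM$ behaves exactly as the Brill-Noether stratification predicts, so that the single exceptional contribution over $\MM_2$ is the only correction needed.
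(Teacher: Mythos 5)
Your strategy is the one the paper itself uses in Section \ref{variation} (after Choi and Chung \cite{choi_chung}): compare the pair spaces $\MM^{\infty}$ and $\MM^{0+}$ across walls, then descend to $\MM$ by the forgetful map. Your analysis of the $\alpha = 0^+$ end is exactly Proposition \ref{contraction}: since $\H^0(\F) \simeq \CC$ off $\MM_2$ and $\H^0(\F) \simeq \CC^2$ on $\MM_2 \simeq \PP^{11}$, the morphism $\phi \colon \MM^{0+} \to \MM$ is an isomorphism off $\MM_2$ and has fibers $\PP(\H^0(\F)) \simeq \PP^1$ over it, giving the correction $-\xi \, \Poly(\PP^{11})$. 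But two of your concrete expectations diverge from what actually happens, and one of them would derail the computation if pursued literally. The wall ``ledger'' you defer is in fact the easy part: Proposition \ref{walls} shows there is exactly \emph{one} wall, at $\alpha = 4$, where the destabilizing data is $(\Gamma, \O_Q) \oplus (0, \O_L)$ with $Q$ of bidegree $(2,2)$ and $L$ of bidegree $(0,1)$; the flip centers $F^{\infty}$ and $F^0$ are a $\PP^2$-bundle, respectively a $\PP^1$-bundle, over $\M^{0+}(2m+2n) \times \M(m+1) \simeq \PP^8 \times \PP^1$. Note also that identifying the wall-crossing as a blow-up followed by a blow-down is not a ``standard structural fact'': it requires the smoothness of $\MM^{0+}$ along $F^0$, which the paper establishes by the vanishing $\Ext^2(\Lambda, \Lambda) = 0$ (Lemma \ref{ext^2}) before invoking the universal property of the blow-up.

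The genuine gap is at the $\alpha = \infty$ end. $\MM^{\infty}$ is \emph{not} an iterated projective or Grassmannian bundle over a product of projective spaces, and it cannot be read off from the resolutions of Theorem \ref{main_theorem}, which classify sheaves in $\MM$, not $\infty$-stable pairs. The paper instead invokes \cite[Proposition B.8]{pandharipande_thomas} (Proposition \ref{M^infinity}): $\MM^{\infty}$ is the relative (flag) Hilbert scheme of length-$2$ subschemes of curves of bidegree $(2,3)$, a $\PP^9$-bundle over $\Hilb_{\PP^1 \times \PP^1}(2)$, and the Betti numbers of the base come from G\"ottsche's formula \cite{goettsche}, namely $\xi^4 + 3\xi^3 + 6\xi^2 + 3\xi + 1$; this is not the Poincar\'e polynomial of any product of projective spaces, so a computation along the lines you sketch would output a wrong polynomial. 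Your torsion-freeness argument survives this correction, since $\Hilb_{\PP^1 \times \PP^1}(2)$ still carries an algebraic cell decomposition and blow-ups and blow-downs along such centers preserve free, evenly graded homology --- which is essentially the paper's own (brief) justification.
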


\noindent
The proof of this theorem takes up Section \ref{variation} and is based on the approach of Choi and Chung \cite{choi_chung},
where they study moduli spaces of $\alpha$-semi-stable pairs and their variation when the parameter $\alpha$ changes.
Thus, we show that $\MM$ is obtained from the relative Hilbert scheme of two points on the general curve of bidegree $(2, 3)$
by performing one blowing up followed by two blowing down operations.
The Betti numbers of $\MM$ have already been computed in \cite[Section 9.2]{choi_katz_klemm} in the context of physics.
Our calculation agrees with the one in \cite{choi_katz_klemm}.
The Euler characteristic of $\MM$ is $110$.

In Section \ref{cohomology} we prove that $\H^1(\F) = 0$ for $\F \in \MM \setminus \MM_2$, which is a crucial step
in our classification of sheaves. In Section \ref{preliminaries} we present our main technical tool: a spectral sequence
converging to a coherent sheaf on $\PP^1 \times \PP^1$ reminiscent to the Beilinson spectral sequence
on the projective plane.


\section{Preliminaries}
\label{preliminaries}

According to \cite[Lemma 1]{buchdahl}, for a given coherent sheaf $\F$ on $\PP^1 \times \PP^1$ there is a spectral sequence
converging to $\F$ in degree zero and to $0$ in degrees different from zero.
The sheaves $E_1^{ij}$ on the first level $E_1$ are defined as follows:
\begin{align*}
E_1^{ij} & = 0 \quad \text{for $i > 0$ and $i < -2$,} \\
E_1^{0j} & = \H^j(\F) \tensor \O, \\
E_1^{-2,j} & = \H^j(\F(-1,-1)) \tensor \O(-1, -1).
\end{align*}
The sheaves $E_1^{-1,j}$ fit into exact sequences
\[
\H^j(\F(0, -1)) \tensor \O(0, -1) \lra E_1^{-1, j} \lra \H^j(\F(-1, 0)) \tensor \O(-1, 0).
\]
For a sheaf $\F$ with support of dimension $1$, which will be our case, the relevant part of $E_1$
is represented in the tableau
\begin{equation}
\label{E_1}
\xymatrix
{
\H^1(\F(-1, -1)) \tensor \O(-1, -1) \ar[r]^-{\f_1} & E_1^{-1, 1} \ar[r]^-{\f_2} & \H^1(\F) \tensor \O \\
\H^0(\F(-1, -1)) \tensor \O(-1, -1) \ar[r]^-{\f_3} & E_1^{-1, 0} \ar[r]^-{\f_4} & \H^0(\F) \tensor \O
}
\end{equation}
where the middle sheaves are part of the exact sequences
\begin{equation}
\label{E_1^{-1,0}}
\H^0(\F(0, -1)) \tensor \O(0, -1) \lra E_1^{-1, 0} \lra \H^0(\F(-1, 0)) \tensor \O(-1, 0),
\end{equation}
\begin{equation}
\label{E_1^{-1,1}}
\H^1(\F(0, -1)) \tensor \O(0, -1) \lra E_1^{-1, 1} \lra \H^1(\F(-1, 0)) \tensor \O(-1, 0).
\end{equation}
The relevant part of the second level of the spectral sequence is represented in the tableau
\[
\xymatrix
{
\Ker(\f_1) \ar[rrd]^-{\f_5} & \Ker(\f_2)/\Image(\f_1) & \Coker(\f_2) \\
\Ker(\f_3) & \Ker(\f_4)/\Image(\f_3) & \Coker(\f_4)
}
\]
The spectral sequence degenerates at $E_3 = E_{\infty}$. The convergence of the spectral sequence implies
that $\f_2$ is surjective and that we have the exact sequence
\begin{equation}
\label{convergence}
0 \lra \Ker(\f_1) \stackrel{\f_5}{\lra} \Coker(\f_4) \lra \F \lra \Ker(\f_2)/\Image(\f_1) \lra 0.
\end{equation}

Let $\E$ be a semi-stable sheaf on $\PP^1 \times \PP^1$ with $P_{\E}(m,n) = rm + n + 1$.
According to \cite[Proposition 11]{ballico_huh}, $\E$ has resolution
\begin{equation}
\label{(r,1)}
0 \lra \O(-1, -r) \lra \O \lra \E \lra 0.
\end{equation}
Let $\E$ be a semi-stable sheaf on $\PP^1 \times \PP^1$ with $P_{\E}(m,n) = m + sn + 1$.
Then $\E$ has resolution
\begin{equation}
\label{(1,s)}
0 \lra \O(-s, -1) \lra \O \lra \E \lra 0.
\end{equation}
According to \cite[Proposition 14]{ballico_huh}, a semi-stable sheaf $\E$ on $\PP^1 \times \PP^1$ with Hilbert polynomial
$2m + 2n + 1$ has resolution
\begin{equation}
\label{(2,2)}
0 \lra \O(-2, -1) \oplus \O(-1, -2) \lra \O(-1, -1) \oplus \O \lra \E \lra 0.
\end{equation}

For a sheaf $\F$ of dimension $1$, without zero-dimensional torsion, on $\PP^1 \times \PP^1$ we define the \emph{dual sheaf}
\[
\F^\dual = {\mathcal Ext}^1_{\O_{\PP^1 \times \PP^1}} (\F, \omega_{\PP^1 \times \PP^1}).
\]

\begin{lemma}
\label{duality}
The map $[\F] \mapsto [\F^\dual]$ is well-defined and gives an isomorphism
\[
\M(rm + sn + t) \lra \M(rm + sn - t).
\]
\end{lemma}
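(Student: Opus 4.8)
The plan is to regard $(-)^\dual$ as the duality functor for pure one-dimensional sheaves on the smooth surface $X = \PP^1 \times \PP^1$ and to check, in turn, four things: that it preserves purity and sends the Hilbert polynomial $rm + sn + t$ to $rm + sn - t$; that it preserves semi-stability and stability; that it is compatible with S-equivalence; and that it is algebraic and involutive, hence an isomorphism of varieties. Throughout I use that a semi-stable sheaf in the sense of the definition above has no zero-dimensional subsheaves, so it is pure of dimension one and $\F^\dual$ is indeed defined.

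First I would record the homological input. For a pure one-dimensional sheaf $\F$ one has ${\mathcal Ext}^0(\F, \omega_X) = 0$, since $\F$ is torsion, and ${\mathcal Ext}^2(\F, \omega_X) = 0$, since $\F$ is pure of codimension one; thus $\F^\dual = {\mathcal Ext}^1(\F, \omega_X)$ is the only surviving local $\Ext$-sheaf. The purity of $\F^\dual$ and the biduality isomorphism $(\F^\dual)^\dual \simeq \F$ are standard for pure sheaves of codimension one on a smooth surface. To read off the Hilbert polynomial I would combine the local-to-global $\Ext$ spectral sequence, which degenerates because a single local $\Ext$ is nonzero, with Serre duality on $X$. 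Writing $\F^\dual(m, n) = {\mathcal Ext}^1(\F(-m, -n), \omega_X)$, this gives $\H^p(\F^\dual(m, n)) \simeq \Ext^{p+1}(\F(-m, -n), \omega_X) \simeq \H^{1-p}(\F(-m, -n))^*$, and taking alternating sums yields $\chi(\F^\dual(m, n)) = -\chi(\F(-m, -n))$. Hence $P_{\F^\dual}(m, n) = rm + sn - t$ and $\p(\F^\dual) = -\p(\F)$; in particular the leading coefficients $r, s$, and with them the multiplicity $r + s$, are preserved.

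Next I would establish that duality reverses inclusions of saturated subsheaves while negating slopes. Applying $(-)^\dual$ to a short exact sequence $0 \lra \E \lra \F \lra \G \lra 0$ with $\G$ pure of dimension one, and using the vanishing of ${\mathcal Ext}^0$ and ${\mathcal Ext}^2$, produces an exact sequence $0 \lra \G^\dual \lra \F^\dual \lra \E^\dual \lra 0$. Since it suffices to test (semi-)stability on saturated subsheaves — passing to the saturation keeps the multiplicity and does not decrease the Euler characteristic, hence does not decrease the slope — this sets up an inclusion-reversing, slope-negating correspondence between the saturated subsheaves of $\F$ and those of $\F^\dual$. Because semi-stability of $\F$ is equivalent to the requirement $\p(\G) \ge \p(\F)$ for every quotient $\G$, and $\p(\G^\dual) = -\p(\G)$, $\p(\F^\dual) = -\p(\F)$, these inequalities transform exactly into the semi-stability of $\F^\dual$, with the strict versions giving the stable case. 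Dualizing a Jordan--Hölder filtration, whose stable factors dualize to the stable factors of $\F^\dual$ in reversed order, shows $\operatorname{gr}(\F^\dual) \simeq \operatorname{gr}(\F)^\dual$, so S-equivalent sheaves have S-equivalent duals and $[\F] \mapsto [\F^\dual]$ is well defined on $\M(rm + sn + t)$.

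Biduality then shows that the same construction run in the opposite direction is a two-sided inverse, so the map is a bijection of sets. The hard part will be to upgrade this to an isomorphism of varieties: for this I would perform the duality in families, forming the relative ${\mathcal Ext}^1$ of a flat family of semi-stable sheaves with respect to $\omega_X$. The fibrewise concentration of the local $\Ext$ in a single degree gives compatibility with base change and flatness of the resulting family, so the construction defines a natural transformation of the corresponding moduli functors and hence, through the coarse moduli property, a morphism $\M(rm + sn + t) \lra \M(rm + sn - t)$. Producing the inverse morphism the same way and invoking biduality to see that both compositions are the identity gives the asserted isomorphism.
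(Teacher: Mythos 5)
Your proposal is correct, but it follows a genuinely different route from the paper. The paper's proof is essentially two lines: it considers the Segre embedding $\PP^1 \times \PP^1 \subset \PP^3$, observes the compatibility $\F^\dual \simeq {\mathcal Ext}^{2}_{\O_{\PP^3}}(\F, \omega_{\PP^3})|_{\PP^1 \times \PP^1}$, and then invokes \cite[Theorem 13]{rendiconti}, the author's earlier duality theorem for moduli of one-dimensional sheaves on projective space; the point making this legitimate is that the Segre polarization restricts to $\O(1,1)$, so slopes and (semi-)stability on the quadric agree with those computed in $\PP^3$. What you have done is, in effect, reprove the content of that citation directly on the surface, and every step checks out: the concentration of ${\mathcal Ext}^q(\F, \omega)$ in degree $q = 1$ for pure one-dimensional $\F$ on a smooth surface, the computation $\chi(\F^\dual(m,n)) = -\chi(\F(-m,-n))$ via the degenerate local-to-global spectral sequence plus Serre duality, the slope-negating inclusion-reversing correspondence between saturated subsheaves of $\F$ and of $\F^\dual$ (with the reduction of the semi-stability test to saturated subsheaves correctly justified), the dualization of Jordan--H\"older filtrations for S-equivalence, and the relative ${\mathcal Ext}^1$ construction with base change for algebraicity. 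The only step you compress --- passing from the functorial construction to a morphism of coarse moduli spaces --- is standard: one dualizes the universal quotient over the semistable locus of the Quot scheme and descends the resulting classifying map by group-invariance. As for what each approach buys: the paper gains brevity by outsourcing all homological and family-theoretic work to the earlier theorem, at the cost of needing the Segre compatibility of duals; your argument is self-contained and applies verbatim to one-dimensional sheaves on any smooth projective surface, which is precisely the generalization the paper itself points out in the remark following Lemma \ref{duality}.
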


\begin{proof}
Consider the Segre embedding $\PP^1 \times \PP^1 \subset \PP^3$. Then the dual of $\F$ as a sheaf on $\PP^3$
is compatible with the dual of $\F$ as a sheaf on $\PP^1 \times \PP^1$:
\[
\F^\dual \simeq {\mathcal Ext}^{2}_{\O_{\PP^3}}(\F, \omega_{\PP^3})|_{\PP^1 \times \PP^1}.
\]
This allows us to apply \cite[Theorem 13]{rendiconti} to obtain the conclusion.
\end{proof}

\noindent
In particular, $\MM \simeq \M(3m + 2n - 1)$.
Note that the same argument applies for moduli spaces of one-dimensional sheaves on smooth projective varieties.

\begin{theorem}
\label{M_dual}
We have a decomposition of $\M(3m + 2n - 1)$ into subsets $\MM_0^\dual$, $\MM_1^\dual$,
$\MM_2^\dual \cup \MM_3^\dual$, where $\MM_i^\dual$ is the image of $\MM_i$ under the above isomorphism.
Thus, $\MM_0^\dual$ is the subset of sheaves $\F$ having a resolution of the form
\[
0 \lra \O(-2, -2) \oplus \O(-2, -1) \stackrel{\f}{\lra} 2\O(-1, 0) \lra \F \lra 0,
\]
where $\f_{12}$ and $\f_{22}$ define a zero-dimensional subscheme of $\PP^1 \times \PP^1$;
$\MM_1^\dual$ is the subset of sheaves $\F$ having a resolution of the form
\[
0 \lra \O(-2, -2) \oplus \O(-1, -1) \stackrel{\f}{\lra} \O(-1, 1) \oplus \O(0, -1) \lra \F \lra 0,
\]
where $\f_{12} \neq 0$, $\f_{22} \neq 0$;
$\MM_2^\dual$ is the set of structure sheaves of curves of bidegree $(2, 3)$;
$\MM_3^\dual$ is the set of non-split extensions
of $\O_Q$ by $\O_L(-2, -1)$ with $L$ a line of bidegree $(0, 1)$ and $Q$ a quartic of bidegree $(2, 2)$.
\end{theorem}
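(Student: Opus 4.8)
The strategy is to feed the defining data of the strata $\MM_0, \MM_1, \MM_2, \MM_3$ from Theorem \ref{main_theorem} through the duality functor of Lemma \ref{duality}, namely $\F \mapsto \F^\dual = \mathcal{Ext}^1_{\O_{\PP^1 \times \PP^1}}(\F, \omega)$ with $\omega = \omega_{\PP^1 \times \PP^1} = \O(-2, -2)$. Since the map of Lemma \ref{duality} is an isomorphism and $\MM = \MM_0 \cup \MM_1 \cup \MM_2 \cup \MM_3$ as a set, the images $\MM_i^\dual$ automatically cover $\M(3m+2n-1)$, so it suffices to identify each $\MM_i^\dual$ explicitly. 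Every sheaf in $\MM$ is stable, hence pure of dimension one, so that $\mathcal{Hom}(\F, \omega) = 0$ and $\mathcal{Ext}^2(\F, \omega) = 0$ throughout.

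For $\MM_0^\dual$ and $\MM_1^\dual$ the plan is to dualize the locally free resolutions. If $0 \lra A \stackrel{\f}{\lra} B \lra \F \lra 0$ is such a resolution with $A$, $B$ direct sums of line bundles, then applying $\mathcal{Hom}(-, \omega)$ and using $\mathcal{Hom}(\F, \omega) = 0$ together with $\mathcal{Ext}^1(B, \omega) = 0$ produces
\[
0 \lra \mathcal{Hom}(B, \omega) \stackrel{\f^\trans}{\lra} \mathcal{Hom}(A, \omega) \lra \F^\dual \lra 0.
\]
For $\MM_0$ one has $A = 2\O(-1, -2)$ and $B = \O(0, -1) \oplus \O$, so that $\mathcal{Hom}(B, \omega) = \O(-2, -1) \oplus \O(-2, -2)$ and $\mathcal{Hom}(A, \omega) = 2\O(-1, 0)$, which is exactly the resolution claimed for $\MM_0^\dual$; the identical computation for $\MM_1$ with $A = \O(-2, -1) \oplus \O(-1, -3)$ and $B = \O(-1, -1) \oplus \O$ yields the resolution of $\MM_1^\dual$. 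It then remains to track how transposing $\f$ transforms the open conditions: after transposition and reindexing of the summands, the length-two (respectively nonvanishing) conditions on the designated entries of $\f$ become the stated conditions on $\f_{12}$, $\f_{22}$. This step is routine but must be carried out attentively, since the ordering of the summands in the dual differs from that in the original.

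For $\MM_2^\dual$ and $\MM_3^\dual$ the plan is to use adjunction. For a curve $C \subset \PP^1 \times \PP^1$, which is a Cartier divisor, one has $\omega_C \simeq (\omega \tensor \O(C))|_C$, and for a line bundle $\L$ on $C$, Grothendieck duality along $C \hookrightarrow \PP^1 \times \PP^1$ gives $\L^\dual \simeq \mathcal{Hom}_C(\L, \omega_C)$. Taking $\F = \O_C(0,1)$ with $C$ of bidegree $(2,3)$, one computes $\omega_C = \O(-2,-2) \tensor \O(2,3)|_C = \O_C(0,1)$, whence $\F^\dual = \O_C(0,-1) \tensor \omega_C = \O_C$, identifying $\MM_2^\dual$ with the structure sheaves of curves of bidegree $(2,3)$. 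For $\MM_3$, dualizing the defining extension $0 \lra \O_Q \lra \F \lra \O_L \lra 0$ (which stays short exact because all three terms are pure of dimension one, so the relevant $\mathcal{Hom}$ and $\mathcal{Ext}^2$ terms vanish) gives $0 \lra \O_L^\dual \lra \F^\dual \lra \O_Q^\dual \lra 0$. Here $\O_Q^\dual = \omega_Q = \O(-2,-2) \tensor \O(2,2)|_Q = \O_Q$ and $\O_L^\dual = \omega_L = \O(-2,-2) \tensor \O(0,1)|_L = \O_L(-2,-1)$, so $\F^\dual$ is an extension of $\O_Q$ by $\O_L(-2,-1)$.

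The main obstacle is the bookkeeping together with the preservation of the nondegeneracy and non-splitting conditions. For $\MM_0$ and $\MM_1$ I must verify that transposition sends the genericity locus of $\f$ onto the genericity locus of the dual map, not merely into it; since the functor is an involution, $\F^{\dual\dual} \simeq \F$, this follows once the twists are matched. For $\MM_3$ I must check that a non-split extension dualizes to a non-split one: duality induces an isomorphism $\Ext^1(\O_L, \O_Q) \simeq \Ext^1(\O_Q^\dual, \O_L^\dual) = \Ext^1(\O_Q, \O_L(-2,-1))$ carrying the class of $\F$ to that of $\F^\dual$, so a nonzero class maps to a nonzero class.
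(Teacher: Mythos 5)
Your proposal is correct and is exactly the argument the paper intends (Theorem \ref{M_dual} is stated without proof as the immediate consequence of Lemma \ref{duality} applied to Theorem \ref{main_theorem}): dualize the locally free resolutions via ${\mathcal Hom}(-, \omega)$ using purity to kill ${\mathcal Hom}(\F,\omega)$ and ${\mathcal Ext}^2(\F,\omega)$, compute $\O_C(0,1)^\dual$ and the terms of the $\MM_3$-extension by adjunction with $\omega = \O(-2,-2)$, and use involutivity of $\F \mapsto \F^\dual$ to see the genericity and non-splitting conditions are preserved onto, not merely into, the stated loci. Your bookkeeping of the transposed entries ($\f_{11},\f_{12}$ becoming $\f_{12},\f_{22}$ after reordering the summands) matches the theorem's statement.
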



\section{Vanishing of cohomology}
\label{cohomology}

\noindent
The following lemma is analogous to \cite[Lemma 6.7]{pacific}.
We will use the word \emph{curve} to denote a subscheme defined by a polynomial equation.

\begin{lemma}
\label{pacific}
Let $C \subset \PP^1 \times \PP^1$ be a curve and $\I \subset \O_C$ an ideal sheaf.
Then there is a curve $C' \subset C$ such that the ideal sheaf of $C'$ in $C$, denoted $\I'$, contains $\I$,
and $\I'/\I$ has support of dimension at most $0$.
\end{lemma}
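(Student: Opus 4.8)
The plan is to lift the problem from the (possibly singular, non-reduced) curve $C$ to the ambient smooth surface, where unique factorization is available. Write $C = V(f)$ for a bihomogeneous polynomial $f$, so the ideal sheaf of $C$ in $\PP^1 \times \PP^1$ is $\O(-C) = (f)$. Let $\J \subset \O$ be the preimage of $\I$ under the surjection $\O \to \O_C$; thus $(f) \subset \J \subset \O$, with $\J/(f) = \I$ and $\O/\J \simeq \O_C/\I$. The subscheme $V(\J) \subset \PP^1 \times \PP^1$ is exactly the subscheme of $C$ cut out by $\I$, and the whole point is to separate its one-dimensional part from its zero-dimensional part.

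First I would factor out the divisorial part of $\J$. Since the local rings of $\PP^1 \times \PP^1$ are regular, hence unique factorization domains, the reflexive hull $\J^{\vee\vee}$, where $\J^\vee = {\mathcal H}om_\O(\J, \O)$, is an invertible sheaf; as $\J$ is a nonzero ideal we have $\J \subset \J^{\vee\vee} \subset \O$, so $\J^{\vee\vee} = \O(-C')$ for a unique effective divisor $C'$, and the inclusion $\J \hookrightarrow \O(-C')$ has cokernel supported in dimension $\le 0$. Concretely, $C'$ is cut out locally by the greatest common divisor of the generators of $\J$. Because every effective divisor on $\PP^1 \times \PP^1$ is the zero locus of a single bihomogeneous form, we may write $C' = V(g)$, so $C'$ is a curve in the sense of the statement (with $g$ a nonzero constant, hence $C' = \emptyset$, in the degenerate case $\dim \operatorname{Supp}(\O/\J) \le 0$).

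Next I would check the required containments. From $(f) \subset \J \subset \O(-C')$ we get $(f) \subset (g)$, hence $g \mid f$ and $C' \subset C$; moreover the ideal of $C'$ in $C$ is $\I' = (g)/(f) = \O(-C')/(f)$, with $\O_C/\I' \simeq \O/\O(-C') = \O_{C'}$. The inclusion $\J \subset \O(-C')$ descends to $\I = \J/(f) \subset \O(-C')/(f) = \I'$, giving $\I \subset \I' \subset \O_C$ as required. For the quotient, note $\I'/\I = \O(-C')/\J$; twisting the inclusion $\J \subset \O(-C')$ by $\O(C')$ yields an inclusion of ideals $\J' \subset \O$ with $\J' \simeq \J \otimes \O(C')$, and by construction $\O/\J' \simeq \O(-C')/\J$ has zero-dimensional support. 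Thus $\I'/\I \simeq \O/\J'$ has support of dimension at most $0$.

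I expect the main obstacle to be the factorization step: showing cleanly that the divisorial part of $\J$ is an honest curve $V(g)$ with $g \mid f$ and zero-dimensional residual quotient. This is precisely where local factoriality of $\PP^1 \times \PP^1$ and the fact that its effective divisors are principal in the Cox-ring sense are indispensable. Once that is in place, everything else is formal bookkeeping with the exact sequence $0 \to \I'/\I \to \O_C/\I \to \O_{C'} \to 0$.
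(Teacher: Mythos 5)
Your argument is correct, and it is essentially the expected one: the paper itself gives no proof, deferring to the analogous \cite[Lemma 6.7]{pacific}, whose argument likewise extracts the divisorial part of the preimage ideal $\J \subset \O$ (via the gcd of local generators, i.e., the reflexive hull $\J^{\vee\vee} = \O(-C')$, with zero-dimensional residual quotient) and descends the containments modulo $(f)$. Your adaptation to $\PP^1 \times \PP^1$ is sound, since local factoriality and the fact that every effective divisor is cut out by a single bihomogeneous form are exactly what is needed, and you correctly flag the degenerate case where $C'$ is empty.
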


\noindent
The following proposition is a strengthtening of \cite[Lemma 9]{ballico_huh}.

\begin{proposition}
\label{O_C_stable}
Let $C \subset \PP^1 \times \PP^1$ be a curve of bidegree $(s, r)$. Then $\O_C$ is semi-stable.
If $r > 0$ and $s > 0$, then $\O_C$ is stable.
\end{proposition}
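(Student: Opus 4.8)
The plan is to verify the two defining conditions directly. First I would record the numerics: from the structure sequence
\[
0 \lra \O(-s, -r) \lra \O \lra \O_C \lra 0
\]
one computes $P_{\O_C}(m,n) = rm + sn + (r + s - rs)$, so that $\p(\O_C) = (r + s - rs)/(r + s)$; here $r + s > 0$ because $C$ is a genuine curve. Since $C$ is an effective divisor on the smooth surface $\PP^1 \times \PP^1$, the sheaf $\O_C$ is Cohen--Macaulay of pure dimension $1$, so it has no zero-dimensional subsheaves. This settles the first requirement in the definition, and reduces everything to bounding the slope of an arbitrary proper nonzero subsheaf $\E \subset \O_C$.

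The key structural remark is that any $\O_{\PP^1 \times \PP^1}$-submodule of $\O_C$ is automatically stable under multiplication by $\O_C$, because the $\O_C$-action factors through $\O_{\PP^1 \times \PP^1}$; hence $\E = \I$ is an ideal sheaf of $\O_C$. I would then apply Lemma \ref{pacific} to obtain a subcurve $C' \subset C$ whose ideal sheaf $\I' = \I_{C'/C}$ in $\O_C$ contains $\I$ with $\I'/\I$ of dimension at most $0$. Replacing $\I$ by $\I'$ only raises the slope, $\p(\I) \le \p(\I')$, with equality precisely when $\I = \I'$, so it is enough to bound $\p(\I')$. Writing $C = C' + C''$ as divisors, with $C'$ and $C''$ of bidegrees $(s', r')$ and $(s'', r'')$ and $s' + s'' = s$, $r' + r'' = r$, the sequence
\[
0 \lra \I' \lra \O_C \lra \O_{C'} \lra 0
\]
identifies $\I' \simeq \O_{C''}(-s', -r')$, and twisting the Hilbert polynomial of $\O_{C''}$ gives a closed formula for $\p(\I')$.

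With these formulas, the inequality $\p(\I') \le \p(\O_C)$ reduces, after clearing the positive denominators and substituting $r = r' + r''$, $s = s' + s''$, to the manifestly true $r r' s'' + s s' r'' \ge 0$. This proves semi-stability, the degenerate cases $r = 0$ or $s = 0$ causing no trouble since the inequality is non-strict. For stability under the hypothesis $r, s > 0$, I would analyze equality: if a proper nonzero $\I$ satisfied $\p(\I) = \p(\O_C)$, the chain $\p(\I) \le \p(\I') \le \p(\O_C)$ would force $\I = \I'$ and $r r' s'' + s s' r'' = 0$. Since $\I = \I' \neq \O_C$ we have $C' \neq \emptyset$, i.e. $(s', r') \neq (0,0)$, and since $\I = \I' \neq 0$ we have $C'' \neq \emptyset$, i.e. $(s'', r'') \neq (0,0)$; but then, with $r, s > 0$, the vanishing $r r' s'' + s s' r'' = 0$ is impossible, as a short case check on which of $r', s', r'', s''$ are zero shows. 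Hence every proper nonzero subsheaf has strictly smaller slope.

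I expect the only genuine subtlety to lie in the second step: recognizing the subsheaf as an honest ideal of $\O_C$, invoking Lemma \ref{pacific} to replace it by the ideal of a subcurve, and making the clean identification $\I_{C'/C} \simeq \O_{C''}(-s', -r')$. Once the comparison is rephrased in terms of the complementary bidegrees, the remaining slope inequality is routine, its entire content being the non-negativity of $r r' s'' + s s' r''$ and the elementary equality analysis above.
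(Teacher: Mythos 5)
Your proof is correct and takes essentially the same route as the paper: reduce via Lemma \ref{pacific} to the ideal sheaf of a subcurve, compute Hilbert polynomials, and observe that the slope comparison amounts to $0 \le rr's'' + ss'r''$, which is exactly the paper's inequality $0 \le rr'(s-s') + ss'(r-r')$ rewritten in the complementary bidegrees. Your explicit identification $\I' \simeq \O_{C''}(-s',-r')$, the remark on the absence of zero-dimensional subsheaves, and the careful equality analysis (including the degenerate case $C' = \emptyset$, which the paper handles only implicitly through the torsion length $t$) are harmless refinements of the same argument.
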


\begin{proof}
Let $\I \subset \O_C$ be a proper subsheaf and let $\I'$ and $C'$ be as in Lemma \ref{pacific}.
Let $t$ be the length of $\I'/\I$ and let $(s', r')$ be the bidegree of $C'$. The Hilbert polynomial of $\I$ is given by
\begin{align*}
P_{\I} & = P_{\I'} - t = P_{\O_C} - P_{\O_{C'}} - t \\
& = rm + sn + r + s -rs - r'm - s'n - r' - s' + r's' - t.
\end{align*}
Thus, the slopes of $\I$ and $\O_C$ are given by
\[
\p(\I) = 1 + \frac{r's' - rs - t}{r - r' + s - s'}, \qquad \p(\O_C) = 1 - \frac{rs}{r + s}.
\]
The inequality $\p(\I) \le \p(\O_C)$ follows from the inequality $0 \le rr'(s-s') + ss'(r-r')$.
If $r > 0$ and $s > 0$, then this inequality is strict because either $r' < r$ or $s' < s$.
\end{proof}

\begin{proposition}
\label{vanishing}
Let $\F$ be a semi-stable sheaf on $\PP^1 \times \PP^1$ with Hilbert polynomial $P_{\F}(m, n) = rm + sn + t$.
Let $i$ and $j$ be integers.
\begin{enumerate}
\item[(i)] If ${\displaystyle \max \{ i, j \} < 1 - \frac{rs + t}{r + s}}$, then $\H^0(\F(i, j)) = 0$.
\item[(ii)] If ${\displaystyle \min \{ i, j \} > -1 + \frac{rs - t}{r + s}}$, then $\H^1(\F(i, j)) = 0$.
\end{enumerate}
\end{proposition}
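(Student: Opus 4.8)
The plan is to deduce (ii) from (i) by duality, so I concentrate first on (i). Suppose, for contradiction, that $\H^0(\F(i,j)) \neq 0$ and pick a nonzero section, equivalently a nonzero morphism $\O(-i,-j) \to \F$. Let $\E \subseteq \F$ be its image. Since $\F$ is semi-stable it contains no zero-dimensional subsheaf, hence $\E$ is a pure one-dimensional sheaf, and it is cyclic, being a quotient of the line bundle $\O(-i,-j)$. First I would identify $\E$ explicitly: writing $\E = \O(-i,-j)/\K$, purity of $\E$ forces the rank-one subsheaf $\K$ to be saturated, hence to be the ideal sheaf of an effective divisor twisted by $\O(-i,-j)$. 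Thus $\E \simeq \O_D(-i,-j)$ for a curve $D \subseteq \PP^1 \times \PP^1$, necessarily contained in the support of $\F$, so that its bidegree $(s', r')$ satisfies $0 \le s' \le s$, $0 \le r' \le r$ and $(s',r') \neq (0,0)$.

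Next I would compute slopes. From $\chi(\O_D(m,n))$ for a curve of bidegree $(s',r')$ one obtains
\[
\p(\E) = \p(\O_D(-i,-j)) = 1 + \frac{- r's' - r'i - s'j}{r' + s'}.
\]
Using the hypothesis $i, j < 1 - \frac{rs+t}{r+s}$ together with $r', s' \ge 0$ and $r'+s' > 0$, a direct estimate gives the strict inequality $\p(\E) > \frac{rs+t}{r+s} - \frac{r's'}{r'+s'}$. The elementary monotonicity of $(x,y) \mapsto xy/(x+y)$ in each variable, applied to $0 \le r' \le r$ and $0 \le s' \le s$, yields $\frac{r's'}{r'+s'} \le \frac{rs}{r+s}$, whence $\p(\E) > \frac{t}{r+s} = \p(\F)$, contradicting semi-stability. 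The main obstacle here is precisely the structural step that the image $\E$ is genuinely a twisted structure sheaf $\O_D(-i,-j)$ of a subcurve whose bidegree is bounded by that of $\F$; once this is in place, the slope inequality is routine.

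For (ii) I would pass to the dual sheaf. Since $\F$ is pure of dimension one on a smooth surface, it has local projective dimension one, so ${\mathcal Ext}^q(\F, \omega) = 0$ for $q \neq 1$ and ${\mathcal Ext}^1(\F, \omega) = \F^\dual$, where $\omega = \O(-2,-2)$. The local-to-global spectral sequence then collapses to $\Ext^p(\F(i,j), \omega) \simeq \H^{p-1}(\F^\dual(-i,-j))$, and Serre duality on $\PP^1 \times \PP^1$ gives $\Ext^p(\F(i,j), \omega) \simeq \H^{2-p}(\F(i,j))^*$. Taking $p = 1$ identifies $\H^1(\F(i,j))^* \simeq \H^0(\F^\dual(-i,-j))$. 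By Lemma \ref{duality} the sheaf $\F^\dual$ is semi-stable with Hilbert polynomial $rm + sn - t$, so I would apply part (i) to $\F^\dual$ with the twist $(-i,-j)$: its hypothesis $\max\{-i,-j\} < 1 - \frac{rs - t}{r+s}$ is exactly $\min\{i,j\} > -1 + \frac{rs-t}{r+s}$, and it gives $\H^0(\F^\dual(-i,-j)) = 0$, hence $\H^1(\F(i,j)) = 0$. This completes the reduction.
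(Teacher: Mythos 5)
Your proof is correct and is essentially the paper's own argument: both reduce part (i) to exhibiting a subsheaf of $\F$ of the form $\O_D(-i,-j)$ for a curve $D$ of bidegree $(s',r')$ with $r' \le r$, $s' \le s$, then derive the contradiction from the same two inequalities (the weighted average $\frac{r'i+s'j}{r'+s'} \le \max\{i,j\}$ and the monotonicity $\frac{r's'}{r'+s'} \le \frac{rs}{r+s}$), and both obtain (ii) from (i) via Serre duality $\H^1(\F(i,j)) \simeq \H^0(\F^\dual(-i,-j))^*$ together with Lemma \ref{duality}. The only (equally valid) difference is how the curve is produced: you saturate the kernel of $\O(-i,-j) \to \F$ inside a line bundle on the smooth surface, whereas the paper factors a morphism $\O_D \to \F(i,j)$ through some $\O_C$ by correcting the kernel up to a zero-dimensional discrepancy using Lemma \ref{pacific}.
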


\begin{proof}
Assume that $\H^0(\F(i, j)) \neq 0$. Then there is a non-zero morphism $\alpha \colon \O_D \to \F(i, j)$
for a curve $D \subset \PP^1 \times \PP^1$. Let $\J = \Ker(\alpha)$. By Lemma \ref{pacific} there is a curve $C \subset D$
such that the ideal sheaf $\I$ of $C$ in $\O_D$ contains $\J$ and $\I/\J$ is supported on finitely many points.
Since $\F(i, j)$ has no zero-dimensional torsion, $\alpha(\I/\J) = 0$, hence $\J = \I$,
and hence $\alpha$ factors through an injective morphism $\O_C \to \F(i, j)$.
From the semi-stability of $\F$ we get the inequality
\[
\p(\O_C(-i, -j)) = 1 - \frac{r's' + r'i + s'j}{r' + s'} \le \frac{t}{r + s} = \p(\F).
\]
Combining this with the inequalities
\[
- \frac{rs}{r + s} \le - \frac{r's'}{r' + s'}, \qquad - \max \{ i, j \} \le - \frac{r'i + s'j}{r' + s'}
\]
we obtain the inequality
\[
1 - \frac{rs}{r + s} - \max \{ i, j \} \le \frac{t}{r + s}.
\]
This contradicts the hypothesis of (i). Part (ii) follows from (i) and Serre duality.
We have
\[
\H^1(\F(i, j)) \simeq (\H^0(\F^\dual(-i,-j)))^*
\]
and, by Lemma \ref{duality}, $\F^\dual$ is semi-stable with Hilbert polynomial $rm + sn - t$.
Thus, the right-hand-side vanishes if
\[
\max \{ -i, -j \} < 1 - \frac{rs - t}{r + s}. \qedhere
\]
\end{proof}

\noindent
Using this proposition we can give another proof to the fact shown at \cite[Proposition 10]{ballico_huh}
that there are no semi-stable sheaves on $\PP^1 \times \PP^1$ with Hilbert polynomial $r m + t$,
for $r \ge 2$ and $t$ not a multiple of $r$.

\begin{corollary}
\label{empty_spaces}
The moduli spaces $\M(rm + t)$ are empty for $r \ge 2$ and $0 < t < r$.
\end{corollary}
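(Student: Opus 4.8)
The plan is to argue by contradiction using Proposition \ref{vanishing}. Suppose $\M(rm + t)$ is non-empty and choose a semi-stable sheaf $\F$ with Hilbert polynomial $P_\F(m, n) = rm + t$; this is the case $s = 0$ of Proposition \ref{vanishing}. Since $r \ge 2 > 0$, this is a non-constant linear polynomial, so $\F$ has one-dimensional support and the slope $\p(\F) = t/r$ is well-defined, and hence the hypotheses of that proposition are met.

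I would then evaluate both parts of Proposition \ref{vanishing} at $(i, j) = (0, 0)$. With $s = 0$ the threshold in part (i) equals $1 - (rs + t)/(r + s) = 1 - t/r$, which is strictly positive precisely because $t < r$; as $\max\{0, 0\} = 0 < 1 - t/r$, part (i) gives $\H^0(\F) = 0$. Similarly the threshold in part (ii) equals $-1 + (rs - t)/(r + s) = -1 - t/r$, which is negative, so $\min\{0, 0\} = 0 > -1 - t/r$ and part (ii) gives $\H^1(\F) = 0$. On the other hand the Euler characteristic is $\chi(\F) = P_\F(0, 0) = t$, so combining the two vanishings yields $t = \H^0(\F) - \H^1(\F) = 0$, contradicting the assumption $t > 0$. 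Hence $\M(rm + t)$ is empty.

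I do not anticipate a genuine obstacle, as this is a direct application of the two vanishing ranges. The only point requiring care is to check that the two numerical thresholds fall on the correct sides of $0$, and this is exactly where the hypothesis $0 < t < r$ enters: the inequality $t < r$ makes the upper threshold $1 - t/r$ positive, while $t > 0$ keeps the lower threshold $-1 - t/r$ negative. The assumption $r \ge 2$ serves only to guarantee that an integer $t$ with $0 < t < r$ exists, so that the statement is not vacuous.
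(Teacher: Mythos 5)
Your proposal is correct and follows essentially the same route as the paper: the paper's proof likewise applies Proposition \ref{vanishing} (i) and (ii) at the twist $(i,j)=(0,0)$ to get $\H^0(\F)=\H^1(\F)=0$ and concludes $t=\chi(\F)=0$, a contradiction. Your explicit verification that the thresholds $1-t/r$ and $-1-t/r$ lie on the correct sides of $0$ is exactly the (unstated) numerical check behind the paper's one-line citations of the proposition.
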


\begin{proof}
Assume that $\F$ is a semi-stable sheaf in one of these moduli spaces.
From Proposition \ref{vanishing} (i) we get $\H^0(\F) = 0$.
From Proposition \ref{vanishing} (ii) we get $\H^1(\F) = 0$.
Thus, $t = \chi(\F) = 0$, which contradicts our choice of $t$.
\end{proof} 

\begin{proposition}
\label{vanishing_M}
For $\F \in \MM$ we have $\H^0(\F(-1, -1)) = 0$, $\H^0(\F(-1, 0)) = 0$, and $\H^0(\F(0, -1)) \neq 0$ if and only if
$\F \simeq \O_C(0, 1)$ for a curve $C \subset \PP^1 \times \PP^1$ of bidegree $(2, 3)$.
\end{proposition}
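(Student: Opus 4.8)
The plan is to rely on Proposition \ref{vanishing}(i) where it is strong enough, and otherwise to re-run the slope estimate from its proof, now specialized to $\MM$, where $r = 3$, $s = 2$, $t = 1$, so that $\p(\F) = 1/5$. The mechanism is the same in all three cases: a nonzero section of a twist $\F(i,j)$ produces, exactly as in the proof of Proposition \ref{vanishing}, an injective morphism $\O_C \lra \F(i,j)$ for a curve $C$. Since $\F \in \MM$ is supported on a curve of bidegree $(2,3)$, any such $C$ lies in that support, so its bidegree $(s',r')$ satisfies $0 \le s' \le 2$, $0 \le r' \le 3$ and $(s',r') \neq (0,0)$. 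The finiteness of this list of admissible bidegrees is what makes the estimates decisive.

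For $\H^0(\F(-1,-1)) = 0$ I would simply invoke Proposition \ref{vanishing}(i): here $\max\{-1,-1\} = -1 < -2/5 = 1 - (rs+t)/(r+s)$, so the vanishing is immediate. For $\H^0(\F(-1,0)) = 0$ Proposition \ref{vanishing}(i) is too weak, since $\max\{-1,0\} = 0 \not< -2/5$, so I would argue as in its proof. A nonzero section yields an injection $\O_C \lra \F(-1,0)$, equivalently $\O_C(1,0) \lra \F$, and semi-stability of $\F$ gives
\[
\p(\O_C(1,0)) = 1 - \frac{r'(s'-1)}{r'+s'} \le \frac{1}{5},
\]
i.e. $r'(s'-1)/(r'+s') \ge 4/5$. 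Running through the finitely many admissible pairs $(s',r')$, one checks that the left-hand side never reaches $4/5$ (its largest value is $3/5$, at $(s',r') = (2,3)$), a contradiction; hence $\H^0(\F(-1,0)) = 0$.

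For the last assertion, the \emph{if} direction is immediate: if $\F \simeq \O_C(0,1)$ with $C$ of bidegree $(2,3)$, then $\F(0,-1) \simeq \O_C$ and $\H^0(\O_C) \neq 0$. For the converse I would again produce an injection $\O_C \lra \F(0,-1)$, that is $\O_C(0,1) \lra \F$, and apply semi-stability:
\[
\p(\O_C(0,1)) = 1 - \frac{s'(r'-1)}{r'+s'} \le \frac{1}{5},
\]
so $s'(r'-1)/(r'+s') \ge 4/5$. Scanning the admissible pairs, the only solution is $(s',r') = (2,3)$, where equality is attained. Then $P_{\O_C(0,1)} = 3m + 2n + 1 = P_{\F}$, so the injection $\O_C(0,1) \lra \F$ has cokernel of vanishing Hilbert polynomial; being a quotient of the one-dimensional sheaf $\F$, this cokernel must vanish, and the injection is an isomorphism $\F \simeq \O_C(0,1)$ with $C$ of bidegree $(2,3)$.

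The main obstacle is the boundary behaviour of the last case: unlike the first two assertions, the slope estimate there is not a strict contradiction but an equality, reached precisely at the full support $(2,3)$. The delicate point is therefore to upgrade the resulting injection $\O_C(0,1) \lra \F$ to an isomorphism. This is where the exact coincidence of Hilbert polynomials — rather than the mere slope inequality — is essential, and where one must be careful that no zero-dimensional cokernel survives.
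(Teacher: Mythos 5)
Your proof is correct, and for the forward directions it is essentially the paper's own argument: the paper also reduces a nonzero section of $\F(i,j)$ to an injective morphism $\O_C \to \F(i,j)$ and then scans the same eleven admissible bidegrees (its Table 1); your slope formulas reproduce that table exactly, and your observation that the inequality fails strictly in every case except $(s',r') = (2,3)$ with $(i,j) = (0,-1)$, where it becomes an equality, matches the paper's conclusion. Your treatment of the boundary case --- the equality $P_{\O_C(0,1)} = 3m + 2n + 1 = P_{\F}$ forces the cokernel of $\O_C(0,1) \hookrightarrow \F$ to have zero Hilbert polynomial and hence to vanish --- is a correct filling-in of a step the paper asserts without comment, and is the right mechanism.

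The one divergence is that you dismiss the ``if'' direction as trivial. Under the literal quantification ``for $\F \in \MM$'' this is defensible, and your proposal does prove the quoted statement. But the paper's proof devotes its entire second half (Lemma \ref{pacific} together with the scan of Table 2 over ideal subsheaves $\I \subset \O_C$) to showing that $\O_C(0,1)$ is semi-stable for \emph{every} curve $C$ of bidegree $(2,3)$, i.e. that each such sheaf actually gives a point of $\MM$. Note that this cannot be obtained for free from Proposition \ref{O_C_stable}: twisting by $\O(0,1)$ is not a twist by a power of the polarization $\O(1,1)$, so stability of $\O_C$ does not transfer to $\O_C(0,1)$ --- a subsheaf and the ambient sheaf acquire different slope corrections under such a twist, which is why the paper redoes the slope computation from scratch. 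This surjectivity of $C \mapsto \O_C(0,1)$ onto the Brill--Noether locus is exactly what is used later to identify $\MM_2 \simeq \PP^{11}$ in Theorem \ref{main_theorem}. So relative to the paper's proof your proposal omits a substantive verification, even though as a proof of the iff as literally stated it is complete; if you intend the proposition to characterize $\MM_2$, you should add the Table 2 argument or an equivalent check.
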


\begin{proof}
The vanishing of $\H^1(\F(-1, -1))$ follows from Proposition \ref{vanishing} (i).
Assume that $\H^0(\F(i, j)) \neq 0$, where $(i, j) = (-1, 0)$ or $(0, -1)$.
As in the proof of Proposition \ref{vanishing}, there is a curve $C$ and an injective morphism $\O_C \to \F(i, j)$.
In Table 1 below we have the possible bidegrees of $C$ and the slopes of $\O_C(-i, -j)$.

\begin{table}[ht]{Table 1. Possibilities for $C$.}
\begin{center}
\begin{tabular}{| c | c | c | c |}
\hline
$\deg(C)$ & $P_{\O_C}$ & $\p(\O_C(1, 0))$ & $\p(\O_C(0, 1))$
\\
\hline
$(2, 3)$ & $3m + 2n - 1$ & $2/5$ & $1/5$
\\
$(2, 2)$ & $2m + 2n$ & $1/2$ & $1/2$
\\
$(1, 3)$ & $3m + n + 1$ & $1$ & $1/2$
\\
$(2, 1)$ & $m + 2n + 1$ & $2/3$ & $1$
\\
$(1, 2)$ & $2m + n + 1$ & $1$ & $2/3$
\\
$(0, 3)$ & $3m + 3$ & $2$ & $1$
\\
$(2, 0)$ & $2n + 2$ & $1$ & $2$
\\
$(1, 1)$ & $m + n + 1$ & $1$ & $1$
\\
$(0, 2)$ & $2m + 2$ & $2$ & $1$
\\
$(1, 0)$ & $n + 1$ & $1$ & $2$
\\
$(0, 1)$ & $m + 1$ & $2$ & $1$
\\
\hline
\end{tabular}
\end{center}
\end{table}

The only case in which $\O_C(-i, -j)$ does not violate the semi-stability of $\F$ is when $\deg(C) = (2, 3)$ and $(i, j) = (0, -1)$.
We deduce that $\H^0(\F(-1, 0)) = 0$ and, if $\H^0(\F(0, -1)) \neq 0$,
then $\F \simeq \O_C(0, 1)$ for a curve $C \subset \PP^1 \times \PP^1$ of bidegree $(2, 3)$.
It remains to show that $\O_C(0, 1)$ is semi-stable.
Let $\I \subset \O_C$ be an ideal sheaf and let $\I'$ and $C'$ be as in Lemma \ref{pacific}.
In Table 2 below we have the possible bidegrees of $C'$ and the resulting slopes of $\I'(0, 1)$.

\begin{table}[ht]{Table 2. Possibilities for $C'$.}
\begin{center}
\begin{tabular}{| c | c | c | c |}
\hline
$\deg(C')$ & $P_{\O_{C'}}$ & $P_{\I'}$ & $\p(\I'(0, 1))$
\\
\hline
$(2, 2)$ & $2m + 2n$ & $m - 1$ & $-1$
\\
$(1, 3)$ & $3m + n + 1$ & $n - 2$ & $-1$
\\
$(2, 1)$ & $m + 2n + 1$ & $2m - 2$ & $-1$
\\
$(1, 2)$ & $2m + n + 1$ & $m + n - 2$ & $-1/2$
\\
$(0, 3)$ & $3m + 3$ & $2n - 4$ & $-1$
\\
$(2, 0)$ & $2n + 2$ & $3m - 3$ & $-1$
\\
$(1, 1)$ & $m + n + 1$ & $2m + n - 2$ & $-1/3$
\\
$(0, 2)$ & $2m + 2$ & $m + 2n - 3$ & $-1/3$
\\
$(1, 0)$ & $n + 1$ & $3m + n - 2$ & $-1/4$
\\
$(0, 1)$ & $m + 1$ & $2m + 2n - 2$ & $0$
\\
\hline
\end{tabular}
\end{center}
\end{table}

\noindent
In all cases $\p(\I'(0, 1)) < \p(\O_C(0, 1))$. In conclusion, $\O_C(0, 1)$ is semi-stable.
\end{proof}

\begin{proposition}
\label{H^1_vanishing}
Let $\F$ give a point in $\MM$. If $\H^0(\F(0, -1)) = 0$, then $\H^1(\F) = 0$.
\end{proposition}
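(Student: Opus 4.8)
The plan is to reduce the computation of $\H^1(\F)$ to a question on $\PP^1$ by pushing $\F$ forward along the second projection. Let $\pi \colon \PP(V_1) \times \PP(V_2) \to \PP(V_2)$ be the projection onto the second factor; its fibres are the curves of bidegree $(0,1)$, each isomorphic to $\PP(V_1) \simeq \PP^1$, and a curve of bidegree $(2,3)$ meets such a fibre in $2$ points. Since $\chi(\F(0,j)) = P_{\F}(0,j) = 2j+1$, the projection formula shows that $\pi_* \F$ is a coherent sheaf of rank $2$ on $\PP^1$ with $\chi(\pi_* \F) = \chi(\F) = 1$. The goal is to prove that the hypothesis forces $\pi_* \F \simeq \O \oplus \O(-1)$ (twists understood on $\PP(V_2)$), from which $\H^1(\F) = \H^1(\pi_* \F) = 0$ will follow at once.

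The crucial step, and the one I expect to be the main obstacle, is to show that $R^1 \pi_* \F = 0$. As the fibres of $\pi$ are one-dimensional, cohomology and base change identify the stalk $(R^1 \pi_* \F) \otimes k(p)$ with $\H^1(L_p, \F|_{L_p})$ for the fibre $L_p$ over $p$. By Serre duality on $L_p \simeq \PP^1$, a non-zero class in this group produces a surjection $\F|_{L_p} \to \O_{L_p}(d)$ with $d \le -2$, and composing with the restriction $\F \to \F|_{L_p}$ yields a quotient $\F \to \O_{L_p}(d)$. This quotient is supported on a curve of bidegree $(0,1)$, so its Hilbert polynomial is $m + (d+1)$ and its slope is $d + 1 \le -1$. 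Since $L_p$ is not the support of $\F$, this is a proper quotient of slope strictly smaller than $\p(\F) = 1/5$, contradicting the stability of $\F$. Hence $\H^1(\F|_{L_p}) = 0$ for every $p$, and $R^1 \pi_* \F = 0$.

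With the higher direct image gone, the projection formula gives $\H^i(\pi_* \F(j)) \simeq \H^i(\F(0,j))$ for all $i$ and $j$; in particular $\H^0(\pi_* \F(-1)) \simeq \H^0(\F(0,-1)) = 0$. A non-zero torsion subsheaf $T \subset \pi_* \F$ would satisfy $\H^0(T(-1)) = \H^0(T) \neq 0$ and inject into $\H^0(\pi_* \F(-1))$; therefore $\pi_* \F$ is torsion-free, hence locally free of rank $2$ on $\PP^1$, and its degree is $\chi(\pi_* \F) - 2 = -1$. Writing $\pi_* \F \simeq \O(a) \oplus \O(b)$ with $a + b = -1$, the vanishing $\H^0(\pi_* \F(-1)) = 0$ forces $a \le 0$ and $b \le 0$, so $\{a,b\} = \{0,-1\}$. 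Thus $\pi_* \F \simeq \O \oplus \O(-1)$ and $\H^1(\F) \simeq \H^1(\O \oplus \O(-1)) = 0$.

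Finally, I would note that the same conclusion can be reached through the spectral sequence of Section \ref{preliminaries}: under the hypothesis, together with the vanishings $\H^0(\F(-1,0)) = \H^0(\F(-1,-1)) = 0$ supplied by Proposition \ref{vanishing_M}, the term $E_1^{-1,0}$ vanishes and the sequence \eqref{convergence} collapses, reducing the statement to the vanishing of the differential $\f_2 \colon E_1^{-1,1} \to \H^1(\F) \tensor \O$. The push-forward argument above seems preferable because it treats reducible and non-reduced supports uniformly; the one delicate point throughout is precisely the use of stability, via the slope computation, to exclude quotients of negative slope along the fibres of $\pi$.
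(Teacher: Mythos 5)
Your proof is correct, and it takes a genuinely different route from the paper's. The paper argues through the Buchdahl spectral sequence of Section \ref{preliminaries}: setting $d = \dim \H^1(\F)$, it shows $E_1^{-1,1} \simeq \O(0,-1) \oplus 2\O(-1,0)$ by a Hilbert polynomial computation, notes that no surjection $\f_2$ onto $d\O$ exists for $d \ge 3$, and kills $d = 2$ and $d = 1$ by hand (for $d=2$ the kernel $\O(-2,-1)$ forces $\Coker(\f_2)$ to have Hilbert polynomial $2$, contradicting surjectivity; for $d=1$ explicit normal forms for $\f_2$, $\f_1$ give $\Ker(\f_1) \simeq 2\O(-1,-1)$ and hence a destabilizing subsheaf $\Coker(\f_5)$ of slope $1/2$). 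Your push-forward along $\pi$ replaces this case analysis by a single fibrewise use of stability, and the slope computation is sound: a quotient $\F \to \O_{L_p}(d)$ with $d \le -2$ has kernel of slope $-d/4 \ge 1/2 > 1/5 = \p(\F)$, and the endgame on $\PP^1$ (torsion-freeness of $\pi_*\F$ from $\H^0((\pi_*\F)(-1)) = 0$, then $\pi_*\F \simeq \O \oplus \O(-1)$) is fine. The one step you should state more carefully is the appeal to cohomology and base change: $\F$ need not be flat over $\PP(V_2)$ --- it fails to be flat exactly when $\operatorname{supp}\F$ contains a fibre $L_p$, e.g.\ for the extension sheaves of $\MM_3$ --- and this is precisely the locus where $\H^1(\F|_{L_p})$ could be non-zero, so the usual flatness-assuming theorem does not literally apply there. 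The gap is easily closed, since base change in the top degree holds for arbitrary coherent sheaves: from $0 \lra \Ker(t) \lra \F(0,-1) \stackrel{t}{\lra} \F \lra \F|_{L_p} \lra 0$, with $t$ an equation of $L_p$, the vanishing of $R^2\pi_*$ gives $(R^1\pi_*\F) \otimes k(p) \simeq \H^1(L_p, \F|_{L_p})$, and Nakayama then yields $R^1\pi_*\F = 0$. With that patch your argument is complete, shorter, and arguably more transparent than the paper's; it also generalizes readily to other moduli $\M(rm+sn+t)$. What the paper's longer computation buys instead is its by-products: the identification $E_1^{-1,1} \simeq \O(0,-1) \oplus 2\O(-1,0)$ and the resulting exact sequence (\ref{generic_convergence}) are exactly the inputs on which the classification in Section \ref{classification} is built, whereas your argument proves the vanishing without producing any resolution data.
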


\begin{proof}
Denote $d = \dim \H^1(\F)$. In view of Proposition \ref{vanishing_M} and exact sequence (\ref{E_1^{-1,0}})
we get $E_1^{-1,0} = 0$. Thus, the exact sequence (\ref{convergence}) becomes
\[
0 \lra \Ker(\f_1) \stackrel{\f_5}{\lra} (d+1)\O \lra \F \lra \Ker(\f_2)/\Image(\f_1) \lra 0.
\]
From Proposition \ref{vanishing_M} we get $\H^1(\F(-1, -1)) \simeq \CC^4$, hence we have the exact sequence
\[
0 \lra \Ker(\f_1) \lra 4\O(-1, -1) \lra \Image(\f_1) \lra 0.
\]
We also have the exact sequence
\[
0 \lra \Ker(\f_2) \lra E_1^{-1,1} \lra d \O \lra 0.
\]
From these exact sequences we can compute the Hilbert polynomial of $E_1^{-1,1}$:
\begin{align*}
P_{E_1^{-1,1}} & = P_{\Ker(\f_2)} + d P_{\O} = P_{\Ker(\f_2)/\Image(\f_1)} + P_{\Image(\f_1)} + d P_{\O} \\
& = P_{\F} - (d+1) P_{\O} + P_{\Ker(\f_1)} + 4 P_{\O(-1, -1)} - P_{\Ker(\f_1)} + d P_{\O} \\
& = P_{\F} - P_{\O} + 4 P_{\O(-1, -1)} \\
& = 3mn + 2m + n.
\end{align*}
The exact sequence (\ref{E_1^{-1,1}}) becomes
\[
\O(0, -1) \lra E_1^{-1, 1} \lra 2\O(-1,0).
\]
Since $P_{E_1^{-1,1}} = P_{\O(0, -1)} + 2 P_{\O(-1, 0)}$ this sequence is also exact on the left and right, and, in fact,
it is split exact. We deduce that $E_1^{-1,1} \simeq \O(0, -1) \oplus 2\O(-1, 0)$.
It follows that $d \le 2$ because there is, obviously, no surjective morphism
\[
\f_2 \colon \O(0, -1) \oplus 2\O(-1, 0) \lra d \O
\]
for $d \ge 3$. Assume that $d = 2$. Then the maximal minors of $\f_2$ have no common factor, otherwise $\f_2$
would not be surjective. It follows that $\Ker(\f_2) \simeq \O(-2, -1)$. We have
\[
P_{\Coker(\f_2)} = 2 P_{\O} - P_{\O(0, -1)} - 2 P_{\O(-1, 0)} + P_{\O(-2, -1)} = 2
\]
which contradicts the surjectivity of $\f_2$.
Assume that $d = 1$. If the restriction of $\f_2$ to $\O(0, -1)$ were zero, then $\Ker(\f_2) \simeq \O(0, -1) \oplus \O(-2, 0)$.
This would yield a contradiction because $\Ker(\f_2)/\Image(\f_1)$ would contain $\O(-2, 0)$ as a direct summand,
but there is no surjective morphism $\F \to \O(-2, 0)$. Thus, we may write
\[
\f_2 = \left[
\begin{array}{ccc}
-1 \tensor z & x \tensor 1 & y \tensor 1
\end{array}
\right],
\]
\[
\f_1 = \left[
\begin{array}{cccc}
x \tensor 1 & y \tensor 1 & 0 & 0 \\
1 \tensor z & 0 & 0 & 0 \\
0 & 1 \tensor z & 0 & 0
\end{array}
\right].
\]
It follows that $\Ker(\f_1) \simeq 2\O(-1, -1)$.
Thus, $\Coker(\f_5)$ has Hilbert polynomial $2 P_{\O} - 2 P_{\O(-1, -1)} = 2m + 2n + 2$, hence it has slope $1/2$,
and hence it is a destabilizing subsheaf of $\F$.
In conclusion, $d = 0$.
\end{proof}


\section{Classification of sheaves}
\label{classification}

Assume that $\F$ gives a point in $\MM$ and that $\H^0(\F(0, -1)) = 0$.
Then, as seen at Proposition \ref{H^1_vanishing}, $\H^1(\F) = 0$, and, as seen in the proof of this proposition,
$E_1^{-1, 1} \simeq \O(0, -1) \oplus 2\O(-1, 0)$. Thus, the exact sequence (\ref{convergence}) becomes
\begin{equation}
\label{generic_convergence}
0 \lra \Ker(\f_1) \stackrel{\f_5}{\lra} \O \lra \F \lra \Coker(\f_1) \lra 0,
\end{equation}
where
\[
\f_1 \colon 4 \O(-1, -1) \lra \O(0, -1) \oplus 2\O(-1, 0).
\]

\begin{lemma}
\label{generic_sheaves}
Assume that $\F$ gives a point in $\MM$ and that $\H^0(\F(0, -1)) = 0$.
Assume that the maximal minors of $\f_1$ have no common factor.
Then $\Ker(\f_1) \simeq \O(-2, -3)$ and $\Coker(\f_1)$ is isomorphic to the structure sheaf of a zero-dimensional
subscheme $Z \subset \PP^1 \times \PP^1$ of length $2$.
Moreover, $Z$ is not contained in a line of bidegree $(0, 1)$.
Thus, we have a non-split extension
\begin{equation}
\label{generic_extension}
0 \lra \O_C \lra \F \lra \O_Z \lra 0
\end{equation}
where $C$ is a curve of bidegree $(2, 3)$ containing $Z$.
\end{lemma}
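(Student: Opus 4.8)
The plan is to read off everything from the four-term exact sequence (\ref{generic_convergence}) together with the morphism
\[
\f_1 \colon 4\O(-1, -1) \lra \O(0, -1) \oplus 2\O(-1, 0).
\]
First I would note that the hypothesis on the maximal minors is precisely the condition that $\Coker(\f_1)$ be zero-dimensional: a positive-degree common factor of the four $(1,2)$-minors would force a one-dimensional component of the degeneracy locus, and conversely a divisorial component of that locus would divide every minor. Granting this, $\f_1$ is generically surjective, so $\Ker(\f_1)$ is a rank-one subsheaf of $4\O(-1,-1)$; being the kernel of a morphism of locally free sheaves it is reflexive, hence locally free on the smooth surface, i.e. a line bundle $\O(a,b)$. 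The integers $a$, $b$ and the length of $\Coker(\f_1)$ are then pinned down by comparing Hilbert polynomials in
\[
0 \lra \O(a, b) \lra 4\O(-1, -1) \stackrel{\f_1}{\lra} \O(0, -1) \oplus 2\O(-1, 0) \lra \Coker(\f_1) \lra 0;
\]
matching the coefficients of $m$, of $n$, and the constant term yields $(a,b) = (-2,-3)$ and length $2$.

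The first genuinely delicate point is that $\Coker(\f_1)$ is an honest structure sheaf $\O_Z$ rather than the non-cyclic length-two sheaf supported at a single point. Here I would dualize the resolution above. Since $\Coker(\f_1)$ is zero-dimensional, hence Cohen--Macaulay of codimension $2$, one has ${\mathcal Ext}^i(\Coker(\f_1), \O) = 0$ for $i < 2$, and applying ${\mathcal Hom}(-, \O)$ turns the resolution into
\[
0 \lra \O(0, 1) \oplus 2\O(1, 0) \lra 4\O(1, 1) \lra \O(2, 3) \lra {\mathcal Ext}^2(\Coker(\f_1), \O) \lra 0.
\]
The last map lands in the \emph{line} bundle $\O(2,3)$, so its cokernel is a cyclic sheaf; thus the dual of $\Coker(\f_1)$ is the structure sheaf of a length-two subscheme, and as such a sheaf is Gorenstein, biduality returns $\Coker(\f_1) \simeq \O_Z$. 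With this settled the extension is immediate: the injection $\f_5 \colon \O(-2,-3) \to \O$ is a nonzero section of $\O(2,3)$ cutting out a curve $C$ of bidegree $(2,3)$, so $\Coker(\f_5) = \O_C$, and splitting (\ref{generic_convergence}) into two short exact sequences produces $0 \to \O_C \to \F \to \O_Z \to 0$. That $Z \subset C$ follows because $\O_Z$ is a quotient of $\F$, whose support is $C$: a point of $Z$ off $C$ would exhibit a zero-dimensional subsheaf of $\F$, contradicting semi-stability. The same purity shows the extension is non-split, since a splitting would display $\O_Z$ as a zero-dimensional subsheaf of $\F$.

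The remaining assertion, that $Z$ lies on no line of bidegree $(0,1)$, is where I expect the main difficulty, but it admits a clean argument by restriction. Because $\Coker(\f_1) = \O_Z$ is cyclic, its zeroth Fitting ideal equals the ideal of $Z$, so the four $(1,2)$-minors $M_1, \dots, M_4$ of $\f_1$ generate $I_Z$. Suppose $Z$ were contained in a line $L$ of bidegree $(0,1)$, say $L = \{\ell = 0\}$ with $\ell \in \Sym^1 V_2^*$. Restricting to $L \simeq \PP^1$ we have $\O(1,2)|_L \simeq \O_{\PP^1}(1)$, so each $M_j|_L$ is a section of $\O_{\PP^1}(1)$; since $M_j$ vanishes on $Z$ and $Z \subset L$ has length $2$, the section $M_j|_L$ would vanish on a length-two subscheme of $\PP^1$ and hence vanish identically. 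Thus $\ell$ divides every $M_j$, contradicting the hypothesis that the maximal minors have no common factor. This contradiction forces $Z \not\subset L$, and completes the verification of all the assertions of the lemma.
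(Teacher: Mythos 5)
Your proposal is sound in its main steps, and in two places it takes a genuinely different route from the paper. For the identification $\Coker(\f_1) \simeq \O_Z$, the paper works in the opposite direction: it defines $Z$ outright by the ideal generated by the four maximal minors, writes down the Hilbert--Burch-type resolution of $\I_Z$, and dualizes \emph{that} to recover a twist of $\f_1$; your version (dualize the four-term resolution of $\Coker(\f_1)$, observe that the dual is a quotient of the line bundle $\O(2,3)$, hence cyclic, then return by biduality using that every length-two scheme is Gorenstein) is essentially the same duality computation run backwards, and it is correct. Likewise, where the paper quotes the well-known exactness of the Eagon--Northcott-type complex to get $\Ker(\f_1) \simeq \O(-2,-3)$ directly from the vector of signed minors, you get it more abstractly (kernel of a map of locally free sheaves is reflexive, hence a line bundle on a smooth surface, pinned down by Hilbert polynomials) --- a harmless variant, and your bookkeeping giving $(a,b) = (-2,-3)$ and length $2$ checks out. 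The real divergence is the proof that $Z$ lies on no line of bidegree $(0,1)$: the paper compares the Hilbert--Burch resolution of $\I_Z$ with the resolution $0 \to \O(-2,-1) \to \O(-2,0) \oplus \O(0,-1) \to \I_Z \to 0$ that would exist if $Z \subset L$, and reaches a contradiction by a diagram chase on the vertical maps. Your restriction argument --- the minors lie in $\I_Z$ (note $\operatorname{Fitt}_0 \subseteq \operatorname{Ann}$ holds in general, so cyclicity is not even needed at this point), and on $L \simeq \PP^1$ one has $\O(1,2)|_L \simeq \O_{\PP^1}(1)$, whose sections vanishing on a length-two subscheme vanish identically, so $\ell$ would divide every minor --- is shorter and cleaner than the paper's.

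There is one genuine, though easily repaired, gap: your justification of $Z \subset C$ is only set-theoretic. Ruling out a point of $Z$ off $C$ gives $\operatorname{supp}(Z) \subset C$, but the lemma asserts (and its later uses, notably Lemma \ref{unique_extension} and Proposition \ref{sheaves_M_0}, require) the scheme-theoretic inclusion, which is strictly stronger exactly in the delicate case where $Z$ is a double point supported at a point $p \in C$: there the support condition is automatic, while $Z \subset C$ constrains the tangent direction of $Z$ at $p$. The paper's argument yields the full statement in one line: if $f$ is an equation of $C$, multiplication by $f$ on $\F$ kills the subsheaf $\O_C$, hence factors through a map $\O_Z \to \F$ whose image is a zero-dimensional subsheaf of $\F$ and therefore zero by purity; thus $f \F = 0$, so $f$ annihilates the quotient $\O_Z$, i.e.\ $f \in \I_Z$ and $Z \subset C$ as schemes. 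With that one sentence added, your proof is complete.
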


\begin{proof}
Let $\zeta_j$, $1 \le j \le 4$, be the maximal minor of $\f_1$ obtained by deleting column $j$, for a matrix representation
of $\f_1$. It is well-known that the sequence
\[
0 \lra \O(-2, -3) \stackrel{\zeta}{\lra} 4\O(-1, -1) \stackrel{\f_1}{\lra} \O(0, -1) \oplus 2\O(-1, 0),
\]
\[
\zeta = \left[
\begin{array}{cccc}
\zeta_1 & -\zeta_2 & \phantom{-} \zeta_3 & - \zeta_4
\end{array}
\right]^\trans,
\]
is exact. Let $Z \subset \PP^1 \times \PP^1$ be the subscheme given by the ideal $(\zeta_1, \zeta_2, \zeta_3, \zeta_4)$.
The Hilbert polynomial of $\O_Z$ can be computed from the exact sequence
\[
0 \lra \O(-2, -2) \oplus 2\O(-1, -3) \stackrel{\f_1^\trans}{\lra} 4\O(-1, -2) \stackrel{\zeta^\trans}{\lra} \O \lra \O_Z \lra 0.
\]
We get $P_{\O_Z} = 2$, hence $Z$ is zero-dimensional of length $2$.
From the short exact sequence
\[
0 \lra \I_Z \lra \O \lra \O_Z \lra 0
\]
we get the long exact sequence
\begin{align*}
0 & \lra {\mathcal Hom}(\O_Z, \O) \lra {\mathcal Hom}(\O, \O) \lra {\mathcal Hom}(\I_Z, \O) \\
& \lra {\mathcal Ext}^1(\O_Z, \O) \lra {\mathcal Ext}^1(\O, \O) \lra {\mathcal Ext}^1(\I_Z, \O) \\
& \lra {\mathcal Ext}^2(\O_Z, \O) \lra {\mathcal Ext}^2(\O, \O).
\end{align*}
The sheaves ${\mathcal Hom}(\O_Z, \O)$, ${\mathcal Ext}^1(\O_Z, \O)$, ${\mathcal Ext}^1(\O, \O)$, ${\mathcal Ext}^2(\O, \O)$
are zero, hence we get the isomorphisms
\[
{\mathcal Hom}(\I_Z, \O) \simeq \O, \qquad {\mathcal Ext}^1(\I_Z, \O) \simeq {\mathcal Ext}^2(\O_Z, \O) \simeq \O_Z.
\]
We apply the long ${\mathcal Ext}(-,\O)$-sequence to the short exact sequence
\[
0 \lra \O(-2, -2) \oplus 2\O(-1, -3) \stackrel{\f_1^\trans}{\lra} 4\O(-1, -2) \lra \I_Z \lra 0
\]
and we use the above isomorphisms to obtain the exact sequence
\[
0 \lra \O \lra 4\O(1, 2) \stackrel{\psi}{\lra} \O(2, 2) \oplus 2\O(1, 3) \lra \O_Z \lra 0.
\]
The morphism $\psi$ is a twist of $\f_1$.
Assume that $Z$ were contained in a line of bidegree $(0, 1)$. Then we would have a commutative diagram
\[
\xymatrix
{
0 \ar[r] & \O(-2, -2) \oplus 2\O(-1, -3) \ar[r] \ar[d]^-{\beta} & 4\O(-1, -2) \ar[r] \ar[d]^-{\alpha} & \I_Z \ar[r] \ar@{=}[d] & 0 \\
0 \ar[r] & \O(-2, -1) \ar[r] & \O(-2, 0) \oplus \O(0, -1) \ar[r] & \I_Z \ar[r] & 0
}
\]
in which $\alpha \neq 0$. Thus $\rank (\Ker(\alpha)) = 3$, hence $\beta = 0$, and hence $\Coker(\beta) \simeq \O(-2, -1)$
contains $\O(-2, 0)$ as a direct summand. This is absurd.

The exact sequence (\ref{generic_extension}) follows from (\ref{generic_convergence}) with $\O_C = \Coker(\f_5)$.
From sequence (\ref{generic_extension}), and since $\F$ has no zero-dimensional torsion,
we see that $\F$ has schematic support $C$, hence $Z$ is contained in $C$.
\end{proof}

\begin{lemma}
\label{unique_extension}
Let $C \subset \PP^1 \times \PP^1$ be a curve of bidegree $(2, 3)$ and let $Z \subset C$ be a zero-dimensional subscheme of length $2$.
Let $\F$ be an extension of $\O_Z$ by $\O_C$ that has no zero-dimensional torsion.
Then $\F$ is uniquely determined up to isomorphism.
This means that if $\G$ is another extension of $\O_Z$ by $\O_C$ that has no zero-dimensional torsion, then $\F \simeq \G$.
\end{lemma}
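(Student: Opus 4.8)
The plan is to parametrize the extensions of $\O_Z$ by $\O_C$ by the vector space $\Ext^1(\O_Z, \O_C)$ and to show that the extension classes whose middle term carries no zero-dimensional torsion form a single orbit under the natural action of $\Aut(\O_C) \times \Aut(\O_Z)$. Since two classes in the same orbit have isomorphic middle terms (one assembles the isomorphism from the two automorphisms), this single-orbit statement gives the lemma at once. Here $\Aut(\O_C) = \CC^*$, because $C$ has bidegree $(2, 3)$ and so $\O_C$ is stable by Proposition \ref{O_C_stable}, whence $\Hom(\O_C, \O_C) = \CC$; and $\Aut(\O_Z) = \Hom(\O_Z, \O_Z)^\times$, the group of units of the Artinian ring $\O_Z$ of length $2$.

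The first computation is $\Ext^1(\O_Z, \O_C)$. As $\O_Z$ is supported in dimension zero, the local-to-global spectral sequence collapses and $\Ext^1(\O_Z, \O_C) = \H^0({\mathcal Ext}^1(\O_Z, \O_C))$, while $\Hom(\O_Z, \O_C) = 0$ because $\O_C$ is pure. To evaluate ${\mathcal Ext}^1(\O_Z, \O_C)$ I would apply ${\mathcal Ext}(\O_Z, -)$ to the locally free resolution $0 \lra \O(-2, -3) \lra \O \lra \O_C \lra 0$ of the Cartier divisor $C$. Because $Z \subset C$, multiplication by the local equation of $C$ annihilates every sheaf supported on $Z$; in particular it acts as zero on ${\mathcal Ext}^2(\O_Z, \O)$, and the long exact sequence then yields ${\mathcal Ext}^1(\O_Z, \O_C) \simeq {\mathcal Ext}^2(\O_Z, \O) \simeq \omega_Z \tensor \omega^{-1}$, where $\omega = \omega_{\PP^1 \times \PP^1}$ and $\omega_Z = {\mathcal Ext}^2(\O_Z, \omega)$ is the dualizing sheaf of $Z$. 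Restricted to the finite scheme $Z$ the line bundle $\omega^{-1}$ is trivial, so ${\mathcal Ext}^1(\O_Z, \O_C) \simeq \omega_Z$ as an $\O_Z$-module. Since $Z$ has length $2$ it is Gorenstein (either two reduced points or a single curvilinear double point), hence $\omega_Z$ is free of rank one over $\O_Z$. Therefore $\Ext^1(\O_Z, \O_C)$ is a free module of rank one over $\Hom(\O_Z, \O_Z) = \O_Z$.

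Next I would translate the torsion-free condition into module language. If $T \subset \F$ is a nonzero zero-dimensional subsheaf, then $T \cap \O_C = 0$ since $\O_C$ is pure, so $T$ maps injectively into $\O_Z$ and the extension splits over $T$; conversely, any nonzero subsheaf of $\O_Z$ over which the class restricts to zero lifts to torsion in $\F$. Restricting to the simple subsheaves of $\O_Z$ and invoking Nakayama, this says exactly that $\F$ has no zero-dimensional torsion if and only if the class $e \in \Ext^1(\O_Z, \O_C)$ generates this module over $\O_Z$. Now the action of $\Aut(\O_Z) = \Hom(\O_Z, \O_Z)^\times$ on the free rank-one module $\Ext^1(\O_Z, \O_C)$ is multiplication by units, which is transitive on generators. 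Thus the torsion-free classes form a single $\Aut(\O_Z)$-orbit, hence a single $\Aut(\O_C) \times \Aut(\O_Z)$-orbit, which is what we wanted.

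I expect the main obstacle to be the two structural inputs of the middle paragraph: identifying ${\mathcal Ext}^1(\O_Z, \O_C)$ with $\omega_Z$ and recognizing it as a free rank-one $\O_Z$-module, and pinning down the dictionary between absence of zero-dimensional torsion and the generator property uniformly in $Z$. If the abstract Gorenstein argument feels slippery, the cleanest fallback is an explicit case split. When $Z$ is two distinct reduced points one has $\O_Z \simeq \CC \times \CC$ and $\Ext^1(\O_Z, \O_C) \simeq \CC^2$, torsion-freeness amounts to both coordinates being nonzero, and $\Aut(\O_Z) = (\CC^*)^2$ acts transitively on $(\CC^*)^2$; when $Z$ is a length-two nonreduced point one has $\O_Z \simeq \CC[\epsilon]/(\epsilon^2)$ and $\Ext^1(\O_Z, \O_C) \simeq \O_Z$, torsion-freeness amounts to the class being a unit, and $\Aut(\O_Z) = \O_Z^\times$ acts transitively on the units. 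Either route closes the proof.
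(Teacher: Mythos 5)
Your proposal is correct, but it does not follow the paper's own proof of this lemma; it is instead, in essence, a uniform module-theoretic version of the alternative argument the paper records separately as Lemma \ref{general_extensions} (due to Dr\'ezet). The paper's proof of Lemma \ref{unique_extension} proceeds by cases: for $Z = \{p, q\}$ reduced it builds $\F$ as an iterated extension, first the unique non-split extension $\E$ of $\CC_p$ by $\O_C$, then the unique non-split extension of $\CC_q$ by $\E$; for $Z$ a double point it constructs an explicit locally free resolution of $\E$ and splits further according to whether $p$ is a regular or singular point of $C$, the singular case requiring the map $\upsilon_Z \colon U_Z \to \PP(\Ext^1(\CC_p, \E))$ and a variation argument over the $\PP^1$ of length-two subschemes $Z'$ supported at $p$ to show that the image of $\upsilon_Z$ is a point. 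Your single-orbit argument replaces all of this: as in Dr\'ezet's lemma, you identify $\Ext^1(\O_Z, \O_C)$ concretely ($\simeq \CC^2$, agreeing with the paper), translate absence of zero-dimensional torsion into a non-degeneracy condition on the extension class, and conclude by transitivity of the $\Aut(\O_Z)$-action on the non-degenerate classes. What your packaging buys over both of the paper's arguments is uniformity: the Gorenstein observation that $\omega_Z$, and hence $\Ext^1(\O_Z, \O_C)$, is free of rank one over $\O_Z$ handles the reduced and non-reduced cases simultaneously (Dr\'ezet's lemma assumes $Z$ concentrated at one point, and the paper needs the iterated-extension argument for the reduced case), and it makes no reference to whether $C$ is smooth at $p$, which is exactly the distinction that forces the hardest case in the paper's proof. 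What the paper's case-by-case proof buys is the auxiliary resolution of $\E$ and the explicit description of when $\psi(p) = 0$, which is of independent use.

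Two small repairs, neither fatal. First, the long exact sequence gives ${\mathcal Ext}^1(\O_Z, \O_C) \simeq {\mathcal Ext}^2(\O_Z, \O(-2, -3))$ rather than ${\mathcal Ext}^2(\O_Z, \O)$; this is harmless by the same remark you make for $\omega^{-1}$, since every line bundle restricts trivially to the length-two scheme $Z$. Second, in the non-reduced case the equivalence ``torsion-free $\iff$ $e$ generates'' needs one more line than Nakayama: you must know that the restriction map from $\Ext^1(\O_Z, \O_C)$ to $\Ext^1(S, \O_C)$, where $S \simeq \CC_p$ is the socle, is nonzero, since otherwise every class (generator or not) would restrict to zero on $S$. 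This follows from the exact sequence $\Ext^1(\CC_p, \O_C) \xrightarrow{\Phi} \Ext^1(\O_Z, \O_C) \lra \Ext^1(S, \O_C)$ together with $\dim \Ext^1(\CC_p, \O_C) = 1 < 2$; alternatively, by $\O_Z$-linearity $\operatorname{Im}(\Phi)$ is killed by $\epsilon$ and so is contained in $\epsilon \cdot \Ext^1(\O_Z, \O_C)$, which also gives the converse direction that every class in $\epsilon \cdot \Ext^1(\O_Z, \O_C)$ restricts to zero on $S$ and hence has torsion. With that line added, your proof closes correctly.
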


\begin{proof}
By Serre duality $\Ext^1(\O_Z, \O_C) \simeq (\Ext^1(\O_C, \O_Z))^*$.
From the short exact sequence
\begin{equation}
\label{O_C_resolution}
0 \lra \O(-2, -3) \lra \O \lra \O_C \lra 0
\end{equation}
we get the long exact sequence
\begin{multline*}
0 \lra \Hom(\O_C, \O_Z) \simeq \H^0(\O_Z) \simeq \CC^2 \lra \Hom(\O, \O_Z) \simeq \CC^2 \\
\lra \Hom(\O(-2, -3), \O_Z) \simeq \CC^2 \lra \Ext^1(\O_C, \O_Z) \lra \Ext^1(\O, \O_Z) = 0
\end{multline*}
We obtain $\Ext^1(\O_Z, \O_C) \simeq \CC^2$.

Assume that $Z = \{ p, q \}$ for distinct points $p, q \in C$.
We denote by $\CC_p$ and $\CC_q$ the structure sheaves of the subschemes $\{ p \}$, respectively, $\{ q \} \subset \PP^1 \times \PP^1$.
From sequence (\ref{O_C_resolution}) we get the long exact sequence
\begin{multline*}
0 \lra \Hom (\O_C, \CC_p) \simeq \CC \lra \Hom(\O, \CC_p) \simeq \CC \lra \Hom(\O(-2, -3), \CC_p) \simeq \CC \\
\lra \Ext^1(\O_C, \CC_p) \simeq (\Ext^1(\CC_p, \O_C))^* \lra \Ext^1(\O, \CC_p) = 0.
\end{multline*}
Thus, there is a unique non-trivial extension of $\CC_p$ by $\O_C$, denoted by $\E$.
From the short exact sequence
\[
0 \lra \O_C \lra \E \lra \CC_p \lra 0
\]
we get the long exact sequence
\[
0 = \Hom(\CC_q, \CC_p) \lra \Ext^1(\CC_q, \O_C) \simeq \CC \lra \Ext^1(\CC_q, \E) \lra \Ext^1(\CC_q, \CC_p) = 0.
\]
Thus, there is a unique non-trivial extension of $\CC_q$ by $\E$, hence $\F$ is unique up to isomorphism.

We next consider the case when $Z$ is a double point supported on $p \in C$.
We construct a resolution of $\E$ by combining resolution (\ref{O_C_resolution}) with the resolution
\[
0 \lra \O(-2, -3) \lra \O(-2, -2) \oplus \O(-1, -3) \lra \O(-1, -2) \lra \CC_p \lra 0.
\]
The map $\O(-1, -2) \to \CC_p$ lifts to $\E$ because $\H^1(\O_C(1, 2)) = 0$.
Applying the argument at the proof of \cite[Proposition 2.3.2]{illinois}, which uses the fact that $\Ext^1(\CC_p, \O) = 0$,
we can show that the induced map $\O(-2, -3) \to \O(-2, -3)$ is non-zero.
We obtain the resolution
\[
0 \lra \O(-2, -2) \oplus \O(-1, -3) \stackrel{\psi}{\lra} \O(-1, -2) \oplus \O \lra \E \lra 0
\]
in which $\psi_{11} \neq 0$, $\psi_{12} \neq 0$, $\psi_{11}(p) = 0$, $\psi_{12}(p) = 0$.
Moreover, $\psi_{21}(p) = 0$ and $\psi_{22}(p) = 0$ if and only if $p$ is a singular point of $C$.
We have the exact sequence
\begin{multline*}
\Hom(\O(-1, -2) \oplus \O, \CC_p) \simeq \CC^2 \xrightarrow{\psi(p)} \Hom(\O(-2, -2) \oplus \O(-1, -3), \CC_p) \simeq \CC^2 \\
\lra \Ext^1(\E, \CC_p) \simeq (\Ext^1(\CC_p, \E))^* \lra \Ext^1(\O(-1, -2) \oplus \O, \CC_p) = 0.
\end{multline*}
We get a unique non-trivial extension of $\CC_p$ by $\E$ if $p$ is a regular point of $C$.
In this case $\F$ is unique up to isomorphism.

Assume now that $p$ is a singular point of $C$. Then $\psi(p) = 0$, hence $\Ext^1(\CC_p, \E) \simeq \CC^2$.
According to \cite[Proposition 2.3.1]{huybrechts_lehn}, the subset $U_Z \subset \PP(\Ext^1(\O_Z, \O_C)) \simeq \PP^1$
of extension sheaves having no zero-dimensional torsion is open.
We construct a map $\upsilon_Z \colon U_Z \to \PP(\Ext^1(\CC_p, \E)) \simeq \PP^1$ as follows.
Let $\I$ be the ideal sheaf of $\{ p \}$ in $Z$. Note that $\I \simeq \CC_p$ as modules over $\O$.
Given $\F \in U_Z$ let $\A$ be the pull-back in $\F$ of $\I$.
Then there is a unique isomorphism $\E \to \A$ making the diagram commute
\[
\xymatrix
{
0 \ar[r] & \O_C \ar[r] \ar@{=}[d] & \E \ar[r] \ar[d] & \CC_p \ar[r] \ar@{=}[d] & 0 \\
0 \ar[r] & \O_C \ar[r] & \A \ar[r] & \CC_p \ar[r] & 0
}
\]
The composite map $\E \to \A \to \F$ has cokernel $\CC_p$, so $\F$ is an extension of $\CC_p$ by $\E$.
We claim that the image of $\upsilon_Z$ is a point.
If we can prove this claim, then it will follow that $\F$ is uniquely determined up to isomorphism.
Assume that the image of $\upsilon_Z$ is an open subset of $\PP^1$.
The zero-dimensional schemes $Z'$ of length $2$ supported on $p$ are parametrized by $\PP^1$.
Thus there is $Z' \neq Z$ such that $\upsilon_{Z'}(U_{Z'}) \cap \upsilon_Z (U_Z) \neq \emptyset$.
This means that we have extensions $\F \in U_Z$, $\F' \in U_{Z'}$, and a commutative diagram with exact rows
\[
\xymatrix
{
0 \ar[r] & \E \ar@{=}[d] \ar[r] & \F \ar[r] \ar[d]^-{\simeq} & \CC_p \ar[r] \ar@{=}[d] & 0 \\
0 \ar[r] & \E \ar[r] & \F' \ar[r] & \CC_p \ar[r] & 0
}
\]
The isomorphism $\F \to \F'$ fits into a commutative square
\[
\xymatrix
{
\O_C \ar[r] \ar@{=}[d] & \F \ar[d]^-{\simeq} \\
\O_C \ar[r] & \F'
}
\]
We get an induced isomorphism of cokernels $\O_Z \to \O_{Z'}$, which contradicts our choice of $Z'$.
In conclusion, the image of $\upsilon_Z$ is a point.
\end{proof}

\noindent
The difficult case in the previous lemma is when $Z$ is concentrated in one point.
For this case we will give an alternate more general argument.
The following lemma and its proof were provided by Jean-Marc Dr\'ezet, to whom the author is grateful.

\begin{lemma}
\label{general_extensions}
Let $S$ be a smooth projective surface and $C \subset S$ a Cohen-Macaulay curve.
Let $Z \subset C$ be a zero-dimensional subscheme of length $2$ concentrated on a single point $p$,
and $\L$ a line bundle on $C$.
Then there exists an extension
\begin{equation}
\label{general_extension}
0 \lra \L \lra \F \lra \O_Z \lra 0
\end{equation}
where $\F$ has no zero-dimensional torsion. The sheaf $\F$ is unique up to isomorphism.
\end{lemma}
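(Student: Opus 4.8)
My plan is to transport the statement to the dual side, where it becomes a question about sub-line-bundles. Since the curve $C$ is a Cartier divisor in the smooth surface $S$, it is Gorenstein, so its dualizing sheaf $\omega_C$ is a line bundle and the dual $\L^\dual = {\mathcal Ext}^1_{\O_S}(\L, \omega_S)$ is again a line bundle on $C$. The functor $\F \mapsto \F^\dual$ is an involutive anti-equivalence of the category of pure one-dimensional sheaves on $S$ (compare Lemma \ref{duality} and \cite{rendiconti}), and the companion operation ${\mathcal Ext}^2_{\O_S}(-, \omega_S)$ is an involution on zero-dimensional sheaves that annihilates ${\mathcal Ext}^i$ for $i \neq 2$. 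Because $Z$ has length two and is concentrated at $p$, it is curvilinear, so $\O_Z \simeq \CC[t]/(t^2)$ is Gorenstein and $\O_{\tilde Z} := {\mathcal Ext}^2_{\O_S}(\O_Z, \omega_S)$ is again a cyclic sheaf of length two concentrated at $p$.

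The first step is to dualize. Applying the long exact sequence of the functors ${\mathcal Ext}^{\bullet}_{\O_S}(-, \omega_S)$ to a pure extension (\ref{general_extension}), and using that ${\mathcal Ext}^i_{\O_S}(\O_Z, \omega_S) = 0$ for $i \neq 2$ while ${\mathcal Ext}^i_{\O_S}(\L, \omega_S) = {\mathcal Ext}^i_{\O_S}(\F, \omega_S) = 0$ for $i \neq 1$ (as $\L$ and $\F$ are pure of dimension one), the sequence collapses to
\[
0 \lra \F^\dual \lra \L^\dual \lra \O_{\tilde Z} \lra 0.
\]
Conversely, dualizing any short exact sequence $0 \to \G \to \L^\dual \to \O_{\tilde Z} \to 0$ with $\G$ pure recovers, by the same computation together with the involutivity of the two dualities, a pure extension $0 \to \L \to \G^\dual \to \O_Z \to 0$. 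Thus $\F \mapsto \F^\dual$ identifies the isomorphism classes of pure extensions of $\O_Z$ by $\L$ with the subsheaves $\G \subset \L^\dual$ whose quotient is isomorphic to $\O_{\tilde Z}$, equivalently with the kernels of surjections $\L^\dual \to \O_{\tilde Z}$.

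It then remains to count these kernels, and here the problem trivializes. Since $\O_{\tilde Z}$ is supported at $p$, where $\L^\dual$ is free of rank one over $\O_C$, the module $\Hom_{\O_S}(\L^\dual, \O_{\tilde Z})$ is free of rank one over the local ring $\O_{\tilde Z}$. Its generators are exactly the surjections onto $\O_{\tilde Z}$: a generator has image all of $\O_{\tilde Z}$, whereas a non-unit multiple has proper image. For existence I would take any generator; its kernel $\G$ is a subsheaf of the line bundle $\L^\dual$, hence pure, so $\F := \G^\dual$ is a pure extension with no zero-dimensional torsion. For uniqueness I observe that any two generators differ by a unit of $\O_{\tilde Z}$ and that multiplication by a unit does not change the kernel; hence there is a single subsheaf $\G$, and $\F = \G^\dual$ is determined up to isomorphism.

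The crux, and the step I expect to carry the weight, is the reduction itself: once uniqueness is rephrased as ``a surjection from a line bundle onto the fixed cyclic sheaf $\O_{\tilde Z}$ has a unique kernel,'' the $\PP^1$ of classes in $\Ext^1(\O_Z, \L) \simeq \CC^2$ collapses automatically, and the cases in which $p$ is a smooth or a singular point of $C$ are handled uniformly --- the dichotomy that appears in Lemma \ref{unique_extension} as whether $\psi(p)$ vanishes never enters. The points demanding care are the verification that $\L^\dual$ is locally free of rank one at $p$ (which is where the Gorenstein property of the Cartier divisor $C$ is used) and that $\O_{\tilde Z}$ is cyclic of length two (which is where curvilinearity of the length-two scheme $Z$ is used), together with the bookkeeping that makes the long exact ${\mathcal Ext}$-sequence collapse.
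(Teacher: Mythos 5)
Your proof is correct, but it takes a genuinely different route from the paper's. The paper (following Dr\'ezet) works on the extension side: it identifies $\Ext^1_{\O_C}(\O_Z, \L)$ with $\Ext^1_{\O_S}(\O_Z, \L)$, computes the latter to be $\CC^2$ via Serre duality together with the identification ${\mathcal Tor}_1^{\O_S}(\L, \O_C) \simeq \L(-C)$, shows that a class $\sigma$ yields a sheaf with zero-dimensional torsion exactly when $\sigma$ lies in the image of $\Phi \colon \Ext^1_{\O_C}(\CC_p, \L) \to \Ext^1_{\O_C}(\O_Z, \L)$, identifies that image as the classes vanishing on the one-dimensional kernel $m_p$ of the restriction map $\Psi$, and concludes by letting $\Aut(\O_Z)$ act transitively on the complementary classes. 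You instead dualize the whole problem: pure extensions of $\O_Z$ by $\L$ correspond under $\F \mapsto \F^\dual$ to surjections $\L^\dual \to \O_{\tilde Z}$ --- the surjectivity of $\L^\dual \to \O_{\tilde Z}$ in the long ${\mathcal Ext}(-, \omega_S)$-sequence is precisely the vanishing of ${\mathcal Ext}^2_{\O_S}(\F, \omega_S)$, i.e.\ purity of $\F$, which you correctly invoke --- and all such surjections share the single kernel $\I_Z \cdot \L^\dual$, because a cyclic quotient of a rank-one module is determined by its annihilator; existence and uniqueness then drop out simultaneously, with no case distinction according to whether $p$ is smooth or singular on $C$. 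Two points you should make explicit: first, that a Cohen--Macaulay curve in a smooth surface is automatically Cartier (the depth lemma applied to $0 \to \I_C \to \O_S \to \O_C \to 0$ gives $\operatorname{depth} \I_{C,p} = 2$, so $\I_C$ is reflexive of rank one, hence invertible --- the paper uses this implicitly as well, via $\L(-C)$); second, that $\O_{\tilde Z}$ has the same annihilator as $\O_Z$, so that any morphism from $\L^\dual$ factors through $\L^\dual|_Z \simeq \O_Z$, which is what makes $\Hom_{\O_S}(\L^\dual, \O_{\tilde Z})$ free of rank one over $\O_{\tilde Z}$ and justifies the ``generators $=$ surjections'' step. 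As for what each approach buys: the paper's computation also produces the explicit identification $\Ext^1_{\O_S}(\O_Z, \L) \simeq \CC^2$ and the precise locus of torsion classes inside it, data that parallels the $\PP^1$ of extensions appearing in the hard case of Lemma \ref{unique_extension}; your argument is shorter, case-free, and visibly more general, applying verbatim to any zero-dimensional Gorenstein (in particular curvilinear) subscheme $Z \subset C$ of arbitrary length, since all it uses is that $\O_{\tilde Z} \simeq \O_Z$ is cyclic.
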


\begin{proof}
The extensions (\ref{general_extension}) on $C$ and on $S$ are the same. Indeed, by \cite[Proposition 2.2.1]{drezet_multiples}
we have the exact sequence
\[
0 \lra \Ext^1_{\O_C}(\O_Z, \L) \lra \Ext^1_{\O_S}(\O_Z, \L) \lra \Hom({\mathcal Tor}_1^{\O_S}(\O_Z, \O_C), \L).
\]
The group on the right vanishes because ${\mathcal Tor}_1^{\O_S}(\O_Z, \O_C)$ is supported on $Z$,
yet $\L$ has no zero-dimensional torsion.
From Serre duality we have
\begin{equation}
\label{general_duality}
\Ext^1_{\O_S}(\O_Z, \L) \simeq \Ext^1_{\O_S}(\L, \O_Z \tensor \omega_S)^*.
\end{equation}
Again from \cite[Proposition 2.2.1]{drezet_multiples} we have the exact sequence
\begin{multline*}
\Ext^1_{\O_C}(\L, \O_Z \tensor \omega_S) \lra \Ext^1_{\O_S}(\L, \O_Z \tensor \omega_S) \lra
\Hom({\mathcal Tor}_1^{\O_S}(\L, \O_C), \O_Z \tensor \omega_S) \\
\lra \Ext^2_{\O_C}(\L, \O_Z \tensor \omega_S).
\end{multline*}
The first and the last groups vanish, hence we obtain the functorial isomorphisms
\begin{align*}
\Ext^1_{\O_S}(\L, \O_Z \tensor \omega_S) & \simeq \Hom({\mathcal Tor}_1^{\O_S}(\L, \O_C), \O_Z \tensor \omega_S) \\
& \simeq \Hom(\L(-C), \O_Z \tensor \omega_S) \\
& \simeq \H^0((\L^*(C) \tensor \omega_S)|_{Z}) \simeq \CC^2.
\end{align*}
Now consider an extension (\ref{general_extension}) which is non-split,
and suppose that $\F$ has a zero-dimensional subsheaf ${\mathcal T}$.
Since $\L$ is torsion-free on $C$ the composition ${\mathcal T} \to \F \to \O_Z$ is injective.
There are only two non-zero subsheaves of $\O_Z$: the sheaf of sections vanishing at $p$, which is isomorphic to $\CC_p$, and $\O_Z$
itself. Since the extension is non-split, we have ${\mathcal T} = \CC_p$.
Let $\G = \F/{\mathcal T}$. We have a commutative diagram with exact rows and columns
\[
\xymatrix
{
& & 0 \ar[d] & 0 \ar[d] \\
& & \CC_p \ar@{=}[r] \ar[d] & \CC_p \ar[d] \\
0 \ar[r] & \L \ar@{=}[d] \ar[r] & \F \ar[r] \ar[d] & \O_Z \ar[r] \ar[d] & 0 \\
0 \ar[r] & \L \ar[r] & \G \ar[r] \ar[d] & \CC_p \ar[r] \ar[d] & 0 \\
& & 0 & 0
}
\]
Let $\sigma \in \Ext^1_{\O_C}(\O_Z, \L)$ correspond to extension (\ref{general_extension}) and let $\tau \in \Ext^1_{\O_C}(\CC_p, \L)$
correspond to the extension
\[
0 \lra \L \lra \G \lra \CC_p \lra 0.
\]
Consider the morphism
\[
\Phi \colon \Ext^1_{\O_C}(\CC_p, \L) \lra \Ext^1_{\O_C}(\O_Z, \L)
\]
induced by the surjective morphism $\O_Z \to \CC_p$.
It is then easy to see that $\Phi(\tau) = \sigma$ (see \cite[Proposition 4.3.1]{drezet_deformations}).
It follows that for an extension (\ref{general_extension}) associated to $\sigma$,
the sheaf $\F$ has zero-dimensional torsion if and only if $\sigma \in \operatorname{Im}(\Phi)$.

According to (\ref{general_duality}) and to the above functorial isomorphisms, $\Phi$ is the transpose of the canonical surjective
morphism
\[
\xymatrix
{
\Psi \colon \H^0((\L^*(C) \tensor \omega_S)|_{Z}) \ar[r] \ar@{=}[d] & \H^0((\L^*(C) \tensor \omega_S)|_{p}) \ar@{=}[d] \\
\CC^2 & \CC
}
\]
The kernel of $\Psi$ is the set $m_p \simeq \CC$ of sections vanishing at $p$.
Then $\sigma \in \operatorname{Im}(\Phi)$ if and only if $\sigma$ vanishes on $m_p$.
The set of extensions $\sigma$ that do not vanish on $m_p$ is non-empty.
This proves the existence part of the lemma.
It is easy to check that the group of automorphisms of $\O_Z$ acts transitively on the set of extensions $\sigma$ that do not vanish
on $m_p$. This proves the uniqueness part of the lemma.
\end{proof}

\begin{proposition}
\label{sheaves_M_0}
Let $\F$ be an extension as in (\ref{generic_extension}), without zero-dimensional torsion,
for a curve $C$ of bidegree $(2, 3)$ and a subscheme $Z \subset C$
that is the intersection of two curves of bidegree $(1, 1)$. Then $\F$ gives a point in $\MM$.
Let $\MM_0 \subset \MM$ be the subset of such sheaves $\F$.
Then $\MM_0$ is open and it can be described as the set of sheaves $\G$ having a resolution of the form
\begin{equation}
\label{M_0}
0 \lra 2\O(-1, -2) \stackrel{\f}{\lra} \O(0, -1) \oplus \O \lra \G \lra 0
\end{equation}
where $\f_{11}$ and $\f_{12}$ define a zero-dimensional subscheme of $\PP^1 \times \PP^1$.
\end{proposition}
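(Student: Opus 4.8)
The plan is to establish the three assertions of the statement---that $\F$ is stable (hence $[\F]\in\MM$), that the extension description of $\MM_0$ agrees with the one via the resolution (\ref{M_0}), and that $\MM_0$ is open---deriving the resolution first, since it drives the rest. Write the length-two scheme as $Z=V(f,g)$ for coprime $f,g\in\H^0(\O(1,1))$, so that $\I_Z=(f,g)$ and the Koszul complex gives $0\to\O(-2,-2)\to 2\O(-1,-1)\xrightarrow{(f,g)}\O\to\O_Z\to 0$. Because $Z\subset C$, the bidegree-$(2,3)$ equation $h$ of $C$ lies in $\H^0(\I_Z(2,3))=f\,\H^0(\O(1,2))+g\,\H^0(\O(1,2))$, so $h=af+bg$ for some $a,b\in\H^0(\O(1,2))$. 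I would then take
\[
\f=\left[
\begin{array}{cc}
f & g \\
-b & a
\end{array}
\right]\colon 2\O(-1,-2)\lra\O(0,-1)\oplus\O,
\]
which has $\det\f=af+bg=h$ and top row $(\f_{11},\f_{12})=(f,g)$ defining $Z$; this is the candidate resolution (\ref{M_0}).

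Next I would identify $\G:=\Coker(\f)$ with $\F$. Since $\det\f=h\neq 0$, the map $\f$ is injective, so (\ref{M_0}) is a genuine locally free resolution of length one; by Auslander--Buchsbaum $\G$ has depth at least $1$ along its support and therefore no zero-dimensional torsion, and additivity of Hilbert polynomials in (\ref{M_0}) gives $P_\G=3m+2n+1$. The summand inclusion $\O\hookrightarrow\O(0,-1)\oplus\O$ followed by the quotient onto $\G$ has image $\O_C$: its kernel is the image of multiplication by $h$, read off from the Koszul syzygy $(u,v)=\lambda(g,-f)$ of $(f,g)$. The projection onto $\O(0,-1)$ then identifies $\G/\O_C$ with $\Coker\big((f,g)\colon 2\O(-1,-2)\to\O(0,-1)\big)\simeq\O_Z$. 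Thus $\G$ is a torsion-free, hence non-split, extension of $\O_Z$ by $\O_C$; by Lemma \ref{unique_extension} (and by Lemma \ref{general_extensions} when $Z$ is non-reduced) such an extension is unique up to isomorphism, so $\G\simeq\F$. Read in reverse, the same computation shows that every cokernel of a map as in (\ref{M_0}) with coprime $\f_{11},\f_{12}$ is an extension of the required form, so the two descriptions of $\MM_0$ coincide.

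The principal obstacle is stability, which I would check directly. Let $\E\subset\F$ be a proper subsheaf; it is pure because $\F$ is. Put $\E_0=\E\cap\O_C=\Ker(\E\to\O_Z)$ and write $P_{\E_0}=r_0m+s_0n+t_0$, so that $\E/\E_0\hookrightarrow\O_Z$ has some length $t'\le 2$ and $\p(\E)=(t_0+t')/(r_0+s_0)$. If $\E_0=\O_C$ then $\E=\F$ unless $t'\le 1$, in which case $\p(\E)\le 0<1/5=\p(\F)$. If $\E_0\subsetneq\O_C$, I would bound $t_0$ by the slope estimate for subsheaves of $\O_C$ from the proof of Proposition \ref{O_C_stable}; concretely, if $C'$ is the support of $\E_0$ then $\E_0\subseteq\Ker(\O_C\to\O_{\overline{C\setminus C'}})$, whence $P_{\E_0}\le P_{\O_C}-P_{\O_{\overline{C\setminus C'}}}$. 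For every proper $C'$ this makes $t_0$ so negative that adding $t'\le 2$ still gives $\p(\E)<1/5$, with one borderline case: $C'$ a line of bidegree $(0,1)$. There the hypothesis that $Z$ is an intersection of two $(1,1)$-curves---equivalently, that $Z$ lies on no line---is decisive: purity of $\E$ forces the support of $\E/\E_0$ into $Z\cap C'$, which then has length $t'\le 1$, and the strict inequality is restored. Hence every proper $\E$ has $\p(\E)<1/5$, so $\F$ is stable and $[\F]\in\MM$.

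Finally I would show $\MM_0$ is open by exhibiting it as an intersection of open conditions. The vanishing $\H^0(\F(0,-1))=0$ is open by semicontinuity, and on that locus Propositions \ref{vanishing_M} and \ref{H^1_vanishing} give $\H^1(\F)=0$ and the canonical presentation of (\ref{generic_convergence}), all varying algebraically with $\F$; the remaining condition that $\f_{11}$ and $\f_{12}$ share no factor---equivalently that $Z$ is zero-dimensional and off every line---is open too. Intersecting these, $\MM_0$ is open in $\MM$. I expect the stability analysis, and in particular the bidegree-$(0,1)$ line estimate, to be the only genuinely delicate point, the remainder being formal once $\f$ is constructed.
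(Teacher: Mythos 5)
Your construction of resolution (\ref{M_0}) from the Koszul complex of the two $(1,1)$-forms, and the identification of $\Coker(\f)$ with the unique torsion-free extension of $\O_Z$ by $\O_C$ via Lemmas \ref{unique_extension} and \ref{general_extensions}, is correct and agrees in substance with the paper, which runs the same identification more tersely; your openness sketch is also acceptable (the paper itself defers this to the closedness of the complementary strata). Where you genuinely diverge is stability: the paper first reduces to a semi-stable destabilizing subsheaf $\E$ with $\chi(\E)=1$, lists the possible Hilbert polynomials using Corollary \ref{empty_spaces}, and kills each case by mapping the standard resolutions (\ref{(r,1)}), (\ref{(1,s)}), (\ref{(2,2)}) into (\ref{M_0}), whereas you estimate $\p(\E)$ directly via $\E_0=\E\cap\O_C$ and Lemma \ref{pacific}. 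Your case bookkeeping checks out against Table 2, but exactly in the borderline case you single out there is a genuine gap.

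The claim that ``purity of $\E$ forces the support of $\E/\E_0$ into $Z\cap C'$, which then has length $t'\le 1$'' conflates set-theoretic and scheme-theoretic support. Purity (automatic for subsheaves of $\F$) shows only that the set-theoretic support of $\E/\E_0$ lies on $C'$; it does not show that $\E/\E_0$ is annihilated by the linear form $l$ defining $C'$, i.e., that it is a subsheaf of $\O_{Z\cap C'}$. The argument therefore fails when $Z$ is a double point supported at a point $p$ of a $(0,1)$-line component $C'$ of $C$ with $Z\not\subset C'$ --- and such configurations do occur in $\MM_0$: writing $C=C'\cup C''$, the conditions $Z\subset C$ and $Z\not\subset C'$ force $p\in C'\cap C''$, a node of $C$, and any double point at such a node transverse to both rulings lies on $C$ and on no line. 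In that situation $Z\cap C'$ has length $1$, yet all of $\O_Z$ has set-theoretic support $\{p\}\subset C'$, so $t'=2$ is not excluded by your reasoning, and the candidate destabilizer with $\E_0$ the ideal sheaf of $C''$ in $\O_C$ has $P_{\E}=m+1$ and $\p(\E)=1>1/5$. Ruling it out needs an actual argument: for instance, such an $\E$ would be pure with Hilbert polynomial $m+1$, hence of multiplicity one with reduced schematic support, hence $\E\simeq\O_{C'}$; but $\Hom(\O_{C'},\F)=0$ because the one-dimensional $\H^0(\F)$ generates $\O_C\subset\F$ and multiplication by $l$ does not annihilate $\O_C$. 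This is in effect what the paper's diagram chase for the case $P_{\E}=m+1$ accomplishes: the lift $\beta\colon\O(0,-1)\to 2\O(-1,-2)$ vanishes for degree reasons, forcing $\alpha\psi=0$ and hence $\psi=0$, a contradiction. With this one repair your proof goes through.
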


\begin{proof}
Any $\Coker(\f)$ is an extension of $\O_Z$ by $\O_C$ without zero dimensional torsion,
where $Z = \{ \f_{11} = 0,\, \f_{12} = 0 \}$ and $C = \{ \det \f = 0 \}$,
hence it is the unique extension of $\O_Z$ by $\O_C$ that has no zero-dimensional torsion.
It remains to show that any sheaf $\G$ having resolution (\ref{M_0}) is semi-stable.
Assume that $\G$ had a destabilizing subsheaf $\E$. Without loss of generality we may take $\E$ to be semi-stable.
Since $\dim \H^0(\G) = 1$, we have $\chi(\E) = 1$.
According to Corollary \ref{empty_spaces}, $\E$ cannot have Hilbert polynomial $2m + 1$, $2n + 1$, or $3m + 1$.
If $P_{\E} = n + 1$, then resolution (\ref{(r,1)}) with $r = 0$ fits into the commutative diagram
\[
\xymatrix
{
0 \ar[r] & \O(-1, 0) \ar[r]^-{\psi} \ar[d]^-{\beta} & \O \ar[r] \ar[d]^-{\alpha} & \E \ar[r] \ar[d] & 0 \\
0 \ar[r] & 2 \O(-1, -2) \ar[r]^-{\f} & \O(0, -1) \oplus \O \ar[r] & \G \ar[r] & 0
}
\]
with $\alpha \neq 0$. Since $\beta = 0$ we get $\alpha \psi = 0$, hence $\psi = 0$, which yields a contradiction.
We obtain a contradiction in the same manner if $P_{\E} = m+1$, $m + n + 1$, $m + 2n + 1$.
Assume that $P_{\E} = 2m + n + 1$. Then resolution (\ref{(r,1)}) with $r = 2$ is part of the commutative diagram
\[
\xymatrix
{
0 \ar[r] & \O(-1, -2) \ar[r]^-{\psi} \ar[d]^-{\beta} & \O \ar[r] \ar[d]^-{\alpha} & \E \ar[r] \ar[d] & 0 \\
0 \ar[r] & 2 \O(-1, -2) \ar[r]^-{\f} & \O(0, -1) \oplus \O \ar[r] & \G \ar[r] & 0
}
\]
Since $\alpha_{11} = 0$ we obtain $\f_{11} \beta_{11} + \f_{12} \beta_{21} = 0$.
This contradicts the fact that $\f_{11}$ and $\f_{12}$ are linearly independent.
Assume that $P_{\E} = 3m + n + 1$. Then resolution (\ref{(r,1)}) with $r = 3$ is the first line of the commutative diagram
\[
\xymatrix
{
0 \ar[r] & \O(-1, -3) \ar[r]^-{\psi} \ar[d]^-{\beta} & \O \ar[r] \ar[d]^-{\alpha} & \E \ar[r] \ar[d] & 0 \\
0 \ar[r] & 2 \O(-1, -2) \ar[r]^-{\f} & \O(0, -1) \oplus \O \ar[r] & \G \ar[r] & 0
}
\]
Write $\beta_{11} = 1 \tensor l_1$, $\beta_{21} = 1 \tensor l_2$,
$\f_{11} = x \tensor u_1 + y \tensor v_1$, $\f_{12} = x \tensor u_2 + y \tensor v_2$.
From $\alpha_{11} = 0$ we obtain
\begin{align*}
0 & = \f_{11} (1 \tensor l_1) + \f_{12} (1 \tensor l_2), \\
0 & = x \tensor (l_1 u_1 + l_2 u_2) + y \tensor (l_1 v_1 + l_2 v_2), \\
0 & = l_1 u_1 + l_2 u_2, \quad 0 = l_1 v_1 + l_2 v_2, \\
u_1 & = a l_2, \quad u_2 = - a l_1, \quad v_1 = b l_2, \quad v_2 = - b l_1, \\
\f_{11} & = (ax + by) \tensor l_2, \quad \f_{12} = - (ax + by) \tensor l_1
\end{align*}
for some $a, b \in \CC$.
This contradicts our hypothesis that $\f_{11}$ and $\f_{12}$ define a zero-dimensional subscheme of $\PP^1 \times \PP^1$.
Assume, finally, that $P_{\E} = 2m + 2n + 1$. Then resolution (\ref{(2,2)}) fits into the commutative diagram
\[
\xymatrix
{
0 \ar[r] & \O(-2, -1) \oplus \O(-1, -2) \ar[r]^-{\psi} \ar[d]^-{\beta} & \O(-1, -1) \oplus \O \ar[r] \ar[d]^-{\alpha} & \E \ar[r] \ar[d] & 0 \\
0 \ar[r] & 2 \O(-1, -2) \ar[r]^-{\f} & \O(0, -1) \oplus \O \ar[r] & \G \ar[r] & 0
}
\]
We have $\alpha_{22} \neq 0$ because the map $\E \to \G$ is injective on global sections.
It follows that $\alpha$ is injective, otherwise $\Ker(\alpha) \simeq \O(-1, -1)$, but this cannot be a subsheaf of $\O(-2, -1) \oplus \O(-1, -2)$.
It follows that $\beta$ is injective, which is absurd.
\end{proof}

\begin{corollary}
\label{rationality}
The variety $\MM$ is rational.
\end{corollary}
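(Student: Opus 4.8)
The plan is to reduce the rationality of $\MM$ to that of its open stratum $\MM_0$ and then to identify $\MM_0$, up to birational equivalence, with a projective bundle over the Hilbert scheme of two points on the surface. Since $\MM$ is irreducible (Introduction) and $\MM_0$ is nonempty and open by Proposition \ref{sheaves_M_0}, the stratum $\MM_0$ is dense; as rationality is a birational invariant it suffices to prove that $\MM_0$ is rational. By Proposition \ref{sheaves_M_0}, $\MM_0$ is the geometric quotient of the open set of morphisms $\f \colon 2\O(-1,-2) \to \O(0,-1) \oplus \O$ whose entries $\f_{11}, \f_{12}$ cut out a length-$2$ subscheme, modulo $G = \Aut(2\O(-1,-2)) \times \Aut(\O(0,-1) \oplus \O)$. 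The assignment $\f \mapsto Z = \{\f_{11} = 0,\, \f_{12} = 0\}$ is $G$-invariant, so it descends to a morphism $\MM_0 \to \Hilb^2(\PP^1 \times \PP^1)$; concretely this is $\F \mapsto Z$ read off from the extension $0 \to \O_C \to \F \to \O_Z \to 0$ of Lemma \ref{generic_sheaves}.

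Next I would analyze the fibers of this morphism. For a general $Z$, which is then the intersection of two curves of bidegree $(1, 1)$, the supporting curve is $C = \{\det \f = 0\}$ and $\det \f = \f_{11}\f_{22} - \f_{12}\f_{21}$ runs over $\H^0(\I_Z(2, 3))$ as the second row $(\f_{21}, \f_{22})$ varies. Using the Koszul resolution of the complete intersection $Z = \{\f_{11} = \f_{12} = 0\}$, this linear map is surjective with kernel the two-dimensional space $\{(c\f_{11}, c\f_{12}) : c \in \H^0(\O(0, 1))\}$, which is exactly the orbit direction of the unipotent part of $G$ acting on the second row. By the uniqueness of the extension (Lemmas \ref{unique_extension} and \ref{general_extensions}), $\F$ is determined by the pair $(C, Z)$, so the fiber over $Z$ is $\PP(\H^0(\I_Z(2, 3))) \simeq \PP^9$.

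Over the open dense locus where $\dim \H^0(\I_Z(2, 3)) = 10$ is constant, these fibers assemble into the projectivization of a rank-$10$ vector bundle, so $\MM_0$ is birational to a Zariski-locally trivial $\PP^9$-bundle over $\Hilb^2(\PP^1 \times \PP^1)$. Now $\Hilb^2(\PP^1 \times \PP^1)$ is birational to $\operatorname{Sym}^2(\PP^1 \times \PP^1)$, whose function field $\CC(x_1, y_1, x_2, y_2)^{S_2}$ is generated by the four algebraically independent invariants $x_1 + x_2$, $x_1 x_2$, $y_1 + y_2$, $x_1 y_1 + x_2 y_2$ and is therefore purely transcendental; hence the base is rational. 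Since a projective bundle over a rational base is rational, I conclude that $\MM_0$, and therefore $\MM$, is rational.

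The main obstacle is the rigorous set-up of the projective bundle structure: one must check that $\dim \H^0(\I_Z(2, 3))$ is constant on a dense open subset so that the spaces $\H^0(\I_Z(2, 3))$ organize into a vector bundle (via cohomology and base change over the flat universal subscheme), and that the set-theoretic bijection $\F \leftrightarrow (C, Z)$ is an isomorphism of varieties. The latter is best handled through the $G$-quotient presentation of $\MM_0$ supplied by Proposition \ref{sheaves_M_0} together with the universal extension, which exhibits the correspondence as a morphism rather than merely a bijection of points. Granting these two verifications, the rationality of the base and the local triviality of the projectivization yield the conclusion at once.
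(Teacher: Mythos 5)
Your proposal is correct and takes essentially the same route as the paper: the paper's proof also passes to an open subset of $\MM_0$ (there, the locus $B$ where $Z$ consists of two distinct points) and exhibits it as a $\PP^9$-bundle, with fiber the curves of bidegree $(2,3)$ through $Z$, over an open subset of $((\PP^1 \times \PP^1)^2 \setminus \Delta)/\operatorname{S}_2$, whose rationality gives the result. You merely make explicit several steps the paper leaves implicit (the Koszul computation identifying the fiber with $\PP(\H^0(\I_Z(2,3)))$, the Zariski-local triviality via projectivization of a vector bundle, and generators of the invariant function field), which is a legitimate filling-in rather than a different argument.
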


\begin{proof}
Consider the open subset $B \subset \MM_0$ given by the condition that $Z$ consist of two distinct points.
Notice that $B$ is a bundle with fiber $\PP^9$
and base an open subset of $((\PP^1 \times \PP^1)^2 \setminus \Delta)/\operatorname{S}_2$.
Here $\Delta$ is the diagonal of the product of two copies of $\PP^1 \times \PP^1$
and $\operatorname{S}_2$ is the group of permutations of two elements.
\end{proof}

\begin{proposition}
\label{sheaves_M_1}
Let $\F$ be an extension as in (\ref{generic_extension}), that has no zero-dimensional torsion,
for a curve $C$ of bidegree $(2, 3)$ and a subscheme $Z \subset C$
that is the intersection of two curves of bidegree $(0, 2)$, respectively, $(1, 0)$.
Then $\F$ gives a point in $\MM$.
Let $\MM_1 \subset \MM$ be the subset of such sheaves $\F$.
Then $\MM_1$ is irreducible of codimension $1$
and it can be described as the set of sheaves $\G$ having a resolution of the form
\begin{equation}
\label{M_1}
0 \lra \O(-2, -1) \oplus \O(-1, -3) \stackrel{\f}{\lra} \O(-1, -1) \oplus \O \lra \G \lra 0
\end{equation}
where $\f_{11} \neq 0$ and $\f_{12} \neq 0$.
\end{proposition}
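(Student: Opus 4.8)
The plan is to follow the pattern of Proposition \ref{sheaves_M_0}, now reading the resolution (\ref{M_1}) through the dictionary $\f_{11} \leftrightarrow$ a bidegree $(1,0)$ form, $\f_{12} \leftrightarrow$ a bidegree $(0,2)$ form, and $\det \f \leftrightarrow$ a bidegree $(2,3)$ form. A direct computation with $P_{\O(i,j)} = (m+i+1)(n+j+1)$ shows that the alternating sum of Hilbert polynomials in (\ref{M_1}) equals $3m+2n+1$, so that a semi-stable $\Coker(\f)$ lands in $\MM$. I would organize the argument in three stages: the dictionary between resolutions (\ref{M_1}) and extensions (\ref{generic_extension}); semi-stability; and the geometry of $\MM_1$.

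For the first stage, fix $\f$ as in (\ref{M_1}) with $\f_{11} \neq 0$ and $\f_{12} \neq 0$. Being a nonzero $(1,0)$ form and a nonzero $(0,2)$ form, $\f_{11}$ and $\f_{12}$ are automatically coprime, so $Z = \{ \f_{11} = 0,\ \f_{12} = 0 \}$ is a complete intersection of length $2$ lying on the vertical line $\{ \f_{11} = 0 \}$; that is, $Z$ is the intersection of a curve of bidegree $(0,2)$ and a curve of bidegree $(1,0)$. Since (\ref{M_1}) is a resolution, $\f$ is injective, hence $\G = \Coker(\f)$ has projective dimension at most $1$ and ${\mathcal Ext}^2(\G, \O) = 0$, which forces $\G$ to have no zero-dimensional torsion. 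Exactly as in Proposition \ref{sheaves_M_0}, a cofactor diagram exhibits $\G$ as an extension of $\O_Z$ by $\O_C$ with $C = \{ \det \f = 0 \}$; by Lemmas \ref{general_extensions} and \ref{unique_extension} it is then the unique torsion-free extension of $\O_Z$ by $\O_C$. Conversely, given an extension $\F$ as in (\ref{generic_extension}) with $Z$ of this type, the defining form $f$ of $C$ lies in $(\I_Z)_{(2,3)} = \f_{11}\, \H^0(\O(1,3)) + \f_{12}\, \H^0(\O(2,1))$, since $\I_Z = (\f_{11}, \f_{12})$; writing $f = \f_{11} \f_{22} - \f_{12} \f_{21}$ produces a matrix $\f$ with $\Coker(\f)$ a torsion-free extension of $\O_Z$ by $\O_C$, which by uniqueness is isomorphic to $\F$. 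This identifies $\MM_1$ with the set of sheaves admitting a resolution (\ref{M_1}).

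The second stage, semi-stability of every $\G$ with resolution (\ref{M_1}), is the main obstacle. Following Proposition \ref{sheaves_M_0}, I would suppose $\G$ has a destabilizing subsheaf, take it semi-stable, reduce to Euler characteristic $1$, discard the Hilbert polynomials $2m+1$, $2n+1$, $3m+1$ by Corollary \ref{empty_spaces}, and treat the remaining cases $P_\E \in \{ n+1,\, m+1,\, m+n+1,\, m+2n+1,\, 2m+n+1,\, 3m+n+1,\, 2m+2n+1 \}$ individually. In each case the inclusion $\E \hookrightarrow \G$ lifts to a morphism of the standard resolution (\ref{(r,1)}), (\ref{(1,s)}), or (\ref{(2,2)}) of $\E$ into (\ref{M_1}); most components of the lift on the left-hand terms vanish for bidegree reasons, and the surviving ones contradict either the surjectivity of $\G \to \O_Z$ or the coprimality of $\f_{11}$ and $\f_{12}$. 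This bookkeeping is routine but is precisely where the special shape of (\ref{M_1}) enters.

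For the third stage I would parametrize $\MM_1$ by the pair $(Z, C)$. Length-$2$ subschemes supported on a vertical line are parametrized by $\PP(V_1^*) \times \PP(\operatorname{S}^2 V_2^*) = \PP^1 \times \PP^2$, the first factor recording the line $\{ \f_{11} = 0 \}$ and the second the degree-$2$ divisor cut out on it. For fixed $Z$ the curves $C \supset Z$ form $\PP((\I_Z)_{(2,3)})$, and because $\O(2,3)$ is very ample the length-$2$ scheme $Z$ imposes two independent conditions, so $\dim (\I_Z)_{(2,3)} = 12 - 2 = 10$ is constant and these spaces assemble into a $\PP^9$-bundle over $\PP^1 \times \PP^2$. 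Since the unique torsion-free extension recovers $\F$ from $(Z, C)$, this exhibits $\MM_1$ as a projective bundle over an irreducible base; hence $\MM_1$ is irreducible of dimension $9 + 3 = 12$, that is, of codimension $1$ in $\MM$.
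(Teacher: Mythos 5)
Your proposal is correct and in substance retraces the paper's own route; the main difference is packaging. What the paper presents as the proof of Proposition \ref{sheaves_M_1} is exactly your second stage: the case-by-case exclusion of a destabilizing subsheaf $\E$, taken semi-stable with $\chi(\E) = 1$, running over $P_{\E} \in \{ n+1,\, m+1,\, m+n+1,\, m+2n+1,\, 2m+n+1,\, 3m+n+1,\, 2m+2n+1 \}$ after Corollary \ref{empty_spaces} removes $2m+1$, $2n+1$, $3m+1$. Your first and third stages are not part of that proof but correctly reproduce what the paper does elsewhere: the dictionary between (\ref{M_1}) and (\ref{generic_extension}) via $f = \f_{11}\f_{22} - \f_{12}\f_{21}$ together with uniqueness of the torsion-free extension (Lemmas \ref{unique_extension} and \ref{general_extensions}) is precisely the local-section construction in the proof of Proposition \ref{geometric_quotient}, and your $\PP^9$-bundle over $\PP^1 \times \PP^2$ is the paper's $W_1/G_1 \simeq \PP(H)$. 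Your two supporting observations --- that a nonzero $(1,0)$-form and a nonzero $(0,2)$-form are automatically coprime, and that $\Coker(\f)$ is pure because it admits a length-one locally free resolution, so ${\mathcal Ext}^2(\Coker(\f), \O) = 0$ --- are both sound.

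One caution about the stage you defer as routine bookkeeping, since it constitutes the entire content of the paper's proof: your forecast of the endgames is off. Coprimality of $\f_{11}$ and $\f_{12}$ can never be contradicted (it is automatic, as you note), and the surjectivity of $\G \to \O_Z$ is never invoked. In the paper, the cases $P_{\E} = n+1$, $m+1$, $m+n+1$, $2m+n+1$ end because $\beta = 0$ for bidegree reasons, forcing $\psi = 0$ against exactness; the cases $P_{\E} = m+2n+1$ and $3m+n+1$ end with $\f_{11} = 0$, respectively $\f_{12} = 0$, i.e.\ with the bare nonvanishing hypotheses of (\ref{M_1}) --- which is exactly why those hypotheses, rather than a zero-dimensionality condition as in (\ref{M_0}), suffice here; and the case $P_{\E} = 2m+2n+1$ is different in kind: one shows $\alpha$ is injective (via injectivity on global sections of $\alpha$ and of $\alpha(1,1)$), so $\beta \colon \O(-2,-1) \oplus \O(-1,-2) \to \O(-2,-1) \oplus \O(-1,-3)$ would have to be injective, which is absurd since $\Hom(\O(-1,-2), \O(-2,-1)) = 0$ and $\Hom(\O(-1,-2), \O(-1,-3)) = 0$ force the second column of $\beta$ to vanish. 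None of these steps fails, so this is a mislabeling of targets rather than a fatal gap; but executed exactly as you predict, your second stage would stall in this last case.
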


\begin{proof}
We will show that any sheaf $\G$ having resolution (\ref{M_1}) has no destabilizing subsheaves.
Assume that $\G$ had a destabilizing subsheaf $\E$. Without loss of generality we may take $\E$ to be semi-stable.
Since $\dim \H^0(\G) = 1$, we have $\chi(\E) = 1$.
According to Corollary \ref{empty_spaces}, $\E$ cannot have Hilbert polynomial $2m + 1$, $2n + 1$, or $3m + 1$.
If $P_{\E} = n+1$, then resolution (\ref{(r,1)}) with $r = 0$ fits into the commutative diagram
\[
\xymatrix
{
0 \ar[r] & \O(-1, 0) \ar[r]^-{\psi} \ar[d]^-{\beta} & \O \ar[r] \ar[d]^-{\alpha} & \E \ar[r] \ar[d] & 0 \\
0 \ar[r] & \O(-2, -1) \oplus \O(-1, -3) \ar[r]^-{\f} & \O(-1, -1) \oplus \O \ar[r] & \G \ar[r] & 0
}
\]
with $\alpha \neq 0$. Since $\beta = 0$ we get $\alpha \psi = 0$, hence $\psi = 0$, which yields a contradiction.
We obtain a contradiction in the same manner if $P_{\E} = m+1$, $m + n + 1$, $2m + n + 1$.
Assume that $P_{\E} = m + 2n + 1$. Then resolution (\ref{(1,s)}) with $s = 2$ is part of the commutative diagram
\[
\xymatrix
{
0 \ar[r] & \O(-2, -1) \ar[r]^-{\psi} \ar[d]^-{\beta} & \O \ar[r] \ar[d]^-{\alpha} & \E \ar[r] \ar[d] & 0 \\
0 \ar[r] & \O(-2, -1) \oplus \O(-1, -3) \ar[r]^-{\f} & \O(-1, -1) \oplus \O \ar[r] & \G \ar[r] & 0
}
\]
Since $\alpha$ is injective on global sections, $\alpha$ is injective, hence $\beta$ is injective, too, and hence we may write
\[
\alpha = \left[
\begin{array}{c}
0 \\ 1
\end{array}
\right], \qquad \beta = \left[
\begin{array}{c}
1 \\ 0
\end{array}
\right]. \qquad \text{Then} \quad \f \beta = \left[
\begin{array}{l}
\f_{11} \\ \f_{21}
 \end{array}
 \right] = \alpha \psi = \left[
 \begin{array}{c}
 0 \\ \psi
 \end{array}
 \right],
 \]
 hence $\f_{11} = 0$, which contradicts our hypothesis. We obtain a contradiction in the same manner if $P_{\E} = 3m + n + 1$.
 Assume, finally, that $P_{\E} = 2m + 2n + 1$. Then resolution (\ref{(2,2)}) is the first line of the commutative diagram
\[
\xymatrix
{
0 \ar[r] & \O(-2, -1) \oplus \O(-1, -2) \ar[r]^-{\psi} \ar[d]^-{\beta} & \O(-1, -1) \oplus \O \ar[r] \ar[d]^-{\alpha} & \E \ar[r] \ar[d] & 0 \\
0 \ar[r] & \O(-2, -1) \oplus \O(-1, -3) \ar[r]^-{\f} & \O(-1, -1) \oplus \O \ar[r] & \G \ar[r] & 0
}
\]
Notice that $\alpha$ and $\alpha(1,1)$ are injective on global sections, hence $\alpha$ is injective, and hence $\beta$ is injective,
which is absurd.
\end{proof}

\noindent
Let $W_1$ be the set of morphisms $\f$ occurring in resolution (\ref{M_1}) and consider the algebraic group
\[
G_1 = \big( \Aut(\O(-2, -1) \oplus \O(-1, -3)) \times \Aut(\O(-1, -1) \oplus \O) \big)/\CC^*
\]
acting on $W_1$ by conjugation.

\begin{proposition}
\label{geometric_quotient}
The variety $\MM_1$ is isomorphic to the geometric quotient $W_1/G_1$.
Thus, $\MM_1$ is a $\PP^9$-bundle over $\PP^1 \times \PP^2$, so it is smooth and closed in $\MM$.
\end{proposition}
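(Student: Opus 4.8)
The plan is to construct the geometric quotient $W_1/G_1$ explicitly as a projective bundle and then to identify it with $\MM_1$ through the classifying map $\f \mapsto [\Coker(\f)]$. Write $A = \O(-2, -1) \oplus \O(-1, -3)$ and $B = \O(-1, -1) \oplus \O$. First I would record the Hom-groups between the summands: since $\Hom(\O(-1, -3), \O(-2, -1)) = \H^0(\O(-1, 2)) = 0$ and $\Hom(\O(-2, -1), \O(-1, -3)) = \H^0(\O(1, -2)) = 0$, the group $\Aut(A) \simeq \CC^* \times \CC^*$ is diagonal; and since $\Hom(\O, \O(-1, -1)) = \H^0(\O(-1, -1)) = 0$ while $\Hom(\O(-1, -1), \O) = \H^0(\O(1, 1))$, the group $\Aut(B)$ consists of the lower-triangular automorphisms $\left(\begin{smallmatrix} p & 0 \\ r & s \end{smallmatrix}\right)$ with $p, s \in \CC^*$ and $r \in \H^0(\O(1, 1))$. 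Thus $\dim G_1 = 2 + 6 - 1 = 7$. Writing $\f$ with entries $\f_{11}, \f_{12}, \f_{21}, \f_{22}$ and computing the conjugation action, $G_1$ acts by $\f_{11} \mapsto a^{-1} p\, \f_{11}$, $\f_{12} \mapsto d^{-1} p\, \f_{12}$, $\f_{21} \mapsto a^{-1}(r \f_{11} + s \f_{21})$, and $\f_{22} \mapsto d^{-1}(r \f_{12} + s \f_{22})$, where $\mathrm{diag}(a, d) \in \Aut(A)$.

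Next I would study the $G_1$-invariant morphism $W_1 \to \MM$, $\f \mapsto [\Coker(\f)]$ (a morphism since the cokernels form a flat family of sheaves with constant Hilbert polynomial), whose image is $\MM_1$ by Proposition \ref{sheaves_M_1}. Two points must be settled. Freeness: if $(g, h)$ fixes $\f$, the formulas above together with $\f_{11} \neq 0$, $\f_{12} \neq 0$ force $a = d = p = s$ and $r = 0$, a scalar, hence trivial in $G_1$. Fibres are $G_1$-orbits, i.e.\ resolution (\ref{M_1}) of a given $\G \in \MM_1$ is unique up to $G_1$; this is the step that needs genuine care. The naive approach of lifting $\mathrm{id}_\G$ to a morphism of resolutions fails, because $\Ext^1(B, A)$ contains $\Ext^1(\O(-1, -1), \O(-1, -3)) = \H^1(\O(0, -2)) \neq 0$ as a summand. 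Instead I would show that the terms $A$ and $B$ are canonically attached to $\G$ by the spectral sequence of Section \ref{preliminaries}, the cohomology of the twists of $\G$ pinning down the $E_1$-tableau, so that any two resolutions of the prescribed shape differ by isomorphisms of $A$ and $B$; minimality of (\ref{M_1}) --- no entry of $\f$ lies in $\H^0(\O) = \CC$ --- rules out any extra splitting ambiguity.

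Granting this, I would build the quotient fibrewise over $\PP^1 \times \PP^2 = \PP(\H^0(\O(1, 0))) \times \PP(\H^0(\O(0, 2)))$ via the $G_1$-invariant map $\f \mapsto ([\f_{11}], [\f_{12}])$. Over a fixed point, after normalizing $\f_{11}$ and $\f_{12}$, the residual stabilizer is $\CC^* \ltimes \H^0(\O(1, 1))$ acting on $(\f_{21}, \f_{22}) \in \H^0(\O(2, 1)) \oplus \H^0(\O(1, 3)) \simeq \CC^{14}$ by $(\f_{21}, \f_{22}) \mapsto \kappa (\f_{21}, \f_{22}) + (\rho \f_{11}, \rho \f_{12})$ with $\kappa \in \CC^*$, $\rho \in \H^0(\O(1, 1))$. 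Because $\f_{11}$ and $\f_{12}$ are nonzero and coprime (the first involves only $x, y$, the second only $z, w$), the map $\rho \mapsto (\rho \f_{11}, \rho \f_{12})$ is injective, and moreover $\det \f = \f_{11} \f_{22} - \f_{12} \f_{21}$ vanishes exactly when $(\f_{21}, \f_{22})$ lies in its $4$-dimensional image $T$. Thus the injectivity of $\f$ required in (\ref{M_1}) excludes precisely $T$: the translation quotient is $\CC^{14}/T \simeq \CC^{10}$, the injective locus maps to the complement of its origin, and the residual $\CC^*$ yields the fibre $(\CC^{10} \setminus \{ 0 \})/\CC^* \simeq \PP^9$. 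Globally, $T$ is the fibre of a rank-$4$ subbundle $\mathcal T$ of the trivial rank-$14$ bundle $\mathcal W$ over $\PP^1 \times \PP^2$, so that $W_1/G_1 \simeq \PP(\mathcal Q)$ with $\mathcal Q = \mathcal W/\mathcal T$ of rank $10$: a geometric quotient and a $\PP^9$-bundle.

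Finally, the classifying map descends to a bijective morphism $\PP(\mathcal Q) \to \MM_1 \subset \MM$. Its source is projective and $\MM$ is separated, so the morphism is proper and $\MM_1$ is closed in $\MM$; its tangent map is injective because first-order deformations of $\Coker(\f)$ arise from deformations of $\f$ modulo $G_1$, so the map is an immersion and hence an isomorphism onto $\MM_1$. As $\PP^1 \times \PP^2$ is smooth, so are $\PP(\mathcal Q)$ and therefore $\MM_1$. The main obstacle is the $G_1$-uniqueness of the resolution, where the non-vanishing of $\Ext^1(B, A)$ blocks the routine lifting argument and forces the spectral-sequence and minimality reasoning; once that is in place, the bundle structure and the remaining assertions are direct computations.
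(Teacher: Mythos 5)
Your explicit construction of the quotient is essentially the paper's own: the open base $U$ of pairs $(\f_{11},\f_{12})$, the rank-$4$ translation subbundle (your $\mathcal T$, the paper's $E$, with fibre $\{(\rho\f_{11},\rho\f_{12})\}$), the rank-$10$ quotient descending to $\PP^1\times\PP^2$, and the identification of the excluded locus $\{\det\f=0\}$ with $\mathcal T$ all match, and your computations of $G_1$, the action, and the stabilizers are correct. The problem is the crux step, which you correctly isolate but then leave resting on ``spectral-sequence and minimality reasoning'' that does not hold up as stated. The spectral sequence of Section \ref{preliminaries} applied to $\G\in\MM_1$ produces the four-term display (\ref{generic_convergence}), i.e.\ the extension $0\to\O_C\to\G\to\O_Z\to 0$, not resolution (\ref{M_1}); and even granting that the terms $A=\O(-2,-1)\oplus\O(-1,-3)$ and $B=\O(-1,-1)\oplus\O$ are canonically attached to $\G$, it does not follow that two resolutions with these terms differ by isomorphisms of $A$ and $B$ --- that implication is exactly what is blocked by $\Ext^1(\O(-1,-1),\O(-1,-3))\simeq\H^1(\O(0,-2))\simeq\CC\neq 0$, as you yourself observe. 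Nor is there an off-the-shelf ``uniqueness of minimal resolutions'' theorem to invoke on $\PP^1\times\PP^1$: the graded-module theory over the Cox ring does not transfer routinely to two-term sheaf resolutions here, so minimality by itself rules out nothing. So the central step of your proof is a genuine gap, not a compressed argument.

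The paper closes this gap by a different mechanism, geometric rather than homological. From $\G$ it extracts canonical data: $C=\{\det\f=0\}$ is the schematic support, and since $\dim\H^0(\G)=1$ the canonical section gives the subsheaf $\O_C\subset\G$ with quotient $\O_Z$, so $Z$ is determined; $Z=L\cap(L_1\cup L_2)$ then determines $[\f_{11}]\in\PP(V_1^*)$ and $[\f_{12}]\in\PP(\Sym^2 V_2^*)$. Choosing an equation $f$ of $C$ and factoring $f=\f_{11}\f_{22}-\f_{12}\f_{21}$ determines $(\f_{21},\f_{22})$ precisely up to translation by $\mathcal T$, so the fibres of $\f\mapsto[\Coker(\f)]$ are single $G_1$-orbits; here Lemma \ref{unique_extension} (or \ref{general_extensions}) is what guarantees $\Coker(\f)\simeq\G$, since both are torsion-free extensions of $\O_Z$ by $\O_C$ while $\Ext^1(\O_Z,\O_C)\simeq\CC^2$ contains classes with torsion. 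Run in a local flat family, the same construction yields local sections of $W_1\to\MM_1$, which locally invert the classifying map --- so the paper never needs your final claim that the tangent map of $\PP(\mathcal Q)\to\MM$ is injective. That claim, as you state it (``first-order deformations of $\Coker(\f)$ arise from deformations of $\f$ modulo $G_1$''), is likewise unproven and runs into the same nonvanishing $\Ext^1(B,A)$: showing the kernel of $\Hom(A,B)\to\Ext^1(\G,\G)$ is exactly the orbit direction requires an argument you have not supplied. I would replace your sketch by the canonical-data/factorization argument above, after which your bundle description finishes the proof exactly as in the paper.
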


\begin{proof}
The canonical map $W_1 \to \MM_1$, $\f \mapsto [\Coker(\f)]$, has local sections, and its fibers are the $G_1$-orbits,
hence it is a geometric quotient map.
We construct the local sections as follows.
Given $[\F] \in \MM_1$, let $C$ be the schematic support of $\F$, and let $Z$ be the zero-dimensional scheme of length $2$
given by the exact sequence (\ref{generic_extension}).
Then $Z = L \cap (L_1 \cup L_2)$, where $L$ is a line of bidegree $(1, 0)$, and $L_1$, $L_2$ are lines, each of bidegree $(0, 1)$.
Choose equations $\f_{11} = 0$ of $L$, $\f_{12} = 0$ of $L_1 \cup L_2$, and $f = 0$ of $C$.
Then we can write $f = \f_{11} \f_{22} - \f_{12} \f_{21}$ for some $\f_{21} \in \Sym^2 V_1^* \tensor V_2^*$, $\f_{22} \in V_1^* \tensor \Sym^3 V_2^*$.
Map $[\F]$ to the morphism represented by the matrix $(\f_{ij})_{1 \le i, j \le 2}$.
This construction can be done for a local flat family in a neighborhood of $[\F]$ in $\MM_1$.

We now describe $W_1/G_1$. Let $U \subset V_1^* \oplus \Sym^2 V_2^*$ be the open subset
\[
\{ (\f_{11}, \f_{12}),\, \f_{11} \neq 0, \f_{12} \neq 0 \}.
\]
Let $F$ be the trivial vector bundle on $U$ with fiber $(\Sym^2 V_1^* \tensor V_2^*) \oplus (V_1^* \tensor \Sym^3 V_2^*)$.
Consider the subbundle $E \subset F$ which over the point $(\f_{11}, \f_{12})$ has fiber
$(\f_{11} V_1^* \tensor V_2^*) \oplus (\f_{12} V_1^* \tensor V_2^*)$.
The quotient bundle $G = F/E$ has rank $10$ and is linearized for the canonical action of
$\CC^* \times \CC^* = \Aut(\O(-2, -1) \oplus \O(-1, -3))$ on $U$. Thus, $G$ descends to a vector bundle $H$ over
$U/\CC^* \times \CC^* = \PP(V_1^*) \times \PP(\Sym^2 V_2^*) \simeq \PP^1 \times \PP^2$.
Clearly, $\PP(H) \simeq W_1/G_1$.
\end{proof}

\begin{proposition}
\label{sheaves_M_3}
Assume that $\F$ gives a point in $\MM$ and that $\H^0(\F(0, -1)) = 0$.
Assume that the maximal minors of $\f_1$ have a common factor.
Then $\Ker(\f_1) \simeq \O(-2, -2)$ and $\Coker(\f_1) \simeq \O_L$ for a line $L \subset \PP^1 \times \PP^1$ of bidegree $(0, 1)$.
Thus, we have an extension
\begin{equation}
\label{M_3}
0 \lra \O_Q \lra \F \lra \O_L \lra 0
\end{equation}
for a quartic curve $Q \subset \PP^1 \times \PP^1$ of bidegree $(2, 2)$.
Conversely, any non-split extension of this form is semi-stable.
We have $\Ext^1_{\O_{\PP^1 \times \PP^1}}(\O_L, \O_Q) \simeq \CC^2$.
\end{proposition}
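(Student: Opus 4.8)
The plan is to first determine $\Ker(\f_1)$ and $\Coker(\f_1)$ from the four-term sequence (\ref{generic_convergence}), then read off the extension (\ref{M_3}), and finally treat the converse and the $\Ext^1$-computation separately. Throughout I use that $\f_5 \colon \Ker(\f_1) \to \O$ is injective: since $\O$ has rank $1$, this already forces $\Ker(\f_1)$ to have rank $1$, so $\f_1$ has generic rank $3$ and, being the kernel of a map of locally free sheaves, $\Ker(\f_1)$ is a line bundle $\O(a, b)$. The maximal minors $\zeta_1, \dots, \zeta_4$ of $\f_1$ still fit into the complex $0 \to \O(-2, -3) \stackrel{\zeta}{\to} 4\O(-1, -1) \stackrel{\f_1}{\to} \O(0, -1) \oplus 2\O(-1, 0)$, so $\Image(\zeta) \simeq \O(-2, -3)$ is contained in $\Ker(\f_1)$; combined with $\Ker(\f_1) \subset 4\O(-1, -1)$ this bounds $(a, b) \in \{-2, -1\} \times \{-3, -2, -1\}$.

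The crucial step, and the one I expect to be the main obstacle, is to pin down $(a, b) = (-2, -2)$. First, the common-factor hypothesis means precisely that the complex above is \emph{not} exact at $4\O(-1, -1)$, i.e.\ $\Ker(\f_1) \supsetneq \Image(\zeta) \simeq \O(-2, -3)$; this rules out $(a, b) = (-2, -3)$, which by Lemma \ref{generic_sheaves} is the generic case. For the remaining five values I invoke stability of $\F$: writing $\mathcal{G} = \Coker(\f_5) = \O/\O(a, b)$, this is the structure sheaf $\O_D$ of a curve $D$ of bidegree $(-a, -b)$ and is a subsheaf of $\F$. Computing $\p(\O_D)$ in each case, every option except $(a, b) = (-2, -2)$ gives $\p(\O_D) > 1/5 = \p(\F)$ and hence destabilizes $\F$; only $(a, b) = (-2, -2)$ survives, with $\p(\O_D) = 0$. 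Thus $\Ker(\f_1) \simeq \O(-2, -2)$ and $\mathcal{G} = \O_Q$ for a quartic $Q$ of bidegree $(2, 2)$ cut out by the section $\f_5$. By additivity of Hilbert polynomials $\Coker(\f_1)$ has $P = m + 1$ and is supported on the line $L$ of bidegree $(0, 1)$ carrying the common factor. To see $\Coker(\f_1) \simeq \O_L$ I rule out zero-dimensional torsion $T$: its preimage in $\F$ would be a subsheaf $\F'$ with $\O_Q \subset \F'$ and $\p(\F') = \operatorname{length}(T)/4$, so stability forces $\operatorname{length}(T) < 4/5$, hence $T = 0$. Splitting the four-term sequence then yields the extension (\ref{M_3}); it is non-split because a splitting would exhibit $\O_L$, of slope $1$, as a subsheaf of $\F$.

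For the converse, let $\F$ be a non-split extension (\ref{M_3}) and let $\E \subset \F$ be any proper subsheaf. Since $\O_Q$ and $\O_L$ are pure, $\F$ has no zero-dimensional torsion, and $P_\F = 3m + 2n + 1$ gives $\p(\F) = 1/5$. Set $A = \E \cap \O_Q$ and $B = \operatorname{im}(\E \to \O_L)$, so $0 \to A \to \E \to B \to 0$. Because $\O_Q$ is stable of slope $0$ (Proposition \ref{O_C_stable}), any nonzero $A \subset \O_Q$ has $\chi(A) \le 0$, strictly negative if $A \neq \O_Q$; and $B \subseteq \O_L \simeq \O_{\PP^1}$ has $\chi(B) \le 1$, with $\chi(B) \le 0$ unless $B = \O_L$. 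A short case analysis on whether $B = 0$, $B = \O_L$, or $0 \neq B \subsetneq \O_L$ — using non-splitness to exclude $A = 0,\, B = \O_L$ (which would split the sequence) and properness to exclude $A = \O_Q,\, B = \O_L$ — gives $\chi(\E) \le 0$ in every case, whence $\p(\E) \le 0 < 1/5$. Therefore $\F$ is stable, a fortiori semi-stable.

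Finally, for $\Ext^1_{\O_{\PP^1 \times \PP^1}}(\O_L, \O_Q) \simeq \CC^2$, I apply $\Hom(-, \O_Q)$ to the resolution $0 \to \O(0, -1) \to \O \to \O_L \to 0$, obtaining a long exact sequence with terms $\H^i(\O_Q)$ and $\H^i(\O_Q(0, 1))$. A K\"unneth computation using $0 \to \O(-2, -2) \to \O \to \O_Q \to 0$ gives $\chi(\O_Q) = 0$ and $\chi(\O_Q(0, 1)) = 2$, so $\chi(\O_L, \O_Q) = \chi(\O_Q) - \chi(\O_Q(0, 1)) = -2$, while the same computations yield $\H^2(\O_Q) = 0$ and $\H^1(\O_Q(0, 1)) = 0$, hence $\Ext^2(\O_L, \O_Q) = 0$. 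Since $\F \in \MM$ is supported on a curve of bidegree $(2, 3)$, the support bound forces $L \not\subseteq Q$ (otherwise $Q \cup L = Q$ would have bidegree $(2, 2)$), and purity of $\O_Q$ then gives $\Hom(\O_L, \O_Q) = 0$. Combining, $\dim \Ext^1(\O_L, \O_Q) = -\chi(\O_L, \O_Q) = 2$, as claimed.
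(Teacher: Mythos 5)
Your proof of the main assertions follows the paper's own argument quite closely: dividing the maximal minors by their greatest common divisor $g$ identifies $\Ker(\f_1) \simeq \O(i,j)$ with $(i,j) = (-2,-3) + \deg g$, and your slope computation for $\Coker(\f_5)$ is exactly the paper's Table 3 (you even cover $\deg g = (1,2)$, i.e.\ $(a,b) = (-1,-1)$, which the table omits); likewise your purity argument for $\Coker(\f_1)$ and the case analysis on $A = \E \cap \O_Q$ and $B = \operatorname{im}(\E \to \O_L)$ in the converse direction are the paper's arguments, written out in more detail. These parts are correct.

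The genuine gap is in your final paragraph, in the justification of $\Hom(\O_L, \O_Q) = 0$. You claim that $L \not\subseteq Q$ is ``forced by the support bound'', and this fails on two counts. First, the assertion $\Ext^1(\O_L, \O_Q) \simeq \CC^2$ is stated (and later needed) for \emph{arbitrary} pairs $(L, Q)$: the converse half of the proposition imposes no condition on $(L, Q)$, the uniform dimension $2$ is precisely what makes $\MM_3$ a $\PP^1$-bundle over all of $\PP^8 \times \PP^1$, and Remark \ref{flipping_loci} quotes it for the fibers of $F^0$ in a setting where $L \subset Q$ explicitly occurs --- note that in that same remark the companion group $\Ext^1(\O_Q, \O_L)$ really does jump from $\CC^2$ to $\CC^3$ when $L \subset Q$, so jumping is not an idle worry. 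Second, even in the forward direction the deduction is wrong: the bidegree-$(2,3)$ support of $\F$ is the possibly non-reduced divisor $\{ fg = 0 \}$, where $f$ cuts out $L$ and $g$ cuts out $Q$; if $f$ divides $g$ this divisor still has bidegree $(2,3)$, and such sheaves do lie in $\MM$ --- indeed, by your own converse, non-split extensions with $L \subset Q$ are stable. The conclusion nevertheless survives, and the repair is one line: in your long exact sequence, $\Hom(\O_L, \O_Q)$ is the kernel of the map $\H^0(\O_Q) \to \H^0(\O_Q(0,1))$ given by multiplication by $f$, and since $\H^0(\O_Q) = \CC \cdot 1$ and $f|_Q \neq 0$ (because $f \notin (g)$ for bidegree reasons), this kernel vanishes for every pair $(L, Q)$. (Alternatively, for $L \subset Q$ write $g = fh$: the subsheaf of $\O_Q$ annihilated by the ideal of $L$ is $(h)/(fh) \simeq \O_L(-2,-1) \simeq \O_{\PP^1}(-2)$, which has no global sections.) With this fix, your bookkeeping via $\chi(\O_L, \O_Q) = -2$ and $\Ext^2(\O_L, \O_Q) = 0$ gives $\dim \Ext^1(\O_L, \O_Q) = 2$; the paper obtains the same number slightly more directly, reading it off the four-term exact sequence $0 \to \H^0(\O_Q) \to \H^0(\O_Q(0,1)) \to \Ext^1(\O_L, \O_Q) \to \H^1(\O_Q) \to 0$.
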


\begin{proof}
Let $g = \gcd (\zeta_1, \zeta_2, \zeta_3, \zeta_4)$,
where $\zeta_1$, $\zeta_2$, $\zeta_3$, $\zeta_4$ are defined as in the proof of Lemma \ref{generic_sheaves}.
We have the exact sequence
\[
0 \lra \O(i, j) \stackrel{\eta}{\lra} 4\O(-1, -1) \stackrel{\f_1}{\lra} \O(0, -1) \oplus 2\O(-1, 0),
\]
\[
\eta = \left[
\begin{array}{cccc}
\frac{\zeta_1}{g} & -\frac{\zeta_2}{g} & \phantom{-} \frac{\zeta_3}{g} & - \frac{\zeta_4}{g}
\end{array}
\right]^\trans.
\]
The possibilities for the kernel of $\f_1$ are given in Table 3 below.

\begin{table}[ht]{Table 3. Kernel of $\f_1$.}
\begin{center}
\begin{tabular}{| c | c | c |}
\hline
$\deg (g)$ & $(i, j)$ & $P_{\Coker(\f_5)}$
\\
\hline
$(1, 0)$ & $(-1, -3)$ & $3m + n  + 1$
\\
$(0, 1)$ & $(-2, -2)$ & $2m + 2n$
\\
$(0, 2)$ & $(-2, -1)$ & $m + 2n + 1$
\\
$(1, 1)$ & $(-1, -2)$ & $2m + n + 1$
\\
\hline
\end{tabular}
\end{center}
\end{table}

\noindent
We see that the only case in which $\Coker(\f_5)$ does not destabilize $\F$ is the case $(i, j) = (-2, -2)$.
Thus, $\Ker(\f_1) \simeq \O(-2, -2)$. The cokernel of $\f_1$ has no zero-dimensional torsion and has Hilbert polynomial $m + 1$,
hence it is of the form $\O_L$ for a line $L$ of bidegree $(0, 1)$.
From sequence (\ref{generic_convergence}) we see that $\F$ is an extension of $\O_L$ by $\O_Q$.

Conversely, assume that $\F$ is such an extension. By Proposition (\ref{O_C_stable}) $\O_Q$ is stable.
Thus, for any proper subsheaf $\E \subset \F$ we have $\p(\E \cap \O_Q) < 0$ unless $\O_Q \subset \E$.
Since, obviously, $\O_L$ is stable, the image of $\E$ in $\O_L$ has slope at most $1$.
It follows that $\p(\E) < \p(\F)$, hence $\F$ is stable.
From the short exact sequence
\[
0 \lra \O(0, -1) \lra \O \lra \O_L \lra 0
\]
we get the long exact sequence
\begin{align*}
0 = \operatorname{Hom}(\O_L, \O_Q) \lra & \H^0(\O_Q) \simeq \CC \lra \H^0(\O_Q(0, 1)) \simeq \CC^2 \\
\lra \Ext^1(\O_L, \O_Q) \lra & \H^1(\O_Q) \simeq \CC \lra \H^1(\O_Q(0, 1)) = 0.
\end{align*}
This proves that $\Ext^1(\O_L, \O_Q) \simeq \CC^2$.
\end{proof}

\noindent
Let $\MM_2 \subset \MM$ be the subset of sheaves having resolution (\ref{M_2}).
Clearly, $\MM_2 \simeq \PP^{11}$.
Let $\MM_3 \subset \MM$ be the subset of extension sheaves $\F$ as in (\ref{M_3}).
Clearly, $\MM_3$ is a bundle with base $\PP^8 \times \PP^1$ and fiber $\PP^1$.
Thus, $\MM_3$ is closed of codimension $3$.
It intersects $\MM_2$ along a subvariety isomorphic to $\PP^8 \times \PP^1$ consisting of twisted structure sheaves
$\O_C(0, 1)$, where $C = Q \cup L$.
The subvarieties $\MM_0$, $\MM_1$, $\MM_2 \cup \MM_3$ form a decomposition of $\MM$
and satisfy the properties from Theorem \ref{main_theorem}.


\section{Variation of moduli of $\alpha$-semi-stable pairs}
\label{variation}

Let $X$ be a separated scheme of finite type over $\CC$.
An \emph{algebraic system} on $X$ is a triple $\Lambda = (\Gamma, \sigma, \F)$ consisting of an $\O_X$-module $\F$,
a vector space $\Gamma$ over $\CC$, and a $\CC$-linear map $\sigma \colon \Gamma \to \H^0(\F)$.
If $\F$ is a coherent $\O_X$-module and $\Gamma$ is finite dimensional, we say that $\Lambda$ is a \emph{coherent system}.
A \emph{pair} will be a coherent system in which $\sigma$ is injective and $\dim \Gamma = 1$.
A morphism of algebraic systems $(\gamma, \f) \colon (\Gamma, \sigma, \F) \to (\Gamma', \sigma', \F')$
consists of a $\CC$-linear map $\gamma \colon \Gamma \to \Gamma'$ together with a morphism of $\O_X$-modules
$\f \colon \F \to \F'$, which are compatible, in the sense that $\H^0(\f) \sigma = \sigma' \gamma$.
These notions were introduced in \cite{lepotier_asterisque} and \cite{he} where appropriate semi-stability conditions
of coherent systems were defined, which led in a natural manner to the construction of moduli spaces.
The category of algebraic systems on $X$ is abelian and, according to \cite[Th\'eor\`eme 1.3]{he}, it has enough injectives.
Thus, we can define the left derived functors of $\Hom(\Lambda, -)$, denoted $\Ext^i(\Lambda, -)$.
Our basic tool for computing these extension spaces is \cite[Corollaire 1.6]{he}, which we quote below.

\begin{proposition}
\label{ext_sequence}
Let $\Lambda = (\Gamma, \sigma, \F)$ and $\Lambda' = (\Gamma', \sigma', \F')$ be two algebraic systems on $X$ with
$\sigma'$ injective. Then there is a long exact sequence
\begin{align*}
0 & \lra \Hom(\Lambda, \Lambda') \lra \Hom(\F, \F') \lra \Hom(\Gamma, \H^0(\F')/\Gamma') \\
& \lra \Ext^1(\Lambda, \Lambda') \lra \Ext^1(\F, \F') \lra \Hom(\Gamma, \H^1(\F')) \\
& \lra \Ext^2(\Lambda, \Lambda') \lra \Ext^2(\F, \F') \lra \Hom(\Gamma, \H^2(\F')).
\end{align*}
\end{proposition}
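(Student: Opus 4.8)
The plan is to compute the groups $\Ext^i(\Lambda, \Lambda')$ as the right derived functors of the left-exact functor $\Lambda' \mapsto \Hom(\Lambda, \Lambda')$ and to evaluate them on a single injective resolution of $\Lambda'$, using the two exact forgetful functors $\Lambda' \mapsto \F'$ and $\Lambda' \mapsto \Gamma'$. The point of departure is the tautological description of a morphism of algebraic systems: $\Hom(\Lambda, \Lambda')$ is the kernel of
\[
d \colon \Hom(\Gamma, \Gamma') \oplus \Hom(\F, \F') \lra \Hom(\Gamma, \H^0(\F')), \qquad d(\gamma, \f) = \sigma' \gamma - \H^0(\f) \sigma .
\]
I would promote this two-term complex of functors to the derived level, the three functors involved being $\Hom(\Gamma, -)$ composed with $\Lambda' \mapsto \Gamma'$, then $\Hom(\F, -)$ composed with $\Lambda' \mapsto \F'$, and finally $\Hom(\Gamma, -)$ composed with $\H^0$ of $\Lambda' \mapsto \F'$.

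First I would identify a convenient class of injectives. For an $\O_X$-module $E$ put $\widehat E = (\H^0(E), \mathrm{id}, E)$; a morphism $\Lambda' \to \widehat E$ is the same datum as a morphism $\F' \to E$ of $\O_X$-modules, the linear part being forced to equal $\H^0$ of it composed with $\sigma'$, so $E \mapsto \widehat E$ is right adjoint to the exact forgetful functor $\Lambda' \mapsto \F'$. Consequently $\widehat E$ is injective whenever $E$ is, and since every $\Lambda'$ embeds into $\widehat E$ for an embedding $\F' \hookrightarrow E$ into an injective module — the induced linear map is injective because $\sigma'$ is injective and $\H^0$ is left exact — the objects $\widehat E$ form a cogenerating family of injectives. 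On each $\widehat E$ the map $d$ is surjective, its component $\gamma \mapsto \gamma$ coming from the structure map $\mathrm{id}$ already being so; hence $d$ is surjective on every injective, these being direct summands of products of the $\widehat E$.

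Next I would fix an injective resolution $\Lambda' \to J^\bullet$ in the category of algebraic systems, which exists by \cite[Th\'eor\`eme 1.3]{he}. The forgetful functor $\Lambda' \mapsto \F'$ is exact and preserves injectives, being right adjoint to the exact functor $E \mapsto (0, 0, E)$, so its image is an injective resolution $\G^\bullet$ of $\F'$; this computes both $\Ext^i(\F, \F')$ through $\Hom(\F, -)$ and the sheaf cohomology $\H^i(\F')$ through global sections. The exact forgetful functor $\Lambda' \mapsto \Gamma'$ turns $J^\bullet$ into a resolution $\Gamma^\bullet$ of $\Gamma'$, so $\Hom(\Gamma, -)$ produces $\Hom(\Gamma, \Gamma')$ in degree $0$ and nothing above. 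Because $d$ is surjective on each $J^j$, applying the functors to $J^\bullet$ yields a short exact sequence of complexes
\[
0 \lra \Hom(\Lambda, J^\bullet) \lra \Hom(\Gamma, \Gamma^\bullet) \oplus \Hom(\F, \G^\bullet) \xrightarrow{\ d\ } \Hom(\Gamma, \H^0(\G^\bullet)) \lra 0 ,
\]
whose long exact cohomology sequence, after the three identifications above, carries the terms $\Ext^i(\Lambda, \Lambda')$, then $\Hom(\Gamma, \Gamma') \oplus \Ext^i(\F, \F')$ with the first summand present only for $i = 0$, and then $\Hom(\Gamma, \H^i(\F'))$.

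Finally I would pass to the stated form using that $\sigma'$ is injective. In the degree-zero portion the subspace $\Hom(\Gamma, \Gamma')$ maps isomorphically, via $\gamma \mapsto \sigma' \gamma$, onto $\Hom(\Gamma, \Gamma') \subset \Hom(\Gamma, \H^0(\F'))$; cancelling this acyclic part replaces $\Hom(\Gamma, \H^0(\F'))$ by $\Hom(\Gamma, \H^0(\F')/\Gamma')$, removes the summand $\Hom(\Gamma, \Gamma')$, and identifies the resulting map $\Hom(\F, \F') \to \Hom(\Gamma, \H^0(\F')/\Gamma')$ with $\f \mapsto [\H^0(\f) \sigma]$. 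This produces precisely the displayed long exact sequence. The main obstacle is the homological bookkeeping of the middle steps — confirming the adjunctions and exactness, that the $\widehat E$ give enough injectives on which $d$ is surjective, and that the two forgetful functors compute the advertised derived functors; granting these, the long exact sequence and its final reduction are formal. All of this is contained in \cite[Corollaire 1.6]{he}, which we apply directly.
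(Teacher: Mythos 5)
The paper offers no proof of this proposition at all: it is quoted verbatim from \cite[Corollaire 1.6]{he}, which is exactly the citation you fall back on in your last sentence. Judged as a reconstruction of He's result, your argument is essentially correct and follows the natural route: realize $\Hom(\Lambda, \Lambda')$ as the kernel of $d(\gamma, \f) = \sigma' \gamma - \H^0(\f) \sigma$, resolve $\Lambda'$ by injective systems, exploit the two forgetful functors (both exact, since kernels and cokernels of systems are computed componentwise using the left exactness of $\H^0$), note that $\Lambda' \mapsto \F'$ preserves injectives as the right adjoint of the exact functor $E \mapsto (0, 0, E)$ and that injective $\O_X$-modules are flasque, so global sections of the resolution compute $\H^i(\F')$, and finally cancel the acyclic summand $\Hom(\Gamma, \Gamma')$ in degree zero using the injectivity of $\sigma'$. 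The degree-zero reduction and the identification of the map $\Hom(\F, \F') \to \Hom(\Gamma, \H^0(\F')/\Gamma')$ as $\f \mapsto [\H^0(\f)\sigma]$ are both carried out correctly, and your adjunction $\Hom(\Lambda', \widehat{E}) \simeq \Hom(\F', E)$ for $\widehat{E} = (\H^0(E), \mathrm{id}, E)$ is right.

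One claim, however, is false as stated, though harmless after a one-line repair: the objects $\widehat{E}$ do \emph{not} cogenerate the category of algebraic systems. Every morphism $\Lambda \to \widehat{E}$ has linear part $\H^0(\f)\sigma$ and therefore kills the subobject $(\ker \sigma, 0, 0)$, so a system whose structure map is not injective cannot embed in a product of objects $\widehat{E}$. Such systems necessarily occur in your construction even though $\sigma'$ is injective: the cokernel of $\Lambda' \hookrightarrow \widehat{E^0}$ has linear part $\H^0(E^0)/\Gamma'$ (identifying $\Gamma'$ with its image), and its structure map has kernel $\H^0(\F')/\Gamma'$, which is generally nonzero; so your assertion that every injective system is a direct summand of a product of objects $\widehat{E}$ — on which you rely to get $d$ surjective on each $J^j$ — is not justified. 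The repair: enlarge the family by the systems $(V, 0, 0)$ for $V$ a vector space. These are injective, since $\Hom((\Gamma, \sigma, \F), (V, 0, 0)) \simeq \Hom(\Gamma, V)$ exhibits this functor as the composition of the exact forgetful functor $\Lambda \mapsto \Gamma$ with the exact functor $\Hom_{\CC}(-, V)$; together with the $\widehat{E}$ they do cogenerate, so every injective system is a summand of a product of members of the enlarged family. On $(V, 0, 0)$ the target of $d$ is $\Hom(\Gamma, \H^0(0)) = 0$, so $d$ is trivially surjective there, surjectivity passes to products and summands, and your short exact sequence of complexes, the resulting long exact sequence, and the final reduction all go through unchanged.
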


From now on we specialize to the case when $X = \PP^1 \times \PP^1$ with fixed polarization $\O(1, 1)$,
and $\F$ has dimension $1$ with Hilbert polynomial $P_{\F}(m, n) = rm + sn + t$.
Let $\alpha$ be a positive rational number.
We define the \emph{slope} of a coherent system $\Lambda = (\Gamma, \sigma, \F)$ relative to $\alpha$ and to the fixed
polarization
\[
\p_{\alpha}(\Lambda) = \frac{\dim \Gamma}{r + s} \alpha + \frac{t}{r + s}.
\]
We say that $\Lambda$ is \emph{$\alpha$-semi-stable} (respectively \emph{$\alpha$-stable})
if $\F$ has no zero-dimensional torsion, $\sigma$ is injective, and for any proper coherent subsystem $\Lambda' \subset \Lambda$
we have $\p_{\alpha}(\Lambda') \le \p_{\alpha}(\Lambda)$ (respectively $\p_{\alpha}(\Lambda') < \p_{\alpha}(\Lambda)$).
According to \cite{he}, for fixed polynomial $P$ and $\alpha \in {\mathbb Q}_{> 0}$ there is a coarse moduli space
$\operatorname{Syst}_{X, \alpha}(P)$ parametrizing S-equivalence classes of $\alpha$-semi-stable coherent systems
$(\Gamma, \F)$ such that $P_{\F} = P$. We have a decomposition of $\operatorname{Syst}_{X, \alpha}(P)$
into disjoint components according to $\dim \Gamma$.
The component corresponding to the case $\dim \Gamma = 1$, i.e. parametrizing $\alpha$-semi-stable pairs
with fixed Hilbert polynomial $P$, will be denoted $\M^{\alpha}(P)$.

A value $\alpha_0$ is said to be \emph{regular} relative to $P$ if it is contained in an interval $(\alpha_1, \alpha_2)$
such that the set of $\alpha$-semi-stable pairs with Hilbert polynomial $P$ remains unchanged as $\alpha$ varies in
$(\alpha_1, \alpha_2)$.
If there is no such interval we say that $\alpha_0$ is a \emph{wall} relative to $P$.
The following proposition is analogous to \cite[Lemma 3.1]{choi_chung}.

\begin{proposition}
\label{walls}
Relative to $P(m, n) = 3m + 2n + 1$ we have only one wall at $\alpha = 4$.
\end{proposition}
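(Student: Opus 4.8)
The plan is to locate the walls by analysing, for each potential destabilizing subsystem, the single value of $\alpha$ at which the $\alpha$-slope inequality becomes an equality. Recall that for a pair $\Lambda = (\Gamma, \sigma, \F)$ with $\dim \Gamma = 1$ and $P_\F = 3m + 2n + 1$ we have $r + s = 5$ and $t = 1$, so $\p_\alpha(\Lambda) = \alpha/5 + 1/5$. A coherent subsystem $\Lambda' = (\Gamma', \sigma', \E)$ is determined by a subsheaf $\E \subset \F$ together with $\Gamma' = \Gamma \cap \sigma^{-1}(\H^0(\E))$, so $\dim \Gamma' \in \{0, 1\}$; its slope is $\p_\alpha(\Lambda') = (\dim \Gamma')\alpha/(r' + s') + t'/(r' + s')$, where $P_\E = r'm + s'n + t'$. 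A wall occurs precisely at a value $\alpha$ where $\p_\alpha(\Lambda') = \p_\alpha(\Lambda)$ for some subsheaf $\E$ that can actually occur inside a sheaf of $\MM$, and where the strict inequality reverses as $\alpha$ crosses this value.

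First I would set up the wall equation. If $\dim \Gamma' = 0$, the inequality $\p_\alpha(\Lambda') \le \p_\alpha(\Lambda)$ reads $t'/(r'+s') \le (\alpha + 1)/5$; since the left side is independent of $\alpha$ and the right side increases with $\alpha > 0$, crossing can only create walls coming from the $\dim \Gamma' = 1$ case, where $\sigma$ factors through $\E$. In that case the wall equation is
\[
\frac{\alpha + t'}{r' + s'} = \frac{\alpha + 1}{5},
\]
which solves to $\alpha = (5t' - (r' + s'))/((r' + s') - 5)$. So a wall can only come from a subsheaf $\E$ with $r' + s' < 5$ (to keep $\alpha$ positive when $5t' > r' + s'$), equivalently of bidegree strictly smaller than $(2,3)$ in the appropriate sense. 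I would then enumerate the finitely many Hilbert polynomials $r'm + s'n + t'$ that a semi-stable subsheaf carrying the section can have: these are governed by the curve classes appearing in the support, and by Corollary \ref{empty_spaces} several of them are excluded outright. The surviving candidates are constrained further by the requirement that $\E$ embed in some $\F \in \MM$ with $\H^0(\E) \supset \sigma(\Gamma)$; the structure results of Section \ref{classification} (Lemmas \ref{generic_sheaves}, \ref{sheaves_M_3} and Propositions \ref{sheaves_M_0}, \ref{sheaves_M_1}) pin down which subsheaves genuinely arise.

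Next I would compute $\alpha$ for each surviving candidate and check which values are positive and actually realized by a destabilizing subsystem on one side of the wall. The expectation, matching \cite[Lemma 3.1]{choi_chung}, is that all the admissible candidates yield the same value $\alpha = 4$. Concretely, a subsheaf $\O_C \subset \F$ with $C$ of bidegree $(2,2)$ has $P_{\O_C} = 2m + 2n$, so $r' + s' = 4$ and $t' = 0$; this would give $\alpha = (0 - 4)/(4 - 5) = 4$, and this is exactly the destabilizing configuration of type $\MM_3$ from Proposition \ref{sheaves_M_3}, where $\F$ is an extension of $\O_L$ by $\O_Q$ with $Q$ of bidegree $(2,2)$. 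I would verify that no other admissible subsheaf produces a distinct positive wall value, so that $\alpha = 4$ is the unique wall.

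The main obstacle will be the bookkeeping of candidate subsheaves: I must ensure the list of Hilbert polynomials is exhaustive and that each candidate is tested for genuine realizability inside $\MM$, rather than merely being numerically compatible. In particular I must confirm that numerically borderline cases (such as subsheaves with $\dim \Gamma' = 0$, or subsheaves of bidegree $(1,2)$ or $(1,3)$) do not produce an extra wall, which requires invoking the semi-stability of $\F$ together with the vanishing statements of Section \ref{cohomology} to rule them out. Once every candidate is checked and shown to yield either no positive $\alpha$ or exactly $\alpha = 4$, the proposition follows.
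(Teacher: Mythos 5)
Your overall strategy coincides with the paper's (set up the wall equation $\tfrac{\alpha+t'}{r'+s'}=\tfrac{\alpha+1}{5}$ for the piece carrying the section and enumerate numerical solutions), but the single step that makes the enumeration close is missing, and your proposed substitute for it would fail. The paper's key move is: without loss of generality the section generates the subsheaf $\F'$ away from finitely many points, so $\F'$ contains the structure sheaf $\O_C$ of a curve of bidegree $(s',r')$ with zero-dimensional quotient, whence $t' \ge r'+s'-r's'$. Since positivity of $\alpha = \bigl((r'+s')-5t'\bigr)/\bigl(5-(r'+s')\bigr)$ with $r'+s'<5$ forces $5t' < r'+s'$, i.e.\ $t'\le 0$ (your stated condition ``$5t' > r'+s'$'' has the sign backwards), and since $r'+s'-r's' \ge 1$ for every admissible $(r',s') \ne (2,2)$, the bound kills every candidate except $(r',s',t')=(2,2,0)$, which gives $\alpha=4$. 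You never identify this mechanism; instead you propose to prune candidates by requiring that $\E$ embed into some $\F \in \MM$, invoking Corollary \ref{empty_spaces} and the classification of Section \ref{classification}. That is the wrong criterion: walls are detected by strictly $\alpha$-semi-stable \emph{pairs}, whose underlying sheaves need not be semi-stable as sheaves --- at $\alpha=4$ the relevant polystable pair is $(\Gamma,\O_Q)\oplus(0,\O_L)$, with underlying sheaf $\O_Q\oplus\O_L \notin \MM$ --- so realizability inside sheaves of $\MM$ neither implies nor is implied by occurrence at a wall (and leaning on Section \ref{classification} is in any case circular in spirit, since the wall-crossing analysis is meant to recover $\MM$). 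Without the generation bound, the equation has spurious positive solutions, e.g.\ $(r',s',t')=(1,1,0)$ gives $\alpha=2/3$ and $(2,2,-1)$ gives $\alpha=9$, and nothing in your outline excludes them.

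A second genuine flaw is your dismissal of the $\dim\Gamma'=0$ case by monotonicity. A wall is any $\alpha$ at which the set of semi-stable pairs changes, in either direction: a section-free subsheaf with $t''/(r''+s'') > 1/5$ destabilizes exactly for $\alpha < 5t''/(r''+s'')-1$, and crossing that value is a wall. Indeed, at $\alpha=4$ the pairs in the flipping locus $F^\infty$ are destabilized for $\alpha<4$ precisely by the section-free subsheaf $\O_L$, with $\p(\O_L)=1=(\alpha+1)/5$; your argument, taken literally, would declare that no wall at all. The correct reduction to the section-carrying case goes through complements (as the paper does by allowing $\Lambda'$ to be a subpair \emph{or a quotient pair}): at equality of $\alpha$-slopes the complementary piece carries the section, has the same slope, and has Hilbert polynomial $(3-r'')m+(2-s'')n+(1-t'')$, so it is caught by the same equation together with the bound $t \ge r+s-rs$. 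In summary: the conclusion $\alpha=4$ is right and the framework is the paper's, but the two load-bearing steps --- the passage to section-generated pieces (or quotient pairs) and the resulting lower bound on $t'$ --- are absent, and the replacements you propose would not make the candidate list finite or correct.
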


\begin{proof}
According to the proof of \cite[Th\'eor\`eme 4.2]{he}, $\alpha$ is a wall if and only if there is a strictly $\alpha$-semi-stable
pair $\Lambda = (\Gamma, \F)$. There is a pair $\Lambda' = (\Gamma', \F') \neq \Lambda$ which is a subpair of $\Lambda$
or a quotient pair such that $\p_{\alpha}(\Lambda') = \p_{\alpha}(\Lambda)$.
Write $P_{\F'}(m, n) = rm + sn + t$ with $r \le 3$, $s \le 2$.
We have the equation
\begin{equation}
\label{alpha}
\frac{\alpha + t}{r + s} = \frac{\alpha + 1}{5}.
\end{equation}
Without loss of generality we may assume that $\Gamma'$ generates $\F'$ away, possibly, from finitely many points.
Thus $t \ge r + s - rs$.
The case when $r = 3$, $s = 2$ is unfeasible. Assume that $r = 2$, $s = 2$, $t \ge 0$.
Equation (\ref{alpha}) becomes $\alpha = 4 - 5 t$, which has solution $\alpha = 4$ when $t = 0$.
For all other choices of $r$ and $s$ we have $t \ge 1$, hence equation (\ref{alpha}) has no positive solution.
\end{proof}

\noindent
We write $\MM^{\alpha} = \M^{\alpha}(3m + 2n + 1)$. The moduli spaces $\MM^{\alpha}$ remain unchanged
as $\alpha$ varies in the interval $(0, 4)$ and will be denoted $\MM^{0+}$.
Likewise, for $\alpha \in (4, \infty)$, $\MM^{\alpha}$ are all equal to a moduli space denoted $\MM^{\infty}$.
These moduli spaces are related by the flipping diagram
\[
\xymatrix
{
\MM^{\infty} \ar[dr]_-{\rho_{\infty}} & & \MM^{0+} \ar[ld]^-{\rho_0} \\
& \MM^4
}
\]
in which the maps $\rho_{\infty}$ and $\rho_0$ are induced by the inclusion of sets of $\alpha$-semi-stable pairs.
In particular, $\rho_{\infty}$ and $\rho_0$ are birational.

The following proposition is a particular case of \cite[Proposition B.8]{pandharipande_thomas}.

\begin{proposition}
\label{M^infinity}
The variety $\MM^{\infty}$ is isomorphic to the flag Hilbert scheme of
zero-dimensional subschemes of length $2$ contained in curves of bidegree $(2, 3)$ in $\PP^1 \times \PP^1$.
\end{proposition}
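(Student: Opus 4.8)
The plan is to recognize $\MM^{\infty}$ as a moduli space of Pandharipande--Thomas stable pairs and then to match these pairs with flags $Z \subset C$, invoking \cite[Proposition B.8]{pandharipande_thomas} for the scheme-theoretic statement. First I would unwind what a point of $\MM^{\infty}$ is: an $\alpha$-stable pair $(\Gamma, \sigma, \F)$ with $\dim \Gamma = 1$ and $\alpha \in (4, \infty)$, equivalently the datum of a one-dimensional sheaf $\F$ with $P_{\F}(m,n) = 3m + 2n + 1$ and no zero-dimensional torsion, together with a nonzero section $s \colon \O \to \F$ taken up to scalar. The key reduction is to show that, for $\alpha$ large, $\alpha$-stability of $(\Gamma, \sigma, \F)$ is equivalent to $\mathrm{coker}(s)$ being zero-dimensional, i.e. to $(\F, s)$ being a stable pair in the sense of Pandharipande and Thomas.

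To establish this equivalence I would argue with slopes. The image of $s$ is the structure sheaf $\O_{C'}$ of its schematic support $C'$, a curve of bidegree $(r', s')$ with $r' \le 2$, $s' \le 3$. The subpair $(\Gamma, \sigma, \O_{C'})$ has slope $\p_{\alpha} = (\alpha + \chi(\O_{C'}))/(r'+s')$, which for $\alpha \to \infty$ exceeds $\p_{\alpha}(\F) = (\alpha+1)/5$ unless $r' + s' = 5$; hence stability forces $C' = C$, the full support, so that $\mathrm{coker}(s)$ is zero-dimensional. Conversely, once $\mathrm{coker}(s)$ is zero-dimensional, any subpair carrying the section has full support of bidegree $(2,3)$ and the constant-term makes its slope strictly smaller, while subpairs with $\Gamma' = 0$ have slopes bounded above (boundedness of subsheaves of a fixed sheaf) and are dominated by $(\alpha+1)/5$ for $\alpha$ large. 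By Proposition \ref{walls} there is no wall above $\alpha = 4$, so stability is independent of $\alpha$ in the chamber $(4,\infty)$ and the large-$\alpha$ analysis describes all of $\MM^{\infty}$.

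Next I would set up the bijection with the flag Hilbert scheme. Given a stable pair $(\F, s)$, let $C$ be the schematic support of $\F$; since $\mathrm{im}(s) = \O_C$ and $\chi(\F) - \chi(\O_C) = 1 - (-1) = 2$, the cokernel $\O_Z = \F/\O_C$ is an $\O_C$-module of length $2$ and hence defines a subscheme $Z \subset C$ of length $2$, producing a flag $Z \subset C$. In the reverse direction, given a flag $Z \subset C$ with $C$ of bidegree $(2,3)$ and $Z$ of length $2$, Lemma \ref{unique_extension} furnishes a unique torsion-free extension $\F$ of $\O_Z$ by $\O_C$, and composing $\O \to \O_C$ with the inclusion $\O_C \hookrightarrow \F$ gives the section $s$, canonical up to scalar. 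I would then check that these two assignments are mutually inverse on closed points.

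The remaining, and main, obstacle is to promote this bijection on points to an isomorphism of schemes, that is, to match the two universal families together with their scheme structures rather than just their underlying sets. This is precisely the content of \cite[Proposition B.8]{pandharipande_thomas}, whose hypotheses (purity of $\F$ and zero-dimensionality of $\mathrm{coker}(s)$) have just been verified in our situation; applying it yields $\MM^{\infty} \simeq \Hilb$, the flag Hilbert scheme of length-$2$ subschemes contained in curves of bidegree $(2,3)$ in $\PP^1 \times \PP^1$.
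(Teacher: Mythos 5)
Your proposal is correct and follows essentially the same route as the paper, whose entire proof is the citation of \cite[Proposition B.8]{pandharipande_thomas}; your slope computation and appeal to Proposition \ref{walls}, identifying $\alpha$-stability for $\alpha \in (4, \infty)$ with the Pandharipande--Thomas condition that $\operatorname{coker}(s)$ be zero-dimensional, is precisely the routine verification that the paper leaves implicit in the words ``particular case.'' One small caution: a length-$2$ module need not a priori be cyclic, so the step ``$\F/\O_C$ is of length $2$ and hence defines a subscheme $Z \subset C$'' deserves a word --- it does hold here, since in the Pandharipande--Thomas correspondence the cokernel is ${\mathcal Ext}^1_{\O_C}(\O_Z, \O_C)$, which for length $2$ is again isomorphic to $\O_Z$ --- but as the scheme-theoretic isomorphism is in any case delegated to \cite[Proposition B.8]{pandharipande_thomas}, this does not affect the correctness of your argument.
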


\noindent
In particular, $\MM^{\infty}$ is a bundle with base $\Hilb_{\PP^1 \times \PP^1}(2)$ and fiber $\PP^9$, so it is smooth.
This proposition gives another proof for the fact that $\MM$ is rational (Corollary \ref{rationality}).

\begin{remark}
\label{flipping_base}
From the proof of Proposition \ref{walls}, we see that the S-equivalence type of a strictly $\alpha$-semi-stable pair in
$\MM^4$ is of the form $(\Gamma, \E) \oplus (0, \O_L)$, where $(\Gamma, \E) \in \M^{0+}(2m + 2n)$
and $L \subset \PP^1 \times \PP^1$ is a line of bidegree $(0, 1)$.
As in the proof of Proposition \ref{vanishing_M}, $\E$ has a subsheaf isomorphic to the structure sheaf of a curve.
By semi-stability, the curve must have bidegree $(2, 2)$.
We see that $\E \simeq \O_Q$ for a quartic curve $Q \subset \PP^1 \times \PP^1$ of bidegree $(2, 2)$.
Thus, $\M^{0+}(2m + 2n) \simeq \PP^8$.
\end{remark}

Let $F^{\infty} \subset \MM^{\infty}$ and $F^0 \subset \MM^{0+}$ be the flipping loci, that is, the inverse images under
$\rho_{\infty}$, respectively, under $\rho_0$ of $\M^{0+}(2m + 2n) \times \M(m + 1)$.
The fiber of $F^{\infty}$ over $(\Lambda_1, \Lambda_2)$ is $\PP(\Ext^1(\Lambda_1, \Lambda_2))$.
The fiber of $F^0$ over $(\Lambda_1, \Lambda_2)$ is $\PP(\Ext^1(\Lambda_2, \Lambda_1))$.

\begin{remark}
\label{flipping_loci}
The flipping locus $F^{\infty}$ is a projective bundle with fiber $\PP^2$ and base $\M^{0+}(2m + 2n) \times \M(m + 1)$.
The flipping locus $F^0$ is a $\PP^1$-bundle with the same base.
Indeed, take $\Lambda_1 = (\Gamma, \O_Q) \in \M^{0+}(2m + 2n)$ and $\Lambda_2 = (0, \O_L) \in \M(m+1)$.
Proposition \ref{ext_sequence} yields the exact sequence
\begin{align*}
0 & \lra \Hom(\Lambda_1, \Lambda_2) \lra \Hom(\O_Q, \O_L) \lra \Hom(\Gamma, \H^0(\O_L)) \simeq \CC \\
& \lra \Ext^1(\Lambda_1, \Lambda_2) \lra \Ext^1(\O_Q, \O_L) \lra \Hom(\Gamma, \H^1(\O_L)) = 0.
\end{align*}
Any morphism $\Lambda_1 \to \Lambda_2$ is zero because $\Gamma = \H^0(\O_Q)$ generates $\O_Q$.
If $L \nsubseteq Q$, then $\Hom(\O_Q, \O_L) = 0$; if $L \subset Q$, then $\Hom(\O_Q, \O_L) \simeq \CC$.
From the short exact sequence
\[
0 \lra \O(-2, -2) \lra \O \lra \O_Q \lra 0
\]
we get the long exact sequence
\begin{align*}
0 & \lra \Hom(\O_Q, \O_L) \lra \H^0(\O_L) \simeq \CC \lra \H^0(\O_L(2, 2)) \simeq \CC^3 \\
& \lra \Ext^1(\O_Q, \O_L) \lra \H^1(\O_L) = 0.
\end{align*}
Thus, if $L \nsubseteq Q$, then $\Ext^1(\O_Q, \O_L) \simeq \CC^2$; if $L \subset Q$, then $\Ext^1(\O_Q, \O_L) \simeq \CC^3$.
In either case we get $\Ext^1(\Lambda_1, \Lambda_2) \simeq \CC^3$.

We will now verify the isomorphism $\Ext^1(\Lambda_2, \Lambda_1) \simeq \CC^2$.
From Proposition \ref{ext_sequence} we have the exact sequence
\[
0 = \Hom(0, \H^0(\O_Q)/\Gamma) \to \Ext^1(\Lambda_2, \Lambda_1) \to \Ext^1(\O_L, \O_Q) \to \Hom(0, \H^1(\O_Q)) = 0
\]
Thus, the middle arrow is an isomorphism.
From Proposition \ref{sheaves_M_3} we know that $\Ext^1(\O_L, \O_Q) \simeq \CC^2$.
\end{remark}

\begin{lemma}
\label{ext^2}
For $\Lambda \in F^0$ we have $\Ext^2(\Lambda, \Lambda) = 0$.
\end{lemma}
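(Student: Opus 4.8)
The plan is to unwind the definition of $F^0$, reduce the vanishing of the pair-$\Ext^2$ to a sheaf-level computation via Proposition \ref{ext_sequence}, and then verify that sheaf-level $\Ext^2$ vanishes for every $\Lambda$ lying over a flipping point. Recall that $F^0 \subset \MM^{0+}$ sits over $\M^{0+}(2m+2n) \times \M(m+1)$, so a point $\Lambda \in F^0$ is a pair $(\Gamma, \F)$ with $\F$ a (possibly strictly semi-stable) sheaf whose S-equivalence type is $\O_Q \oplus \O_L$, i.e. $\F$ fits in an extension of $\O_L$ by $\O_Q$ with $\Gamma = \H^0(\F)$ the one-dimensional space coming from $\H^0(\O_Q)$. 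The first step will be to apply Proposition \ref{ext_sequence} with $\Lambda' = \Lambda$ to obtain the tail
\[
\Ext^1(\F, \F) \lra \Hom(\Gamma, \H^1(\F)) \lra \Ext^2(\Lambda, \Lambda) \lra \Ext^2(\F, \F) \lra \Hom(\Gamma, \H^2(\F)).
\]
Since $\F$ is one-dimensional on a surface, $\H^2(\F) = 0$, so $\Hom(\Gamma, \H^2(\F)) = 0$. Hence $\Ext^2(\Lambda, \Lambda) = 0$ will follow once I show two things: that $\Ext^2(\F, \F) = 0$, and that the map $\Ext^1(\F, \F) \to \Hom(\Gamma, \H^1(\F)) = \H^1(\F)$ (using $\dim\Gamma = 1$) is surjective.

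For the vanishing $\Ext^2_{\O_{\PP^1\times\PP^1}}(\F, \F) = 0$, I would use Serre duality on the quadric surface: $\Ext^2(\F,\F) \simeq \Hom(\F, \F \tensor \omega)^*$, where $\omega = \O(-2,-2)$. So I must show $\Hom(\F, \F(-2,-2)) = 0$. Since $\F$ is a one-dimensional sheaf of slope $\p(\F) = 1/5$ (coming from $P_\F = 2m+2n$ giving slope $0$ for $\O_Q$, but more precisely working with the extension and the slope $t/(r+s)$), twisting by $\O(-2,-2)$ strictly lowers the slope, so any nonzero map $\F \to \F(-2,-2)$ would force a destabilizing sub/quotient comparison. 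Concretely, $\F(-2,-2)$ has strictly smaller slope than $\F$, and both sheaves are semi-stable (they are S-equivalent to $\O_Q \oplus \O_L$, whose Jordan-Hölder factors are stable with equal slope), so every nonzero homomorphism between semi-stable sheaves of the same slope would have to be between the factors — but $\F(-2,-2)$ has strictly smaller slope, ruling out nonzero maps. I would make this precise by comparing the slope $\p(\F) = 0$ of the degree-$(2m+2n)$ piece against $\p(\F(-2,-2))$, which is strictly negative, and invoking the standard fact that $\Hom$ between semi-stable sheaves vanishes when the source has strictly larger slope.

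The remaining and most delicate point is the surjectivity of $\Ext^1(\F, \F) \to \H^1(\F)$. Here I expect the main obstacle to lie, because $\F$ is strictly semi-stable (it is genuinely an extension, not a general stable sheaf) and $\H^1(\F) \neq 0$ in general for such $\F$ — indeed $\F$ lies over the wall, so $\H^1(\F) \simeq \H^1(\O_Q) \simeq \CC$ by the computation in Proposition \ref{sheaves_M_3}. To prove surjectivity I would examine the connecting map in the long exact sequence of Proposition \ref{ext_sequence} and identify its cokernel with the obstruction to lifting the cosection $\sigma \colon \Gamma \to \H^0(\F)$; alternatively, I would unwind the extension structure $0 \to \O_Q \to \F \to \O_L \to 0$ and trace how $\H^1(\F) = \H^1(\O_Q)$ is hit by a suitable self-extension class. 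A cleaner route may be to argue directly that $\Ext^2(\Lambda, \Lambda)$ is the obstruction space for deformations of the pair $\Lambda$ at a smooth point of the relevant moduli space; since $F^0$ is, by Remark \ref{flipping_loci}, a smooth $\PP^1$-bundle over a smooth base $\M^{0+}(2m+2n) \times \M(m+1) \simeq \PP^8 \times \PP^1$, and since $\MM^{0+}$ itself is smooth wherever the construction applies, the pairs in $F^0$ are unobstructed, which forces $\Ext^2(\Lambda,\Lambda) = 0$. I would present the deformation-theoretic argument as the main line and keep the Serre-duality computation of $\Ext^2(\F,\F)$ as the key technical input, flagging the surjectivity onto $\H^1(\F)$ as the step requiring the most care.
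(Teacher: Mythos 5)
Your reduction is sound as far as it goes: applying Proposition \ref{ext_sequence} with $\Lambda' = \Lambda$, using $\H^2(\F) = 0$, and killing $\Ext^2(\F, \F) \simeq \Hom(\F, \F \tensor \omega)^*$ by Serre duality and slopes is correct. (A side correction: for $\Lambda \in F^0$ the sheaf $\F$ is a non-split extension of $\O_L$ by $\O_Q$, hence \emph{stable} as a sheaf by Proposition \ref{sheaves_M_3} — it lies in $\MM_3$; it is the \emph{pair} that becomes strictly semi-stable at the wall, and the sheaf-level factors $\O_Q$, $\O_L$ have slopes $0$ and $1$, not equal slopes, so your talk of sheaf S-equivalence to $\O_Q \oplus \O_L$ is off, though the conclusion $\Hom(\F, \F(-2,-2)) = 0$ stands since $\p(\F) = 1/5 > -9/5$. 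Also, $\H^1(\F) \simeq \CC$ does not hold on all of $F^0$: generically the connecting map $\H^0(\O_L) \to \H^1(\O_Q)$ is nonzero and $\H^1(\F) = 0$, so $\Ext^2(\Lambda, \Lambda) \hookrightarrow \Ext^2(\F,\F) = 0$ there for free; the nontrivial locus is exactly $\F \simeq \O_C(0,1)$, $C = Q \cup L$, i.e.\ $\MM_2 \cap \MM_3$.)

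The genuine gap is the step you yourself flag: the surjectivity of $\Ext^1(\F, \F) \to \Hom(\Gamma, \H^1(\F))$ on that locus is never proved, and your proposed main line for it is circular. In the paper, Lemma \ref{ext^2} is precisely the input used in the proof of Theorem \ref{wall_crossing} to establish that $\MM^{0+}$ is smooth (via the criterion: stable point with $\Ext^2(\Lambda, \Lambda) = 0$ is a smooth point), so you cannot invoke smoothness of $\MM^{0+}$ to deduce $\Ext^2(\Lambda, \Lambda) = 0$; and smoothness of $F^0$, a proper subvariety, says nothing about obstructions of $\Lambda$ in the ambient moduli problem. The implication is wrong even in principle: smoothness of a moduli space at a point does not force the obstruction space $\Ext^2$ to vanish. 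The paper avoids all of this by splitting $\Lambda$ along its pair-level Jordan--H\"older factors, $0 \to \Lambda_1 \to \Lambda \to \Lambda_2 \to 0$ with $\Lambda_1 = (\Gamma, \O_Q)$ and $\Lambda_2 = (0, \O_L)$, and showing $\Ext^2(\Lambda_i, \Lambda_j) = 0$ for all $i, j$. Three of the four vanishings are immediate from Proposition \ref{ext_sequence} plus Serre duality and slope comparisons of stable sheaves; the one delicate surjectivity, $\Ext^1(\O_Q, \O_Q) \to \Hom(\Gamma, \H^1(\O_Q)) \simeq \CC$, follows from a dimension count: $\Ext^1(\Lambda_1, \Lambda_1)$ is the tangent space of $\M^{0+}(2m+2n) \simeq \PP^8$ at $\Lambda_1$, hence $8$-dimensional, while $\Ext^1(\O_Q, \O_Q) \simeq \CC^9$. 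The smoothness used there is the concrete identification $\M^{0+}(2m+2n) \simeq \PP^8$ from Remark \ref{flipping_base} — a small, already-understood moduli space — not the smoothness of $\MM^{0+}$, so there is no circularity. To salvage your direct approach you would need an honest computation of the map $\Ext^1(\F, \F) \to \H^1(\F)$ for $\F \simeq \O_C(0,1)$, $C = Q \cup L$; the factor-by-factor reduction is the cleaner route.
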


\begin{proof}
We have a non-split exact sequence
\[
0 \lra \Lambda_1 \lra \Lambda \lra \Lambda_2 \lra 0
\]
for some $\Lambda_1 = (\Gamma, \O_Q) \in \M^{0+}(2m + 2n)$ and $\Lambda_2 = (0, \O_L) \in \M(m + 1)$.
It is enough to show that $\Ext^2(\Lambda_i, \Lambda_j) = 0$ for $i, j = 1, 2$.
From Proposition \ref{ext_sequence} we have the exact sequence
\[
0 = \Hom(\Gamma, \H^1(\O_L)) \lra \Ext^2(\Lambda_1, \Lambda_2) \lra
\Ext^2(\O_Q, \O_L) \simeq \Hom(\O_L, \O_Q \tensor \omega)^*.
\]
The group on the right vanishes because $\O_L$ is stable, by Proposition \ref{O_C_stable} $\O_Q \tensor \omega$ is stable
and $\p(\O_L) > \p(\O_Q \tensor \omega)$. Thus, $\Ext^2(\Lambda_1, \Lambda_2) = 0$.
The exact sequence
\[
0 = \Hom(0, \H^1(\O_Q)) \to \Ext^2(\Lambda_2, \Lambda_1)
\lra \Ext^2(\O_L, \O_Q) \simeq \Hom(\O_Q, \O_L \tensor \omega)^* = 0
\]
shows that $\Ext^2(\Lambda_2, \Lambda_1) = 0$.
We have the exact sequence
\begin{align*}
0 = \Hom(\Gamma, & \H^0(\O_Q)/\Gamma) \\
\lra & \Ext^1(\Lambda_1, \Lambda_1) \lra \Ext^1(\O_Q, \O_Q) \lra
\Hom(\Gamma, \H^1(\O_Q)) \simeq \CC \\
\lra & \Ext^2(\Lambda_1, \Lambda_1) \lra \Ext^2(\O_Q, \O_Q) \simeq \Hom(\O_Q, \O_Q \tensor \omega)^* = 0.
\end{align*}
The space $\Ext^1(\Lambda_1, \Lambda_1)$ is isomorphic to the tangent space of $\M^{0+}(2m+2n) \simeq \PP^8$
at $\Lambda_1$, so it is isomorphic to $\CC^8$.
From the short exact sequence
\[
0 \lra \O(-2, -2) \lra \O \lra \O_Q \lra 0
\]
we get the long exact sequence
\begin{align*}
0 \lra \Hom(\O_Q, \O_Q) \simeq \CC & \lra \H^0(\O_Q) \simeq \CC \lra \H^0(\O_Q(2,2)) \simeq \CC^8 \\
\lra \Ext^1(\O_Q, \O_Q) & \lra \H^1(\O_Q) \simeq \CC \lra \H^1(\O_Q(2, 2)) = 0.
\end{align*}
Thus $\Ext^1(\O_Q, \O_Q) \simeq \CC^9$. We get the vanishing of $\Ext^2(\Lambda_1, \Lambda_1)$.
Finally, from the exact sequence
\[
0 = \Hom(0, \H^1(\O_L)) \lra \Ext^2(\Lambda_2, \Lambda_2) \lra \Ext^2(\O_L, \O_L) \simeq \Hom(\O_L, \O_L \tensor \omega)^* = 0
\]
we get the vanishing of $\Ext^2(\Lambda_2, \Lambda_2)$.
\end{proof}

\noindent
The following theorem is analogous to \cite[Theorem 3.3]{choi_chung}.

\begin{theorem}
\label{wall_crossing}
Let $\MM^{\alpha}$ be the moduli space of $\alpha$-semi-stable pairs on $\PP^1 \times \PP^1$ with Hilbert polynomial
$P(m, n) = 3m + 2n + 1$.
We have the following commutative diagram expressing the variation of $\MM^{\alpha}$ as $\alpha$ crosses the wall:
\[
\xymatrix
{
& \widetilde{\MM} \ar[dl]_-{\beta_{\infty}} \ar[dr]^-{\beta_0} \\
\MM^{\infty} \ar[dr]_-{\rho_{\infty}} & & \MM^{0+} \ar[ld]^-{\rho_0} \\
& \MM^4
}
\]
Here $\beta_{\infty}$ is the blow-up with center $F^{\infty}$ and $\beta_0$ is the blow-down contracting the exceptional divisor
$\widetilde{F}$ in the direction of $\PP^2$, where we regard $\widetilde{F}$ as a $\PP^2 \times \PP^1$-bundle over
$\M^{0+}(2m + 2n) \times \M(m + 1)$.
\end{theorem}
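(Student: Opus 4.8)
The plan is to follow the wall-crossing method of Choi and Chung \cite{choi_chung}: exhibit $\widetilde{\MM}$ as the blow-up of $\MM^{\infty}$ along $F^{\infty}$, analyze the exceptional divisor, and then contract it in the opposite direction to recover $\MM^{0+}$. I first record the global geometry. By Proposition \ref{walls} the only strictly semi-stable pairs on the wall are the S-equivalence classes $(\Gamma,\O_Q)\oplus(0,\O_L)$, so both $\rho_{\infty}$ and $\rho_0$ are isomorphisms over $\MM^4\setminus B$, where $B = \M^{0+}(2m+2n)\times\M(m+1)\simeq\PP^8\times\PP^1$ (Remark \ref{flipping_base}), and they contract the flipping loci onto $B$. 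For $\alpha>4$ the pairs lying over a point of $B$ are the non-split extensions $0\to\Lambda_2\to\Lambda\to\Lambda_1\to0$ with $\Lambda_1=(\Gamma,\O_Q)$, $\Lambda_2=(0,\O_L)$, so $F^{\infty}$ has fibre $\PP(\Ext^1(\Lambda_1,\Lambda_2))\simeq\PP^2$ (Remark \ref{flipping_loci}); for $\alpha<4$ they are the extensions $0\to\Lambda_1\to\Lambda\to\Lambda_2\to0$, so $F^0$ has fibre $\PP(\Ext^1(\Lambda_2,\Lambda_1))\simeq\PP^1$.

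Next I would assemble the smoothness inputs. By Proposition \ref{M^infinity} the space $\MM^{\infty}$ is smooth, and $F^{\infty}$ is a smooth $\PP^2$-bundle over $B$, hence $\widetilde{\MM}:=\operatorname{Bl}_{F^{\infty}}\MM^{\infty}$ is smooth and $\beta_{\infty}$ is by construction the blow-up with centre $F^{\infty}$. On the other side, Lemma \ref{ext^2} gives $\Ext^2(\Lambda,\Lambda)=0$ for every $\Lambda\in F^0$, so the obstructions vanish and $\MM^{0+}$ is smooth along $F^0$; away from $F^0$ it is isomorphic to $\MM^{\infty}\setminus F^{\infty}$ and hence smooth as well. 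A dimension count ($\dim\MM=13$, $\dim B=9$, $\dim F^{\infty}=11$, $\dim F^0=10$) shows that $N_{F^{\infty}/\MM^{\infty}}$ has rank $2$ and that the exceptional divisor $\widetilde{F}=\PP(N_{F^{\infty}/\MM^{\infty}})$ is a divisor in $\widetilde{\MM}$.

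The heart of the argument, and the step I expect to be the main obstacle, is the identification of $N_{F^{\infty}/\MM^{\infty}}$ together with the resulting product structure of $\widetilde{F}$. At a point $[\Lambda]\in F^{\infty}$ the tangent space is $\Ext^1(\Lambda,\Lambda)$; the extension $0\to\Lambda_2\to\Lambda\to\Lambda_1\to0$ induces a filtration of $\Ext^1(\Lambda,\Lambda)$ whose diagonal part and one \emph{off-diagonal} summand $\Ext^1(\Lambda_1,\Lambda_2)$ account for the deformations of $B$ together with the motion of $[\Lambda]$ inside its $\PP^2$-fibre, while the complementary off-diagonal summand $\Ext^1(\Lambda_2,\Lambda_1)\simeq\CC^2$ (Remark \ref{flipping_loci}, Proposition \ref{sheaves_M_3}) supplies the normal directions. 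Since the extension class defining $[\Lambda]$ spans the tautological line $\O(-1)\subset\Ext^1(\Lambda_1,\Lambda_2)$, a Yoneda-composition computation, as in the analogous situation in \cite{choi_chung}, should give, fibrewise over $\PP^2=\PP(\Ext^1(\Lambda_1,\Lambda_2))$, an isomorphism $N_{F^{\infty}/\MM^{\infty}}\simeq\Ext^1(\Lambda_2,\Lambda_1)\otimes\O_{\PP^2}(-1)$. As $\Ext^1(\Lambda_2,\Lambda_1)$ has constant rank $2$ over $B$, projectivizing this twist of a trivial bundle yields $\PP(N)|_{\PP^2}\simeq\PP^2\times\PP^1$, so $\widetilde{F}$ is a $\PP^2\times\PP^1$-bundle over $B$, as claimed. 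Controlling this twist carefully, so that the $\PP^1$-factor is genuinely constant along each $\PP^2$, is the delicate point.

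Finally I would carry out the blow-down. On the exceptional divisor one has $\O_{\widetilde{\MM}}(\widetilde{F})|_{\widetilde{F}}\simeq\O_{\PP(N)}(-1)$, whose restriction to each fibre $\PP^2\times\{\text{pt}\}$ is $\O_{\PP^2}(-1)$ by the description of $N$ above. The Fujiki--Nakano contraction criterion then produces a smooth projective variety $\MM'$ and a morphism $\beta_0\colon\widetilde{\MM}\to\MM'$ contracting the $\PP^2$-factors of $\widetilde{F}$, so that $\beta_0(\widetilde{F})$ is a $\PP^1$-bundle over $B$. To identify $\MM'$ with $\MM^{0+}$ I would perform an elementary modification of the universal pair along $\widetilde{F}$, replacing the $\alpha$-stable structure for $\alpha>4$ by the $\alpha$-stable structure for $\alpha<4$; this family induces a morphism $\MM'\to\MM^{0+}$ which is birational and bijective between smooth varieties, hence an isomorphism, under which $\beta_0(\widetilde{F})$ corresponds to $F^0$. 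Commutativity of the diagram and the agreement of the induced map with $\rho_0$ then follow from the modular interpretation, completing the proof.
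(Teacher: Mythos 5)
Your proposal is correct in substance and reaches the theorem by a more self-contained route than the paper, which essentially outsources the hard step. The paper's proof cites \cite[Theorem 3.3]{choi_chung}, where the morphism $\beta_0 \colon \widetilde{\MM} \to \MM^{0+}$ contracting $\widetilde{F}$ in the $\PP^2$-directions is already constructed (by exactly the elementary-modification procedure you sketch); the only new content in the paper is then the smoothness of $F^0$ (Remark \ref{flipping_loci}) and of $\MM^{0+}$ (the deformation-theoretic smoothness criterion combined with Lemma \ref{ext^2}), after which the Universal Property of the blow-up \cite[p. 604]{griffiths_harris} identifies $\beta_0$ as the smooth blow-down with center $F^0$ and exceptional divisor $\widetilde{F}$. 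You instead establish the contraction intrinsically: the fibrewise identification $N_{F^{\infty}/\MM^{\infty}}|_{\PP^2} \simeq \Ext^1(\Lambda_2, \Lambda_1) \tensor \O_{\PP^2}(-1)$, the Fujiki--Nakano criterion to produce a smooth $\MM'$ (your check that $\O_{\widetilde{\MM}}(\widetilde{F})$ restricts to $\O_{\PP^2}(-1)$ on each $\PP^2 \times \{\mathrm{pt}\}$ is right), and the modified universal family plus Zariski's main theorem to show $\MM' \simeq \MM^{0+}$. Both routes rest on the same two inputs, Lemma \ref{ext^2} and the $\Ext^1$ computations of Remark \ref{flipping_loci}, and your dimension counts are correct. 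What your approach buys is an explicit proof of the $\PP^2 \times \PP^1$-bundle structure of $\widetilde{F}$, which the paper obtains only implicitly from the two blow-down descriptions; what it costs is precisely the verifications you flag, namely the Yoneda computation of the normal bundle, the flatness and $\alpha$-stability for $\alpha < 4$ of the modified family, and the descent of the classifying map $\widetilde{\MM} \to \MM^{0+}$ through the contraction (constancy of the modified pairs along the contracted $\PP^2$'s, i.e.\ a rigidity argument) --- all of which constitute the content of \cite[Theorem 3.3]{choi_chung} that the paper cites rather than reproves. Note also that the paper's ordering quietly avoids one wrinkle in yours: constructing the morphism to $\MM^{0+}$ first and invoking the universal property sidesteps both the a priori only analytic output of Fujiki--Nakano and the factorization step through $\MM'$.
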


\begin{proof}
At \cite[Theorem 3.3]{choi_chung} a birational map $\beta_0$ is constructed
from the blow-up $\widetilde{\MM}$ of $\MM^{\infty}$ along $F^{\infty}$ to $\MM^{0+}$,
which contracts $\widetilde{F}$ in the $\PP^2$-directions.
Note that $\beta_0$ gives an isomorphism on the complement of $F^0$
and the preimages of points in $F^0$ are isomorphic to $\PP^2$.
By Remark \ref{flipping_loci}, $F^0$ is smooth.
We claim that $\MM^{0+}$ is also smooth.
This can be verified using the smoothness criterion for moduli spaces of $\alpha$-semi-stable pairs:
if $\Lambda$ gives a stable point of $\MM^{0+}$ and $\Ext^2(\Lambda, \Lambda) = 0$, then $\Lambda$ gives a smooth point.
It is enough to take $\Lambda \in F^0$ and then we can apply Lemma \ref{ext^2}.
We can now apply the Universal Property of the blow-up \cite[p. 604]{griffiths_harris}, to conclude that $\beta_0$ is a blow-up
with center $F^0$ and exceptional divisor $\widetilde{F}$.
\end{proof}

\noindent
The following proposition is analogous to \cite[Proposition 4.4]{choi_chung}. We define the \emph{forgetful morphism}
$\phi \colon \MM^{0+} \to \MM$ by $\phi (\Gamma, \F) = [\F]$.

\begin{proposition}
\label{contraction}
The forgetful morphism $\phi \colon \MM^{0+} \to \MM$ is a blow-up of $\MM$ along $\MM_2$.
\end{proposition}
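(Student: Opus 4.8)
The plan is to follow the same pattern used for $\beta_0$ in Theorem \ref{wall_crossing}: to exhibit $\phi$ as a projective birational morphism of smooth varieties that is an isomorphism away from $\MM_2$ and whose fibres over $\MM_2$ are projective lines, and then to invoke the universal property of the blow-up. First I would check that $\phi$ is a well-defined morphism and compute its fibres. The universal pair on $\MM^{0+}$ yields a flat family of sheaves with Hilbert polynomial $3m + 2n + 1$; for $\alpha \in (0, 4)$ an $\alpha$-stable pair $(\Gamma, \F)$ has $\F$ semi-stable, and with this Hilbert polynomial semi-stable sheaves are stable, so $[\F]$ is a point of $\MM$ and the family is classified by $\phi$. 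Conversely, a point of $\phi^{-1}([\F])$ is a line $\Gamma \subset \H^0(\F)$; since $\F$ is stable, Proposition \ref{walls} (which locates the only wall at $\alpha = 4$) shows that every such line gives an $\alpha$-stable pair for $\alpha \in (0, 4)$, and two lines give isomorphic pairs precisely when they coincide. Hence $\phi^{-1}([\F]) \simeq \PP(\H^0(\F))$. By Propositions \ref{vanishing_M} and \ref{H^1_vanishing} we have $\H^1(\F) = 0$, so $\dim \H^0(\F) = 1$, for $\F \in \MM \setminus \MM_2$, whereas $\dim \H^1(\F) = 1$, so $\dim \H^0(\F) = 2$, for $\F \in \MM_2$. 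Thus $\phi$ is bijective over $\MM \setminus \MM_2$ and restricts to a $\PP^1$-fibration over $\MM_2$.

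Next I would assemble the hypotheses for the blow-up criterion. The variety $\MM$ is smooth and $\MM_2 \simeq \PP^{11}$ is smooth of codimension $2$ by Theorem \ref{main_theorem}, while $\MM^{0+}$ is smooth by the argument given in the proof of Theorem \ref{wall_crossing}. Since $\phi$ is a projective birational morphism between smooth varieties that is bijective over the open set $\MM \setminus \MM_2$, it is an isomorphism there. Therefore $\phi$ is an isomorphism over $\MM \setminus \MM_2$, it contracts the divisor $E = \phi^{-1}(\MM_2)$ onto $\MM_2$, and its fibres over $\MM_2$ are copies of $\PP^1 = \PP^{c-1}$ with $c = 2$ the codimension of $\MM_2$. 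Exactly as for $\beta_0$ in Theorem \ref{wall_crossing}, the universal property of the blow-up \cite[p.~604]{griffiths_harris} then identifies $\phi$ with the blow-up of $\MM$ along $\MM_2$.

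I expect the delicate points to be two. The first is to confirm that, over a point $[\F] \in \MM_2$ (where $\F \simeq \O_C(0,1)$), the \emph{entire} line $\PP(\H^0(\F))$ consists of $\alpha$-stable pairs, so that the fibre is genuinely all of $\PP^1$; this rests on the stability of $\F$ together with the wall analysis of Proposition \ref{walls}, which guarantees that no subpair destabilises for $\alpha \in (0, 4)$. The second, which is the main obstacle if one wishes to avoid the black-box criterion, is to verify that $\phi|_E$ realises $E$ as the projectivised normal bundle $\PP(N_{\MM_2/\MM})$ with the correct contraction direction. I would handle this by deformation theory: over $\MM$ the cohomology of the universal sheaf is computed by a two-term complex of bundles $K^0 \to K^1$, and $\MM_2$ is the locus where its rank drops by one, so that $N_{\MM_2/\MM}$ is canonically $\Hom(\H^0(\F), \H^1(\F))$ at $[\F]$; its projectivisation is $\PP^1$ since $\dim \H^1(\F) = 1$, consistent with the expected codimension $\dim \H^0(\F)\cdot\dim \H^1(\F) = 2$. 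Comparing the deformation theory of the pair with that of the sheaf through the exact sequence of Proposition \ref{ext_sequence} then matches $\PP(\H^0(\F))$ with this projectivised normal space and fixes the direction of contraction, after which the universal property applies verbatim.
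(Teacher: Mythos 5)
Your proposal is correct and follows essentially the same route as the paper: the paper likewise observes that $\MM^{0+}$, $\MM$, and $\MM_2$ are smooth, that $\phi$ identifies pairs with sheaves away from $\MM_2$ (since $\H^0(\F) \simeq \CC$ there), that $\phi^{-1}([\F]) = \PP(\H^0(\F)) \simeq \PP^1$ over $\MM_2$, and then invokes the universal property of the blow-up from \cite[p.~604]{griffiths_harris}. Your closing discussion of identifying the exceptional divisor with $\PP(N_{\MM_2/\MM})$ via $\Hom(\H^0(\F), \H^1(\F))$ goes beyond what the paper verifies, but it is not needed once the cited criterion is accepted.
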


\begin{proof}
We will give a simpler argument then the one found at \cite[Proposition 4.4]{choi_chung}.
As seen in the proof of Theorem \ref{wall_crossing}, $\MM^{0+}$ is smooth.
The varieties $\MM$ and $\MM_2$ are also smooth.
Away from $\MM_2$, $\phi$ is an isomorphism because, by Theorem \ref{main_theorem}, for $\F \in \MM \setminus \MM_2$
we have $\H^0(\F) \simeq \CC$, hence we may identify $\F$ with the $\alpha$-stable pair $(\H^0(\F), \F)$ for sufficiently small $\alpha$.
For $\F \in \MM_2$, $\phi^{-1}([\F]) = \PP(\H^0(\F)) \simeq \PP^1$.
By the Universal Property of the blow-up \cite[p. 604]{griffiths_harris}, $\phi$ is a blow-up with center $\MM_2$.
\end{proof}

\noindent
{\bf Proof of Theorem \ref{poincare_polynomial}.}
The integral homology groups of $\MM$ have no torsion because $\MM^{\infty}$ enjoys this property and $\MM$ is obtained
from $\MM^\infty$ by a sequence of blow-ups and blow-downs.
By Theorem \ref{wall_crossing},
\[
\Poly(\MM^{0+}) = \Poly(\MM^{\infty}) + (\Poly(\PP^1) - \Poly(\PP^2)) \Poly (\M^{0+}(2m + 2n) \times \M(m + 1)).
\]
By Proposition \ref{M^infinity} and Remark \ref{flipping_base},
\[
\Poly(\MM^{0+}) = \Poly(\PP^9) \Poly(\Hilb_{\PP^1 \times \PP^1}(2)) + (\Poly(\PP^1) - \Poly(\PP^2)) \Poly(\PP^8) \Poly(\PP^1).
\]
According to \cite[Theorem 0.1]{goettsche},
\[
\Poly(\Hilb_{\PP^1 \times \PP^1}(2))(\xi) = \xi^4 + 3 \xi^3 + 6 \xi^2 + 3 \xi + 1.
\]
In view of Proposition \ref{contraction},
\[
\Poly(\MM) = \Poly(\MM^{0+}) - \xi \Poly(\MM_2) = \Poly(\MM^{0+}) - \xi \Poly(\PP^{11}).
\]
In conclusion,
\[
\Poly(\MM) = \frac{\xi^{10} - 1}{\xi - 1} (\xi^4 + 3 \xi^3 + 6 \xi^2 + 3 \xi + 1) - \xi^2 \frac{\xi^9 - 1}{\xi - 1} (\xi + 1) - \xi \frac{\xi^{12} - 1}{\xi - 1}.
\]

\noindent \\
{\bf Acknowledgements.}
The author would like to thank Jean-Marc Dr\'ezet for several helpful suggestions, especially concerning Lemma \ref{unique_extension}.


\begin{thebibliography}{99}

\bibitem{ballico_huh} E. Ballico, S. Huh.
\emph{Stable sheaves on a smooth quadric surface with linear Hilbert bipolynomials.}
Sci. World J. (2014), article ID 346126.

\bibitem{buchdahl} N. P. Buchdahl.
\emph{Stable 2-bundles on Hirzebruch surfaces.}
Math. Z. {\bf 194} (1987), 143--152.

\bibitem{choi_chung} J. Choi, K. Chung.
\emph{Moduli spaces of $\alpha$-stable pairs and wall-crossing on ${\mathbb P}^2$.}
J. Math. Soc. Japan {\bf 68} (2016), 685--709.

\bibitem{choi_katz_klemm} J. Choi, S. Katz, A. Klemm.
\emph{The refined BPS index from stable pair invariants.}
Commun. Math. Phys. {\bf 328} (2014), 903--954.

\bibitem{chung_moon} K. Chung, H. Moon.
\emph{Moduli of sheaves, Fourier-Mukai transform, and partial desingularization.}
arXiv:1410.8211

\bibitem{drezet_deformations} J.-M. Dr\'ezet.
\emph{D\'eformations des extensions larges de faisceaux.}
Pac. J. Math. {\bf 220} (2005), 201--297.

\bibitem{drezet_multiples} J.-M. Dr\'ezet.
\emph{Faisceaux coh\'erents sur les courbes multiples.}
Collect. Math. {\bf 57} (2006), 121--171.

\bibitem{goettsche} L. G\"ottsche.
\emph{The Betti numbers of the Hilbert scheme of points on a smooth projective surface.}
Math. Ann. {\bf 286} (1990), 193--207.

\bibitem{griffiths_harris} P. Griffiths, J. Harris.
\emph{Principles of algebraic geometry.}
John Wiley \& Sons, New York, 1994.

\bibitem{he} M. He.
\emph{Espaces de modules de syst\`emes coh\'erents.}
Int. J. Math. {\bf 9} (1998), 545--598.

\bibitem{huybrechts_lehn} D. Huybrechts, M. Lehn.
\emph{The geometry of moduli spaces of sheaves.}
Aspects of Mathematics E31, Vieweg, Braunschweig, 1997.

\bibitem{lepotier_asterisque} J. Le Potier.
\emph{Syst\`emes coh\'erents et structures de niveau.}
Ast\'erisque {\bf 214}, 1993.

\bibitem{lepotier} J. Le Potier.
\emph{Faisceaux semi-stables de dimension $1$ sur le plan projectif}.
Rev. Roumaine Math. Pures Appl. {\bf 38} (1993), 635--678.

\bibitem{pacific} M. Maican.
\emph{On two notions of semistability}.
Pac. J. Math. {\bf 234} (2008), 69--135.

\bibitem{rendiconti} M. Maican.
\emph{A duality result for moduli spaces of semistable sheaves supported on projective curves}.
Rend. Sem. Mat. Univ. Padova {\bf 123} (2010), 55--68.

\bibitem{illinois} M. Maican.
\emph{On the moduli spaces of semi-stable plane sheaves of dimension one and multiplicity five}.
Ill. J. Math. {\bf 55} (2011), 1467--1532.

\bibitem{osaka} M. Maican.
\emph{On two moduli spaces of sheaves supported on quadric surfaces}.
Osaka J. Math., to appear,
arXiv:1501.02556

\bibitem{pandharipande_thomas} R. Pandharipande, R. P. Thomas.
\emph{Stable pairs and BPS invariants.}
J. Amer. Math. Soc. {\bf 23} (2010), 267--297.

\bibitem{simpson} C.T. Simpson.
\emph{Moduli of representations of the fundamental group of a smooth projective variety I.}
Inst. Hautes \'Etudes Sci. Publ. Math. {\bf 79} (1994), 47--129.

\end{thebibliography}
\end{document}